\newtheorem{lemma}[equation]{Lemma}
\newtheorem{prop}[equation]{Proposition}
\newtheorem{thm}[equation]{Theorem}
\newtheorem{cor}[equation]{Corollary}
\newtheorem{defn}[equation]{Definition}
\theoremstyle{definition}
\newtheorem{exmp}[equation]{Example}
\newtheorem{rmk}[equation]{Remark}
\numberwithin{equation}{section}
\newcommand{\Z}{\mathbf{Z}}
\newcommand{\R}{\mathbf{R}}
\newcommand{\C}{\mathbf{C}}
\newcommand{\U}[1]{\mathrm{U}(#1)}
\newcommand{\VU}[1]{\mathrm{VU}(#1)}
\newcommand{\BU}[1]{\mathrm{BU}(#1)}
\newcommand{\map}[3]{\operatorname{map}(#2,#3)_{#1}}
\newcommand{\Map}[5]{\operatorname{map}(#2,#3;#4,#5)_{#1}}
\newcommand{\func}[3]{\mbox{$#1 \colon #2 \to #3$}}
\newcommand{\sk}[2]{\mbox{$\operatorname{sk}_{#1}(#2)$}}
\newcommand{\fla}[2]{\mbox{$\operatorname{fla}_{#1}(#2)$}}
\newcommand{\ch}[1]{\operatorname{chr}(#1)}
\newcommand{\chs}[2]{\operatorname{chr}^{#1}(#2)}
\newcommand{\DaJ}[1]{\operatorname{DJ}(#1)}
\newcommand{\SR}[1]{\operatorname{SR}(#1)}
\newcommand{\daj}{Davis--Januszkiewicz}
\newcommand{\codim}{\operatorname{codim}}
\newcommand{\colim}{\operatorname{colim}}
\newcommand{\m}{morphism}
\title{Vertex colorings of simplicial complexes}
\author{Natalia Dobrinskaya}
\address{Department of Mathematics
Vrije Universiteit\\
Faculty of Sciences\\
De Boelelaan 1081a\\
1081 HV Amsterdam\\
The Netherlands}
\email{NE.Dobrinskaya@few.vu.nl}
\urladdr{}
\author{Jesper M.~M\o ller}
\address{Matematisk Institut\\
  Universitetsparken 5\\
  DK--2100 K\o benhavn}
\email{moller@math.ku.dk}
\urladdr{htpp://www.math.ku.dk/~moller}
\author{Dietrich Notbohm}
\address{Department of Mathematics
Vrije Universiteit\\
Faculty of Sciences\\
De Boelelaan 1081a\\
1081 HV Amsterdam\\
The Netherlands}
\email{notbohm@few.vu.nl}
\urladdr{}
\begin{document}
\date{\today}
\maketitle
\tableofcontents

\section{Introduction}
\label{sec:intro}

Let $K$ be a finite abstract simplicial complex (ASC), $V$ the vertex
set of $K$, and $P$ a finite set of colors. We say that $K$ is
$(P,s)$-colorable if it is possible to paint $V$ with colors from the
palette $P$ in such a way that no simplex of $K$ contains more than
$s$ vertices of the same color. In other words, a $(P,s)$-coloring of
$K$ is a map \func fVP so that $|\sigma \cap f^{-1}p| \leq s$ for all
simplices $\sigma$ of $K$.  A $(P,1)$-coloring is usually called just
a coloring.  Section~\ref{sec:ideal} and Section~\ref{sec:gencol}
contain more detailed discussions of usual colorings and our
relaxed colorings.

Figure~\ref{fig:MBcol} shows a $2$-dimensional complex $\mathrm{MB}$ with
$5$ vertices, 
\begin{figure}[h]
  \centering
\begin{equation*}
 \xy 0;/r.20pc/:
 (0,0)*{\textcolor{red}{\blacksquare}}="A"+(-4,-5)*{1};
 (40,0)*{\textcolor{red}{\blacksquare}}="B"+(0,-5)*{2};
 (80,0)*{\textcolor{red}{\blacksquare}}="C"+(0,-5)*{3};
 (120,0)*{\textcolor{blue}{\blacksquare}}="D"+(0,-5)*{4};
 (20,20)*{\textcolor{blue}{\blacksquare}}="E"+(0,5)*{4};
 (60,20)*{\textcolor{blue}{\blacksquare}}="F"+(0,5)*{5};
 (100,20)*{\textcolor{red}{\blacksquare}}="G"+(0,5)*{1};
  "A"; "B" **\dir{-};
  "A"; "E" **\dir{-};
  "B"; "E" **\dir{-};
  "B"; "F" **\dir{-};
  "E"; "F" **\dir{-};
  "B"; "C" **\dir{-};
  "E"; "F" **\dir{-};
  "C"; "F" **\dir{-};
  "F"; "G" **\dir{-};
  "E"; "G" **\dir{-};
  "D"; "G" **\dir{-};
  "C"; "G" **\dir{-};
  "C"; "D" **\dir{-};
 \endxy
\end{equation*}
    \caption{A $(\{\textcolor{blue}{\blacksquare},
    \textcolor{red}{\blacksquare}\} ,2)$-coloring of the M\"obius band}
  \label{fig:MBcol}
\end{figure}
triangulating the M\"obius band. This complex is
$(\{\textcolor{blue}{\blacksquare},
\textcolor{red}{\blacksquare}\},2)$-colorable. One needs a palette $P$ of
$5$ colors, one for each vertex, for a $(P,1)$-coloring.

The purpose of this paper is to translate colorability problems from
combinatorics to ring and homotopy theory. 

Let $\SR{K;R}$ be the Stanley--Reisner face algebra of $K$ with
coefficients in the commutative ring $R$. Also, let $R[V]$ be the
graded polynomial $R$-algebra generated by the vertex set $V$ which we
consider to be homogeneous of degree $2$.  By definition
\eqref{defn:SR} there is a surjection $R[V] \twoheadrightarrow
\SR{K;R}$.  For any subset $U$ of the vertex set $V$, let $c(U)$
denote the symmetric polynomial $\prod_{u \in U}(1+u)$ in the
polynomial ring $R[V]$ or its image in $\SR{K;R}$ under the projection
$R[V] \twoheadrightarrow \SR{K;R}$.  Also, for any natural number $s
\geq 1$, write $c_{\leq s}(U)$ for $c(U)$ truncated above degree $2s$
so that $c_{\leq s}(U)$ is the the sum of the $s$ first elementary
symmetric polynomials in the variables $U$.

\begin{thm}\label{thm:Pscol}
  The map \func fVP is a $(P,s)$-coloring of $K$ if and only if
  $c(V)= \prod_{p \in P}c_{\leq 2s}(f^{-1}p)$ in $\SR{K;R}$. 
\end{thm}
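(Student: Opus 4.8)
The plan is to expand each side of the claimed identity as an explicit $R$-linear combination of squarefree monomials, first in the polynomial ring $R[V]$ and only afterwards in $\SR{K;R}$, and then to read off the condition for the two to coincide from the linear independence of the standard monomials of the Stanley--Reisner algebra. For a subset $W\subseteq V$ write $m_W=\prod_{w\in W}w$. Two facts are used throughout: the monomials $m_W$ with $W$ a face of $K$ belong to the standard monomial $R$-basis of $\SR{K;R}$, hence are $R$-linearly independent; and under the surjection $R[V]\twoheadrightarrow\SR{K;R}$ of \eqref{defn:SR} a squarefree $m_W$ maps to that basis element when $W\in K$ and to $0$ when $W\notin K$.

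First I would compute in $R[V]$. There $c(V)=\prod_{v\in V}(1+v)=\sum_{W\subseteq V}m_W$. Next, $c_{\le 2s}(f^{-1}p)$ is the truncation of $\prod_{v\in f^{-1}p}(1+v)$ retaining the terms of degree at most $2s$; since a subset of size $k$ contributes a monomial of degree $2k$, this is exactly $\sum_{W\subseteq f^{-1}p,\ |W|\le s}m_W$. Because $V=\bigsqcup_{p\in P}f^{-1}p$, multiplying these truncations over $p\in P$ and regrouping via the bijection that sends a tuple $(W_p)_{p}$ with $W_p\subseteq f^{-1}p$ and $|W_p|\le s$ to $W=\bigsqcup_p W_p$, with inverse $W\mapsto(W\cap f^{-1}p)_p$, yields
\[
\prod_{p\in P}c_{\le 2s}(f^{-1}p)=\sum_{\substack{W\subseteq V\\ |W\cap f^{-1}p|\le s\text{ for all }p\in P}}m_W
\]
in $R[V]$. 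I would be careful to establish this identity before passing to the quotient, since it is in $R[V]$, where there are no relations, that the regrouping is transparent. Applying $R[V]\twoheadrightarrow\SR{K;R}$ now gives $c(V)=\sum_{W\in K}m_W$ and $\prod_{p\in P}c_{\le 2s}(f^{-1}p)=\sum_{W\in K,\ |W\cap f^{-1}p|\le s\ \forall p}m_W$ in $\SR{K;R}$, the monomials indexed by non-faces having died.

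Finally I would compare. Subtracting the two expressions, $c(V)-\prod_{p\in P}c_{\le 2s}(f^{-1}p)$ equals $\sum m_W$ taken over the faces $W$ of $K$ for which $|W\cap f^{-1}p|>s$ for at least one colour $p$. By the linear independence of $\{m_W:W\in K\}$ this element is $0$ if and only if that index set is empty, i.e. if and only if every face of $K$ meets every colour class $f^{-1}p$ in at most $s$ vertices; by the definition recalled in Section~\ref{sec:intro} this is precisely the statement that $f$ is a $(P,s)$-coloring of $K$, and both implications follow simultaneously. I do not expect a genuine obstacle here: granting the standard monomial basis of the Stanley--Reisner algebra, the argument is bookkeeping of which monomials survive the degree truncation and which survive the quotient, the only subtleties being to run the monomial expansion of the product in $R[V]$ rather than in $\SR{K;R}$ and to line up the degree-$2s$ truncation with the bound $s$ on intersection sizes.
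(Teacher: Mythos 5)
Your argument is correct, but it takes a genuinely different route from the paper's. The paper factors the claim through two auxiliary lemmas: Lemma~\ref{lemma:sto1inRV}, which handles the special case $K=D[V]$ entirely inside the polynomial ring $R[V]$ (there $f$ is $s$-to-$1$ iff $c(V)=\prod_p c_{\leq s}(f^{-1}p)$), and Lemma~\ref{lemma:charcK}, which says that an element of $\SR{K;R}$ is determined by its restrictions $\SR{i_\sigma}$ to $R[\sigma]$ over all faces $\sigma\in K$. Chaining these, the paper reduces the identity in $\SR{K;R}$ to the statement that $f\circ i_\sigma$ is $s$-to-$1$ for every face $\sigma$, which is by definition the $(P,s)$-coloring condition. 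You instead expand both sides globally as $R$-linear combinations of squarefree monomials in $R[V]$, pass to the quotient, and use the $R$-linear independence of the surviving face monomials to read the coloring condition off the index set; this is more self-contained and bypasses Lemma~\ref{lemma:charcK} entirely, whereas the paper's structure is chosen so that Lemma~\ref{lemma:charcK} can be reused (for instance in Theorem~\ref{thm:RKcol}). Both proofs ultimately rest on the same description of $\SR{K;R}$ as the quotient of $R[V]$ by the Stanley--Reisner ideal (Proposition~\ref{prop:SRideal}). One small remark: the subscript $2s$ in the theorem statement is a notational slip for the paper's own $c_{\leq s}$ (truncation to total degree $\leq 2s$, i.e.\ the first $s$ elementary symmetric polynomials, as defined in the introduction and used in the paper's proof); your reading of it as the degree-$\leq 2s$ truncation matches the intended content.
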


The phrasing of our main theorem uses the \daj\ space $\DaJ{K}$ and
the integral Stanley--Reisner face algebra $\SR{K;R}$.  By
construction, there is an inclusion map $\DaJ{K}
\xrightarrow{\lambda_K} \map{}V{\BU 1}$ inducing the surjection of
$H^*(\map{}V{\BU 1};R) = R[V]$ onto $H^*(\DaJ{K};R) = \SR{K;R}$.
(Readers who are not familiar with these functorial constructions can
find more information in Section~\ref{sec:DJ} and
Section~\ref{sec:SR}.)  The element $c(V)= \prod_{v \in V}(1+v)$ is
the total Chern class in $\SR{K;R}$ of the stable complex vector
bundle $\DaJ K \xrightarrow{\lambda_K} \map{}V{\BU 1}
\xrightarrow{\oplus} \mathrm{BU}$ where \func{\oplus}{\map{}V{\BU
    1}}{\mathrm{BU}} is the Whitney sum map.




\begin{thm}\label{thm:main}
 The following conditions are equivalent when the ring $R$ is a UFD:
\begin{enumerate}
\item \label{thm:main1} $K$ admits a $(P,s)$-coloring.
\item \label{thm:main5} The Stanley--Reisner ring $\SR{K;R}$ contains
  $|P|$ elements $c_p$ of degree $\deg c_p \leq 2s$, $p \in P$, so
  that $\prod_{v \in V}(1+v) = \prod_{p \in P}c_p$.
\item \label{thm:main3} There exist $|P|$ $s$-dimensional complex
  vector bundles $\xi_p$, $p \in P$, over $\DaJ K$ so that $\lambda_K$
  and $\bigoplus_{p \in P} \xi_p$ are stably isomorphic.
\item \label{thm:main4} There exists a map $\DaJ{K} \to \map{}P{\BU
    s}$ such that the diagram
\begin{equation*}
  \xymatrix{
    {\DaJ{K}} \ar[dr]_{\oplus \circ \lambda_K} \ar[rr] &&  
    {\map{}P{\BU s}} \ar[dl]^\oplus \\
    & {\mathrm{BU}} }
\end{equation*}
commutes up to homotopy.
\end{enumerate}
\end{thm}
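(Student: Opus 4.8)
The plan is to establish the cycle $\eqref{thm:main1}\Rightarrow\eqref{thm:main4}\Rightarrow\eqref{thm:main3}\Rightarrow\eqref{thm:main5}\Rightarrow\eqref{thm:main1}$; the hypothesis that $R$ is a UFD will enter only in the final implication (indeed \eqref{thm:main1}, \eqref{thm:main3}, \eqref{thm:main4} do not mention $R$ and the plan shows them equivalent over any base), while \eqref{thm:main1}$\Rightarrow$\eqref{thm:main5} is also immediate from Theorem~\ref{thm:Pscol} by setting $c_p=c_{\leq 2s}(f^{-1}p)$.

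The steps $\eqref{thm:main4}\Rightarrow\eqref{thm:main3}\Rightarrow\eqref{thm:main5}$ are essentially unwinding definitions, granted the identification $H^*(\DaJ K;R)=\SR{K;R}$ with $c_1(L_v)=v$ for the canonical lines $L_v=\lambda_K^*L$ (Section~\ref{sec:DJ}, Section~\ref{sec:SR}). A map $\DaJ K\to\map{}{P}{\BU s}=\prod_{p\in P}\BU s$ is the same thing as a family $(\xi_p)_{p\in P}$ of $s$-dimensional bundles over $\DaJ K$; its composite with the Whitney sum map $\oplus$ classifies the stable class of $\bigoplus_p\xi_p$, whereas $\oplus\circ\lambda_K$ classifies the stable bundle $\bigoplus_{v\in V}L_v$, so homotopy-commutativity of the triangle is exactly the stable isomorphism asserted in \eqref{thm:main3}. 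Given \eqref{thm:main3}, applying the total Chern class (a stable, multiplicative invariant) converts this stable isomorphism into $\prod_{v\in V}(1+v)=\prod_v c(L_v)=\prod_p c(\xi_p)$ in $\SR{K;R}$, and $\deg c(\xi_p)\leq 2s$ since $\dim_\C\xi_p=s$; this is \eqref{thm:main5}.

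For $\eqref{thm:main1}\Rightarrow\eqref{thm:main4}$, fix a $(P,s)$-coloring $f$ and, for $p\in P$, let $K_p$ be the simplicial complex on vertex set $f^{-1}p$ with faces $\{\sigma\cap f^{-1}p:\sigma\in K\}$. The coloring condition says precisely that $\dim K_p\leq s-1$, and the projection $V\to f^{-1}p$ is a simplicial map $K\to K_p$, hence induces $\DaJ K\to\DaJ{K_p}$. Using the homotopy-colimit presentation $\DaJ L\simeq\hocolim_{\sigma\in L}\map{}{\sigma}{\BU 1}$ from Section~\ref{sec:DJ}, together with a functorial model for $\BU s$ and for direct sum, I would assemble over the faces of $L$ the maps $\map{}{\sigma}{\BU 1}\to\BU{|\sigma|}\to\BU s$ (Whitney sum of the $|\sigma|$ canonical lines, then pad by a trivial summand of rank $s-|\sigma|$); when $\dim L\leq s-1$ these are compatible and yield $\DaJ L\to\BU s$ whose composite with $\BU s\hookrightarrow\mathrm{BU}$ is $\oplus\circ\lambda_L$. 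Applying this to $L=K_p$, precomposing with $\DaJ K\to\DaJ{K_p}$, and taking the product over $p$ gives the desired map $\DaJ K\to\map{}{P}{\BU s}$: restricted to each $\map{}{\sigma}{\BU 1}$ its composite with $\oplus$ is $\bigoplus_{v\in\sigma}L_v$ up to a trivial summand, which coincides with $\oplus\circ\lambda_K$ there. I expect the delicate point of the whole proof to be exactly this assembly: the local Whitney-sum maps over the faces of $K_p$ agree only up to homotopy, so one must either work with a strictly functorial model of $\BU s$ and of $\oplus$ (legitimizing the homotopy colimit) or build the bundle cell-by-cell on $\DaJ{K_p}$ by obstruction theory, where $\dim K_p\leq s-1$ is what forces the obstructions to vanish.

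Finally, $\eqref{thm:main5}\Rightarrow\eqref{thm:main1}$. For $\sigma\in K$ write $R[\sigma]=\SR{\Delta^\sigma;R}$ for the polynomial ring on $\{v:v\in\sigma\}$ and let $\rho_\sigma\colon\SR{K;R}\twoheadrightarrow R[\sigma]$ be the restriction along $\Delta^\sigma\hookrightarrow K$; it is a well-defined graded surjection (no non-face of $K$ lies inside $\sigma$) and $\rho_\tau=\rho^\sigma_\tau\rho_\sigma$ for $\tau\subseteq\sigma$, where $\rho^\sigma_\tau$ kills the variables outside $\tau$. Applying $\rho_\sigma$ to $\prod_{v\in V}(1+v)=\prod_{p\in P}c_p$ gives $\prod_{v\in\sigma}(1+v)=\prod_p\rho_\sigma(c_p)$ in the UFD $R[\sigma]$. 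Since each $1+v$ is irreducible (primitive of $v$-degree one, by Gauss) and distinct $1+v$ are non-associate, and since units of $R[\sigma]$ are units of $R$, unique factorization forces $\rho_\sigma(c_p)=u^\sigma_p\prod_{v\in S^\sigma_p}(1+v)$ for a unique subset $S^\sigma_p\subseteq\sigma$ and a unit $u^\sigma_p\in R^\times$, with $\{S^\sigma_p\}_{p\in P}$ a partition of $\sigma$; and $2|S^\sigma_p|=\deg\rho_\sigma(c_p)\leq\deg c_p\leq 2s$, so $|S^\sigma_p|\leq s$. Because $\rho_\tau=\rho^\sigma_\tau\rho_\sigma$, uniqueness gives $S^\tau_p=S^\sigma_p\cap\tau$ whenever $\tau\subseteq\sigma$; specializing to $\tau=\{v\}$ shows that, for each vertex $v$, the color $p$ with $v\in S^\sigma_p$ does not depend on the face $\sigma\ni v$. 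Hence $f(v):=$ this common $p$ defines a map $\func fVP$ with $f^{-1}p\cap\sigma=S^\sigma_p$ for every $\sigma\in K$, so $|f^{-1}p\cap\sigma|=|S^\sigma_p|\leq s$, i.e.\ $f$ is a $(P,s)$-coloring.
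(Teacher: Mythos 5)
Your overall plan (a single cycle plus a direct check of $(1)\Rightarrow(5)$) is sound, and most of the implications are handled correctly and essentially as the paper does. In particular, your proof of $(5)\Rightarrow(1)$ coincides in substance with the paper's: restrict along $\SR{i_\sigma}$ to the polynomial rings $R[\sigma]$, use unique factorization in the UFD $R[\sigma]$ (the $1+v$ being irreducible by Gauss and pairwise non-associate, with $R[\sigma]^\times=R^\times$) to produce a partition $\{S^\sigma_p\}$ of each $\sigma$, observe compatibility under further restriction, and read off a vertex coloring with $|f^{-1}p\cap\sigma|=|S^\sigma_p|\leq s$. Your $(4)\Rightarrow(3)\Rightarrow(5)$ and the alternative $(1)\Rightarrow(5)$ via Theorem~\ref{thm:Pscol} are also fine.

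The gap is in $(1)\Rightarrow(4)$, and you have correctly sensed where it lies but not closed it. The local Whitney-sum maps $\map{}{\sigma}{\BU 1}\to\BU{|\sigma|}\to\BU s$ over the faces $\sigma$ of $K_p$ do \emph{not} strictly commute with the face inclusions, even after fixing a linear ordering of $V$: for $\sigma=\{1,3\}\subset\tau=\{1,2,3\}$ and $s=3$, one composite is $(x_1,x_3)\mapsto\mathrm{diag}(x_1,x_3,1)$ and the other is $(x_1,x_3)\mapsto\mathrm{diag}(x_1,1,x_3)$; these are conjugate, hence homotopic after applying $\mathrm{B}$, but not equal, so the colimit defining $\DaJ{K_p}$ does not receive a map to $\BU s$ on the nose, and the homotopy-colimit route requires higher coherences you have not supplied. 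The obstruction-theoretic alternative you mention is also not immediate: $\DaJ{K_p}$ is not a finite-dimensional complex (it contains products of copies of $\BU 1$), so $\dim K_p\leq s-1$ does not by itself force the obstructions to lifting $\DaJ{K_p}\to\mathrm{BU}$ through $\BU s$ to vanish. What the paper actually uses is Lemma~\ref{lemma:xi} (cited from \cite{DN09}) and Theorem~\ref{thm:fsplit}: a concrete, fibrewise-surjective vector bundle map from $\lambda_{K_p}$ onto the trivial bundle of rank $\codim K_p$, built from the line-bundle map of Lemma~\ref{lemma:VU1C} together with a generic linear map $A$ (a Vandermonde matrix), whose kernel $\xi_{K_p}$ is the desired rank-$d_f(p)$ subbundle. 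This is a geometric construction, not an assembly of classifying maps nor an obstruction calculation; you would need to reproduce or cite it to complete $(1)\Rightarrow(3)$ or $(1)\Rightarrow(4)$.

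A minor slip: ``the projection $V\to f^{-1}p$'' does not exist as a map, and $\sigma\mapsto\sigma\cap f^{-1}p$ is not a simplicial vertex map $K\to K_p$. The map $\DaJ K\to\DaJ{K_p}$ you want does exist, but it is obtained from the injective simplicial map $d(f)\colon K\to\bigstar_{p\in P}K_p$ into the join, giving $\DaJ K\to\prod_p\DaJ{K_p}$ and then projecting, exactly as in the proof of Theorem~\ref{thm:fsplit}.
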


Of course, only the cardinality of $P$ is relevant here so we shall
also say that $K$ is $(r,s)$-colorable if $K$ is $(P,s)$-colorable for
some palette $P$ of $r$ colors.

The proof of Theorem~\ref{thm:main} is in Section~\ref{sec:VBoverDJ}.
The $s=1$ version of Theorem~\ref{thm:main} appeared in \cite{DN08} by
the third author.

\subsection{Notation and the basic definition}
\label{sec:notation}
Our convention here is that any nonempty abstract simplicial complex
contains the empty set. As we are only interested in finite complexes
we shall be working with abstract simplicial complexes in the
following sense.

\begin{defn}[ASC]
  An abstract simplicial complex is a finite set of sets closed under
  formation of subsets.
\end{defn}

We shall use the following notation:
\begin{description}
\item[$|V|$] The number of elements in the finite set $V$
\item[{$D[V]$}] The ASC of {\em all\/} subsets of the finite set $V$
  ($|D[V]|=\Delta^{|V|-1}$) 
\item[{$\partial D[V]$}] The finite simplicial complex of all {\em proper}
  subsets of $V$ ($|\partial D[V]|=S^{|V|-2}$)
\item[$m_+$] For any integer $m \geq 0$, $m_+=\{0,1,\ldots,m\}$ denotes a
set of cardinality $m+1$.
\item[$m(K)$] The number of vertices $|V|$ in the ASC $K$ with vertex
  set $V$
\item[$n(K)$] The number $\max\{|\sigma| \mid \sigma \in K \}$ of
  vertices in a maximal facet of the ASC $K$
\item[$\dim K$] The dimension  $\dim K = n(K)-1$ of the ASC $K$
\item[$\codim K$] The codimension $\codim K = m(K)-n(K)$ of
  the ASC $K$ with vertex set $V$
\item[\sk jK] The $j$-skeleton $\{ \sigma \in K \mid \dim \sigma \leq j
  \}$ of the ASC $K$
  in $K$ of the simplex $\sigma
  \in K$
\item[$K_1 \ast K_2$] The join $K_1 \ast K_2 = \{\sigma_1 \amalg
  \sigma_2 \mid \sigma_1 \in K_1, \sigma_2 \in
  K_2\} \subset D[V_1 \amalg V_2]$ of ASCs $K_1$, $K_2$ with vertex
  sets $V_1$, $V_2$ 
\item[$P(K)$] The face poset of the ASC $K$ \cite[Definition~1.5]{BP02}
\item[$i_{\sigma}$] For any simplex $\sigma$ in the ASC $K$,
  $i_{\sigma} \colon \sigma \to V, D[\sigma] \to K, D[\sigma] \to
  D[V]$ are inclusion maps of sets or ASCs
\item[$S^k$] The subgroup of homogenous elements of degree $k$ in a
  $\Z$-graded ring $S=\oplus_{k \in \Z}S^k$
\item[$S^{\leq k}$] The subgroup $\oplus_{d \leq k}S^k$ of a
  $\Z$-graded ring $S=\oplus_{k \in \Z}S^k$ 
\item[$\deg(x)$] The minimal $k$ so that $x \in S^{\leq k}$ for an
  element $x$ in a $\Z$-graded ring $S=\oplus_{k \in \Z}S^k$
\end{description}

\section{\daj\ spaces}
\label{sec:DJ}

Let $V$ be a finite set and $(A,B)$ a pair of topological spaces. For
any subset $\sigma$ of $V$, write $\Map {}V{V-\sigma}AB$ for the space
of maps $(V,V-\sigma) \to (A,B)$.  When $\emptyset \subset \sigma
\subset \tau \subset V$, the inclusions $(V,V) \supset (V,V-\sigma)
\supset (V,V-\tau) \supset (V,\emptyset)$ induce inclusion maps
  \begin{equation*}
    \map{}VB = \Map{}VVAB \subset \Map{}V{V-\sigma}AB \subset
    \Map{}V{V-\tau}AB \subset \Map{}V{\emptyset}AB = \map{}VA 
  \end{equation*}
of mapping spaces. Thus we have a functor
\begin{equation}\label{eq:mapV--}
  \func {\Map {}V{V-?}AB} {P(D[V])}{\mathrm{TOP}}
\end{equation} 
from the poset of subsets of $V$ to the category of topological
spaces. There are natural transformations from the constant functor
$\map{}VB$ to this functor and from this functor to the constant
functor $\map{}VA$.

Let $K$ be an ASC with vertex set $V$.

\begin{defn} \cite[Construction~6.38]{BP02}\label{defn:DJ} 
  The \daj\ space $\DaJ{K;A,B}$ of $K$ with coefficients in $(A,B)$ is
  the colimit over the face poset $P(K)$
  \begin{equation*}
    \DaJ{K;A,B} = \colim (P(K); \Map {}V{V-?}AB) =
    \bigcup_{\sigma \in K} \Map {}V{V-\sigma}AB
  \end{equation*}
  of the functor \eqref{eq:mapV--}.
\end{defn}

The \daj\ space  $\DaJ{K;A,B}$ is born with maps 
$\map{}VB \xrightarrow{\varepsilon_K} \DaJ{K;A,B}
\xrightarrow{\lambda_K} \map{}VA$ induced by natural transformations.

In the special case where $K=D[V]$ is the full simplex, we see that
\begin{equation*}
  \DaJ {D[V];A,B} =
  \Map {}V{\emptyset}AB = \map {}VA
\end{equation*}
since the face poset $P(D[V])$ has $V$ as terminal object.

We shall just write $\DaJ{K;\BU 1,\{1\}} = \DaJ K$ in case
$(A,B)=(K(\Z,2),\{0\}) = (\BU 1,\mathrm{B}\{1\})$. This space is born
with maps $*=\map{}V* \xrightarrow{\varepsilon_K} \DaJ{K}
\xrightarrow{\lambda_K} \map{}V{\BU 1}$.




Suppose that \func fVP is a map of $V$ into some finite set $P$ and
that $(C,D)$ is a pair of topological spaces. Assume also that
$\mu=\{\mu_p \mid p \in P\}$ is a set of maps $\mu_p
\colon (\map{}{f^{-1}p}A,\map{}{f^{-1}p}B) \to (C,D)$ indexed by $P$.
Define 
\begin{equation}
  \label{eq:DJf}
  \func {\DaJ{f;\mu}}{\map{}VA}{\map{}PC}
\end{equation} 
to be the map given by
\begin{equation*}
  \DaJ{f;\mu}(\chi)(p) = \mu_p(\chi \vert f^{-1}p), \qquad 
  \chi \in \map{}VA,\  p \in P.
\end{equation*}
For instance, if $A$ is an abelian monoid with operation \func +{A
  \times A}A with neutral element $0$ and $B$ is submonoid, then we
may take \func{\mu_p} {(\map{}VA,\map{}VB)} {(A,B)} to be the map 
\begin{equation}
  \label{eq:lambdapA}
\mu_p(\chi) = \sum \chi(f^{-1}p)  
\end{equation} 
for all $\chi \in \map{}VA$.

\begin{prop}\label{prop:DJfunctor}
  $\DaJ {-;A,B}$ is a functor from the category of finite ASCs and
  injective maps to the category of spaces. If $(A,B)$ is a pair of
  abelian topological monoids then $\DaJ {-;A,B}$ is a functor from
  the category of finite ASCs to the category of spaces.
  \begin{enumerate}
  \item \label{prop:DJfunctor5} Any injective simplicial map \func
    fKL induces a map \func{\DaJ {f;A,B}}{\DaJ {K;A,B}} {\DaJ {L;A,B}}
  \item \label{prop:DJfunctor1} Any simplicial map \func fKL induces a
    map \func{\DaJ {f;A,B}}{\DaJ {K;A,B}} {\DaJ {L;A,B}} provided that
    $(A,B)$ is a pair of abelian topological monoids.
  \item \label{prop:DJfunctor4} Any simplicial map \func fKL induces a
    map \func{\DaJ {f;\mu}}{\DaJ {K;A,B}} {\DaJ {L;C,D}} provided
    that there are maps $\mu = \{\mu_p \mid p \in P\}$ indexed
    by the vertex set $P$ of $L$ as above.
\item $\DaJ{K_1 \ast K_2;A,B} = \DaJ{K_1;A,B} \times \DaJ{K_2;A,B}$
  \label{prop:DJfunctor3} 
  \end{enumerate}
\end{prop}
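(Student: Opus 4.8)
The plan is to trade the colimit of Definition~\ref{defn:DJ} for the concrete description, already recorded there, of $\DaJ{K;A,B}$ as the union $\bigcup_{\sigma\in K}\Map{}{V}{V-\sigma}{A}{B}$ inside $\map{}{V}{A}$, and then to argue everywhere by bookkeeping with support sets. For $\chi\in\map{}{V}{A}$ write $\operatorname{supp}(\chi)=\{v\in V:\chi(v)\notin B\}$ (relative to whichever subspace is under discussion); then $\chi\in\Map{}{V}{V-\sigma}{A}{B}$ exactly when $\operatorname{supp}(\chi)\subseteq\sigma$, so, $K$ being closed under passage to subsets, $\DaJ{K;A,B}=\{\chi\in\map{}{V}{A}:\operatorname{supp}(\chi)\in K\}$. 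I would prove \eqref{prop:DJfunctor4} first, obtain \eqref{prop:DJfunctor1} and \eqref{prop:DJfunctor5} as instances of it, and treat \eqref{prop:DJfunctor3} separately.

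For \eqref{prop:DJfunctor4}: the map $\DaJ{f;\mu}\colon\map{}{V}{A}\to\map{}{P}{C}$ of \eqref{eq:DJf} is continuous, each $\mu_p$ and each restriction along $f^{-1}p\hookrightarrow V$ being continuous, so the only point is that it carries $\DaJ{K;A,B}$ into $\DaJ{L;C,D}$. Let $\operatorname{supp}(\chi)=\sigma\in K$. If $p$ is a vertex of $L$ with $p\notin f(\sigma)$ then $f^{-1}p\cap\sigma=\emptyset$, so $\chi\vert f^{-1}p$ takes values in $B$ and hence $\mu_p(\chi\vert f^{-1}p)\in D$ by the defining property of $\mu_p$; therefore $\operatorname{supp}(\DaJ{f;\mu}(\chi))\subseteq f(\sigma)$, which belongs to $L$ because $f$ is simplicial, and so does its subset. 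Now \eqref{prop:DJfunctor1} is the instance with $(C,D)=(A,B)$, with $P$ the vertex set of $L$, and with $\mu_p(\psi)=\sum_{v\in f^{-1}p}\psi(v)$ as in \eqref{eq:lambdapA}: commutativity of $A$ makes this unordered sum meaningful, continuity of the monoid operation makes $\mu_p$ continuous, and $\mu_p$ carries $\map{}{f^{-1}p}{B}$ into $B$ because $B$ is a submonoid (the empty sum being $0\in B$). For the functoriality asserted in \eqref{prop:DJfunctor1}, $\DaJ{\mathrm{id}}=\mathrm{id}$ is immediate and $\DaJ{g\circ f}=\DaJ{g}\circ\DaJ{f}$ unwinds to $\sum_{w\in g^{-1}z}\sum_{v\in f^{-1}w}\psi(v)=\sum_{v\in(g\circ f)^{-1}z}\psi(v)$, valid by associativity and commutativity since the sets $f^{-1}w$ with $w\in g^{-1}z$ partition $(g\circ f)^{-1}z$.

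For \eqref{prop:DJfunctor5}, when no monoid structure is available, fix once and for all a point $*\in B$; there is nothing to choose when $B$ is a single point, as in the case $\DaJ{K}$, and $*=0$ works in the monoid case, so \eqref{prop:DJfunctor5} is subsumed by \eqref{prop:DJfunctor1}. This is the instance of \eqref{prop:DJfunctor4} with $(C,D)=(A,B)$, with $\mu_p$ the evident isomorphism $\map{}{f^{-1}p}{A}\cong A$ (restricting to $B$) when $|f^{-1}p|=1$ and the constant map onto $*$ when $f^{-1}p=\emptyset$; explicitly $\DaJ{f}(\chi)$ is the map on the vertex set of $L$ agreeing with $\chi\circ f^{-1}$ on $f(V)$ and equal to $*$ elsewhere, so $\operatorname{supp}(\DaJ{f}(\chi))=f(\operatorname{supp}(\chi))\in L$, and using the one fixed basepoint throughout makes this a functor on injective maps. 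For \eqref{prop:DJfunctor3}, identify $\map{}{V_1\amalg V_2}{A}=\map{}{V_1}{A}\times\map{}{V_2}{A}$ and write $\chi=(\chi_1,\chi_2)$; then $\operatorname{supp}(\chi)=\operatorname{supp}(\chi_1)\amalg\operatorname{supp}(\chi_2)$, and since $V_1\cap V_2=\emptyset$ a set of this shape is a simplex of $K_1\ast K_2$ precisely when $\operatorname{supp}(\chi_1)\in K_1$ and $\operatorname{supp}(\chi_2)\in K_2$; hence the homeomorphism restricts to $\DaJ{K_1\ast K_2;A,B}=\DaJ{K_1;A,B}\times\DaJ{K_2;A,B}$. (Alternatively, $P(K_1\ast K_2)\cong P(K_1)\times P(K_2)$, the defining functor is the external product of the two defining functors, and finite products commute with colimits in spaces.)

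I do not expect a genuine obstacle: the statement is essentially an unwinding of the colimit definition. The only place asking for care is \eqref{prop:DJfunctor5}, since there is no canonical covariant map $\map{}{V}{A}\to\map{}{V'}{A}$ along an inclusion $V\hookrightarrow V'$ without extra structure — one must either route it through a chosen basepoint of $B$ (and check that a single consistent choice yields a functor) or fall back to the monoid situation of \eqref{prop:DJfunctor1}. The remaining small points, that $A$ must be \emph{commutative} for the unordered sum in \eqref{eq:lambdapA} to be defined and that the colimit defining $\DaJ{K;A,B}$ is the union of subspaces of $\map{}{V}{A}$ as Definition~\ref{defn:DJ} records, are routine.
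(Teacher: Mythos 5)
Your proof is correct and rests on the same core idea as the paper's: unwind the colimit of Definition~\ref{defn:DJ} into the union $\bigcup_{\sigma\in K}\Map{}{V}{V-\sigma}{A}{B}$ and check that $\DaJ{f;\mu}$ carries each piece $\Map{}{V}{V-\sigma}{A}{B}$ into the piece $\Map{}{P}{P-\tau}{C}{D}$ with $\tau=f(\sigma)\in L$. The two presentations differ only in organization: the paper proves \eqref{prop:DJfunctor5} by citing a special $\mu$, proves \eqref{prop:DJfunctor1} directly, and declares \eqref{prop:DJfunctor4} a ``slight generalization,'' whereas you prove \eqref{prop:DJfunctor4} first and specialize; your support-set reformulation is a clean way to phrase the inclusion check, and you verify functoriality ($\DaJ{\mathrm{id}}=\mathrm{id}$, $\DaJ{g\circ f}=\DaJ{g}\circ\DaJ{f}$) explicitly, which the paper leaves implicit. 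One genuine improvement: for \eqref{prop:DJfunctor5} the paper only specifies $\mu_p$ when $p\in f(V)$, silently leaving $\mu_p$ undefined for $p\notin f(V)$; you correctly flag that one must fix a point $*\in B$ to define $\mu_p\colon\ast=\map{}{\emptyset}{A}\to(A,B)$ for such $p$, that this choice is forced (hence canonical) precisely in the cases the paper actually uses ($B$ a point, or $B$ a submonoid with $*=0$), and that fixing one such $*$ throughout is what makes $\DaJ{-;A,B}$ a functor on injections rather than merely a collection of maps. Minor wording quibble: you say ``\eqref{prop:DJfunctor5} is subsumed by \eqref{prop:DJfunctor1}'' but then (correctly) derive it from \eqref{prop:DJfunctor4}; in the absence of monoid structure it is only \eqref{prop:DJfunctor4} that applies. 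Your self-contained argument for \eqref{prop:DJfunctor3} is fine; the paper simply cites \cite[Construction~6.20]{BP02}.
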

\begin{proof}
  \noindent \eqref{prop:DJfunctor5} Suppose that $L$ is an ASC with
  vertex set $P$ and that the injective map \func fVP is simplicial
  map $K \to L$. When $p \in P$, $v \in V$, and $p=f(v)$, take
  $\mu_p \colon (\map{}{\{v\}}A,\map{}{\{v\}}B) = (A,B) \to (A,B)$
  to be the identity map. Let $\mu=(\mu_p)$. Proceed as in the
  proof of the next item.

  \noindent \eqref{prop:DJfunctor1} Suppose that $L$ is an ASC with
  vertex set $P$ and that \func fVP is simplicial map $K \to L$. Note
  that $\DaJ{f;\mu}$ \eqref{eq:DJf} takes the subspace
  $\Map{}V{V-\sigma}AB$ into the subspace $\Map{}V{V-\tau}AB$ when
  $\sigma \in K$, $\tau \in L$ are simplices so that $f(\sigma)
  \subset \tau$ and $\mu=(\mu_p)$ is as in
  \eqref{eq:lambdapA}. Thus $\DaJ{f;\mu}$ restricts to a map
   \begin{equation*}
     \xymatrix@C=40pt{
       {\DaJ{K;A,B}} \ar[r]^{\DaJ{f;\mu}} \ar@{^(->}[d] &
       {\DaJ{L;A,B}} \ar@{^(->}[d] \\
       {\DaJ{D[V];A,B}} \ar[r]_{\DaJ{f;\mu}} &  {\DaJ{D[P];A,B}} }
   \end{equation*}
between the \daj\ spaces.

\noindent \eqref{prop:DJfunctor4} A slight generalization of the above
proof. 


\noindent \eqref{prop:DJfunctor3} \cite[Construction~6.20]{BP02}.
\end{proof}

With fixed $K$, $\DaJ{K;A,B}$ is functorial in the pair $(A,B)$.
Observe that any homotopy equivalence $(A,B) \to (A',B')$ induces a
homotopy equivalence $\DaJ{K;A,B} \to \DaJ{K;A',B'}$ because
$\Map{}V{V-\sigma}AB \to \Map{}V{V-\sigma}{A'}{B'}$ is a homotopy
equivalence for all $\sigma \in K$ and because in
Definition~\ref{defn:DJ} we may replace the colimit by the homotopy
colimit \cite[Lemma 2.7]{NR05}.

\section{The Stanley--Reisner face algebra}
\label{sec:SR}

Let $R$ be a commutative ring and let
\func{\varepsilon}PQ be a homo\m\ between 
commutative graded
$R$-algebras $P$ and $Q$.  
(A typical example will be $P=H^*(X;R)$ and $Q=H^*(A;R)$ where $(X,A)$
is a pair of spaces.)
Let $V$ be a
finite set and $\sigma$ a subset of $V$. Write
$\otimes\Map{}V{V-\sigma}PQ$ for the tensor product $\bigotimes_{v \in
  V}A_v$ of $R$-algebras
\begin{equation*}
  A_v =
  \begin{cases}
    P & v \in \sigma \\
    Q & v \not\in \sigma
  \end{cases}
\end{equation*}
for $v$ in $V$.  When $\emptyset \subset \sigma \subset \tau \subset
V$ there are \m s of $R$-algebras
\begin{equation*}
  Q^{\otimes V} = \otimes\Map{}V{V}PQ 
  \leftarrow \otimes\Map{}V{V-\sigma}PQ 
  \leftarrow \otimes\Map{}V{V-\tau}PQ
  \leftarrow \otimes\Map{}V{\emptyset}PQ = P^{\otimes V}
\end{equation*}
induced by identity and augmentation homo\m s.  For instance, if
$V=\{1,2,3\}$ and $\{1\} = \sigma \subset \tau=\{1,3\}$, then there
are \m s
\begin{equation*}
  Q \otimes_R Q \otimes_R Q 
  \xleftarrow{\varepsilon \otimes \varepsilon \otimes \varepsilon}
  P \otimes_R Q \otimes_R Q 
  \xleftarrow{1 \otimes \varepsilon \otimes \varepsilon}
  P \otimes_R Q \otimes_R P 
  \xleftarrow{1 \otimes \varepsilon \otimes 1}
  P \otimes_R P \otimes_R P 
\end{equation*}
of $R$-algebras. Thus we have a functor
\begin{equation}
  \label{eq:otimesmap}
  \otimes\Map{}V{V-?}PQ \colon P(D[V])^{\mathrm{op}} \to \mathrm{ALG}
\end{equation}
from the opposite of the poset of subsets of $V$ to the category of
graded commutative connected $R$-algebras. There are natural
transformations from this functor to the constant functor $Q^{\otimes
  V}$ and from the constant functor $P^{\otimes V}$ to this functor.

Let $K$ be a finite ASC with vertex set $V$.

\begin{defn}\label{defn:SR} 
  The {\em Stanley--Reisner algebra of $K$ with coefficients in
    $(P,Q)$\/} is the limit over the opposite face poset
  $P(K)^{\mathrm{op}}$
  \begin{equation*}
    \SR{K;P,Q} = {\lim}(P(K)^{\mathrm{op}};\otimes\Map{}V{V-?}PQ)
  \end{equation*}
  of the functor \eqref{eq:otimesmap}.
\end{defn}

The Stanley--Reisner algebra $\SR{K;P,Q}$ is born with $R$-algebra
homo\m s $Q^{\otimes V} \leftarrow \SR{K;P,Q} \leftarrow P^{\otimes
  V}$ induced by natural transformations.

In the special case where $K=D[V]$ is the full simplex, we see that
\begin{equation*}
  \SR{D[V];P,Q} =
  \otimes\Map {}V{\emptyset}PQ = P^{\otimes V}
\end{equation*}
since the face poset $P(D[V])$ has $V$ as terminal object.

For any simplicial map \func fKL between two finite ASCs, let
\func{\SR{f}}{\SR{L;P,Q}}{\SR{K;P,Q}} be the $R$-algebra homo\m\
induced by the poset map \func{P(f)}{P(K)}{P(L)}. This makes
$\SR{-;P,Q}$ into a contravariant functor from the category of finite
ASCs into the category of graded commutative $R$-algebras.

\begin{prop}\cite[Theorem 3.10]{NR05}\label{prop:HDJR}
  $H^*(\DaJ{K;X,*};R) = \SR{K;H^*(X;R),H^*(*;R)}$ for any CW-complex
  $X$ with base point $*$.
\end{prop}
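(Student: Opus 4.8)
The plan is to realize $\DaJ{K;X,*}$ as a homotopy colimit over the face poset $P(K)$, run the spectral sequence computing the cohomology of such a homotopy colimit, and show it collapses onto the edge row, where it reproduces $\SR{K;H^*(X;R),H^*(*;R)}$ by Definition~\ref{defn:SR}. Concretely, I would first invoke the remark at the end of Section~\ref{sec:DJ} (the colimit in Definition~\ref{defn:DJ} may be replaced by a homotopy colimit, \cite{NR05}), so that $\DaJ{K;X,*}\simeq\hocolim_{P(K)}\Map{}V{V-?}X*$. For $\sigma\in K$ the space $\Map{}V{V-\sigma}X*$ is a product of $|\sigma|$ copies of $X$ — write it $X^\sigma$ — and the structure maps of the diagram are coordinate inclusions, so the Künneth theorem identifies the functor $\sigma\mapsto H^*(X^\sigma;R)$ on $P(K)^{\mathrm{op}}$ with the restriction to $P(K)^{\mathrm{op}}$ of the functor $\otimes\Map{}V{V-?}{H^*(X;R)}{H^*(*;R)}$ of \eqref{eq:otimesmap}: along each discarded tensor factor one picks up the augmentation $H^*(X;R)\to H^*(*;R)=R$. (To have Künneth in this form one should assume $H^*(X;R)$ is $R$-flat, which is automatic in all the cases we apply this to, e.g.\ $X=\BU 1$.) The spectral sequence for the cohomology of a homotopy colimit over the finite poset $P(K)$ then reads
\begin{equation*}
  E_2^{s,t}={\lim}^s_{P(K)^{\mathrm{op}}}\bigl(\sigma\mapsto H^t(X^\sigma;R)\bigr)\ \Longrightarrow\ H^{s+t}(\DaJ{K;X,*};R),
\end{equation*}
and it converges because $P(K)$ is finite and the coefficient functor is bounded below. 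Its $s=0$ row is exactly $\SR{K;H^*(X;R),H^*(*;R)}$ by Definition~\ref{defn:SR}, so the whole statement reduces to showing $E_2^{s,*}=0$ for all $s>0$.

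This vanishing of the higher derived limits is the crux, and the step I expect to need the most care. Splitting $H^*(X;R)=R\oplus\widetilde{H}^*(X;R)$ as $R$-modules gives a decomposition $H^*(X^\sigma;R)\cong\bigoplus_{\tau\subseteq\sigma}\widetilde{H}^*(X;R)^{\otimes\tau}$ that is compatible with the augmentation maps of the diagram, so the coefficient functor splits as $\bigoplus_\tau F_\tau$; here, for $\tau\in K$, $F_\tau$ is the constant functor with value $\widetilde{H}^*(X;R)^{\otimes\tau}$ on the full subposet $U_\tau=\{\sigma\in P(K):\tau\subseteq\sigma\}$ of faces containing $\tau$, extended by zero outside $U_\tau$, and $F_\tau=0$ when $\tau\notin K$. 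Now $U_\tau^{\mathrm{op}}$ is closed under precomposition inside $P(K)^{\mathrm{op}}$ (if $\tau\subseteq\sigma\subseteq\rho$ with $\rho\in K$ then $\tau\subseteq\rho$), so the cochain complex computing $\lim^*_{P(K)^{\mathrm{op}}}F_\tau$ lives entirely over $U_\tau^{\mathrm{op}}$ and coincides with the complex computing the cohomology $H^*(BU_\tau^{\mathrm{op}};\widetilde{H}^*(X;R)^{\otimes\tau})$ of the nerve with constant coefficients. But $U_\tau$ has $\tau$ as least element, hence $BU_\tau$ — and therefore $BU_\tau^{\mathrm{op}}\cong BU_\tau$ — is contractible; so $\lim^s_{P(K)^{\mathrm{op}}}F_\tau$ equals $\widetilde{H}^*(X;R)^{\otimes\tau}$ for $s=0$ and vanishes for $s>0$. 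Summing over the finitely many $\tau\in K$ gives $\lim^s_{P(K)^{\mathrm{op}}}=0$ for $s>0$, so $E_2=E_\infty$ is concentrated in the column $s=0$.

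It follows that $H^*(\DaJ{K;X,*};R)=\SR{K;H^*(X;R),H^*(*;R)}$ as graded $R$-modules; and since the spectral sequence is multiplicative and collapses into a single column, there is no extension ambiguity and the edge homomorphism upgrades this to an isomorphism of graded $R$-algebras, which finishes the proof. The two points that genuinely require attention are the naturality of the Künneth identification across the whole face diagram (so that the coefficient functor of the spectral sequence really is the Stanley–Reisner functor) and the sieve bookkeeping in the middle paragraph; the actual engine that kills the higher limits is simply that each $U_\tau$ has a least element, so its nerve is contractible.
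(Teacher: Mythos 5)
The paper offers no proof of this proposition: it is quoted from \cite[Theorem~3.10]{NR05} and invoked as a black box, so there is no in-paper argument to compare against. Your proof is correct and is, in substance, the argument behind the cited result: replace the colimit by a homotopy colimit, run the Bousfield--Kan spectral sequence $E_2^{s,t}=\lim^s_{P(K)^{\mathrm{op}}}H^t(X^{(-)};R)\Rightarrow H^{s+t}(\DaJ{K;X,*};R)$, and kill the higher derived limits by splitting the coefficient functor into constant functors $F_\tau$ supported on the sub-posets $U_\tau$ of faces containing a fixed $\tau\in K$, each of which has a least element and hence a contractible nerve. Two points deserve to be stated more explicitly. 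First, the step $\lim^s_{P(K)^{\mathrm{op}}}F_\tau=\lim^s_{U_\tau^{\mathrm{op}}}(\mathrm{const}\,\widetilde H^*(X;R)^{\otimes\tau})$ holds because $U_\tau$ is a sieve in $P(K)^{\mathrm{op}}$ (closed under incoming morphisms), so extension by zero from $U_\tau$ coincides with the right Kan extension along the inclusion; since that right adjoint preserves injectives, the derived limits over $P(K)^{\mathrm{op}}$ agree with those over $U_\tau^{\mathrm{op}}$. Your phrase ``closed under precomposition'' gestures at this, but the Kan-extension mechanism is what actually lets you shrink the indexing poset. Second, the natural K\"unneth identification $H^*(X^\sigma;R)\cong H^*(X;R)^{\otimes\sigma}$ across the face diagram requires $H^*(X;R)$ to be $R$-free; this hypothesis is implicit in the statement (and in \cite{NR05}) and certainly holds for the only case used in the paper, $X=\BU 1$ with $H^*(\BU 1;R)=R[t]$. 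With those two points in place the proof is complete, including multiplicativity, since a multiplicative spectral sequence collapsing to the column $s=0$ admits no extension problems.
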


The case where $X=\BU 1 = \C P^{\infty}$ is the classifying space for
complex line bundles will be especially relevant for this paper.  We
shall abbreviate $\DaJ{K;\BU 1,*}$ to $\DaJ K$ and $\SR{K;H^*(\BU
  1;R),H^*(*;R)}$ to $\SR{K;R}$. The inclusion \func{i_K}K{D[V]} of
$K$ into the full simplex on its vertex set induces an inclusion of
spaces
\begin{equation*}
\func{\DaJ{i_K}}{\DaJ K}{\DaJ{D[V]} = \BU 1^V}  
\end{equation*}
inducing the algebra \m\
\begin{equation*}
  \SR{K;R} = H^*(\DaJ K;R) \leftarrow H^*(\DaJ{D[V]};R) =
  H^*(\BU 1;R)^{\otimes V} = 
  R[V]
\end{equation*}
where $R[V]$ is the polynomial ring generated by $V$ in degree $2$.

For any subset $U$ of $V$, write $(U) \subset R[V]$ for the ideal
generated by the set $U \subset R[V]$ and $\prod U, \sum U \in R[V]$
for the elements $\prod_{u \in U}u$, $\sum_{u \in U}u$.

\begin{prop}\cite[(4.7)]{PRV04}\label{prop:SRideal}
  $\SR{K;R}$ is isomorphic to the quotient of $R[V]$ by the
  ideal $\left( \prod \tau \mid \tau \in D[V]- K
  \right)$.
\end{prop}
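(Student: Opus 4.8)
The plan is to identify both sides as quotients of the polynomial ring $R[V]$ and to match the kernels. First I would note that the inclusion $i_K \colon K \to D[V]$ induces, by functoriality of $\SR{-;R}$, the surjection $R[V] = \SR{D[V];R} \twoheadrightarrow \SR{K;R}$ recorded in Section~\ref{sec:SR}, so it suffices to compute its kernel. By Definition~\ref{defn:SR}, $\SR{K;R}$ is the limit over $P(K)^{\mathrm{op}}$ of the functor $\otimes\Map{}V{V-?}{R[V]}{R}$ (with $P = H^*(\BU1;R) = R[t]$ and $Q = R$), where for a simplex $\sigma \in K$ the term $\otimes\Map{}V{V-\sigma}{R[V]}{R}$ is the polynomial ring $R[\sigma]$ on the variables of $\sigma$, obtained from $R[V]$ by setting $v \mapsto 0$ for $v \notin \sigma$. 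Thus an element of $\SR{K;R}$ is a compatible family of polynomials, one in each $R[\sigma]$; the natural transformation from the constant functor $R[V]$ sends a polynomial $g$ to the family $(g \bmod (v : v \notin \sigma))_{\sigma \in K}$.

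Next I would compute the kernel of $R[V] \to \SR{K;R}$ directly: $g$ maps to zero iff $g$ lies in the ideal $(v : v \notin \sigma)$ for every $\sigma \in K$, i.e.\ iff $g \in \bigcap_{\sigma \in K}(V - \sigma)$. Since $R[V]$ is a polynomial ring, writing $g$ as a sum of monomials one checks that $g \in (V-\sigma)$ precisely when every monomial of $g$ has a variable outside $\sigma$; hence $g$ lies in the intersection iff every monomial of $g$ contains, for each face $\sigma \in K$, a variable outside $\sigma$ — equivalently, the support of each monomial of $g$ is \emph{not} contained in any face of $K$, i.e.\ is a non-face. Therefore $\ker = (\prod \tau : \tau \subset V,\ \tau \notin K)$, and it is enough to keep the \emph{minimal} non-faces, but in any case $\tau \in D[V] - K$ gives exactly the stated generating set. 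This yields $\SR{K;R} \cong R[V]/(\prod\tau : \tau \in D[V] - K)$, which is the classical Stanley--Reisner ring.

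The only real subtlety is the commutation of the limit with the forgetful step: a priori an element of the limit is a family $(g_\sigma)_{\sigma \in K}$ of polynomials compatible under the restriction maps $R[\tau] \to R[\sigma]$ for $\sigma \subset \tau$, and one must check that every such compatible family actually comes from a single polynomial in $R[V]$ (so that the map $R[V] \to \SR{K;R}$ is onto and the kernel computation above is the whole story). I expect this gluing to be the main point: it holds because the diagram $P(K)^{\mathrm{op}} \to \mathrm{ALG}$ is a diagram of quotients of the fixed ring $R[V]$ with \emph{surjective} transition maps, so the limit is again a quotient of $R[V]$ — concretely, fixing a maximal face $\sigma_0 \supseteq \sigma$ for each $\sigma$ and using compatibility shows $g_\sigma$ is determined by the top-dimensional data, which assemble (again by compatibility on intersections) to a polynomial in $R[V]$ restricting to the given family. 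Alternatively one can cite that $\SR{K;R} = H^*(\DaJ K; R)$ via Proposition~\ref{prop:HDJR} and invoke the known cohomology of Davis--Januszkiewicz spaces, but the direct limit computation above is self-contained and is the route I would write out.
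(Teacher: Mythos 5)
Your proof is correct and follows the same skeleton as the paper's: pass to the surjection $R[V]=\SR{D[V];R}\to\SR{K;R}$ induced by $i_K$, identify its kernel as $\bigcap_{\sigma\in K}(V-\sigma)$, and then identify that intersection with the Stanley--Reisner ideal $(\prod\tau : \tau\in D[V]-K)$. The difference is one of degree of detail: the paper asserts surjectivity with a "quickly seen" and outsources the intersection computation to \cite[Proposition 3.6]{NR05}, whereas you supply the monomial-support argument for $\bigcap_{\sigma\in K}(V-\sigma)=(\prod\tau:\tau\notin K)$ and spell out the gluing behind surjectivity. Both of those additions are worth having. One small caution on the surjectivity discussion: "surjective transition maps, hence the limit is a quotient of $R[V]$" is not a valid general principle, even for finite posets, so you should lean on the concrete argument you sketch right after. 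That argument is cleanest stated as: for each monomial $m$ with $\mathrm{supp}(m)\in K$, compatibility of the family $(g_\sigma)$ forces the coefficient of $m$ in $g_\sigma$ to agree for all $\sigma\supseteq\mathrm{supp}(m)$ (compare via $R[\sigma]\to R[\mathrm{supp}(m)]$), so one may define $g\in R[V]$ by assigning that common coefficient when $\mathrm{supp}(m)\in K$ and $0$ otherwise; this $g$ restricts to each $g_\sigma$ since the restriction $R[V]\to R[\sigma]$ kills exactly the monomials with support not in $\sigma$. With that tightening, your version is a self-contained replacement for the paper's citation-based proof.
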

\begin{proof}
  The homo\m\ $\SR{i_K} \colon R[V] \to \SR{K;R}$ is quickly seen to
  be surjective.  The kernel is the ideal $\bigcap_{\sigma \in K}
  \left(V-\sigma\right)$ which equals \cite[Proposition 3.6]{NR05} the
  ideal generated by the set $\{\prod \tau \mid \tau \in D[V]-K \}
  \subset R[V]$.
\end{proof}

The ideal $\left( \prod \tau \mid \tau \in D[V]- K \right)$ of
Proposition~\ref{prop:SRideal} is called the {\em Stanley--Reisner
  ideal}.



Define $c(V) \in R[V]$ by
\begin{equation}\label{def:LV} c(V) = \prod_{v \in
    V}(1+v)
\end{equation} 
For any map \func fVP, the induced map \func{R[f]}{R[P]}{R[V]} takes
$c(P)$ in $R[P]$ to 
\begin{equation}\label{eq:Rfc}
  R[f]c(P) = \prod_{p \in P}(1+\sum f^{-1}p)
\end{equation} 
in $R[V]$. 
\begin{lemma}\label{lemma:RfC}
$\text{\func fVP is injective} \iff R[f] c(P) = c(V)$  
\end{lemma}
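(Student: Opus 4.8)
The plan is to prove the two implications of Lemma~\ref{lemma:RfC} separately, using the explicit formula \eqref{eq:Rfc} for $R[f]c(P)$.

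For the forward direction, suppose \func fVP is injective. Then each fiber $f^{-1}p$ is either empty or a single vertex, so $\sum f^{-1}p$ is either $0$ or a single variable $v \in V$, and as $p$ ranges over $P$ the nonempty fibers biject with $V$. Hence the right-hand side of \eqref{eq:Rfc} becomes
\begin{equation*}
  R[f]c(P) = \prod_{\substack{p \in P \\ f^{-1}p \neq \emptyset}}(1+\sum f^{-1}p) \cdot \prod_{\substack{p \in P \\ f^{-1}p = \emptyset}} 1 = \prod_{v \in V}(1+v) = c(V),
\end{equation*}
which is exactly $c(V)$ by \eqref{def:LV}.

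For the contrapositive of the converse, suppose $f$ is not injective, so there exist distinct vertices $v_1, v_2 \in V$ with $f(v_1) = f(v_2) = p_0$. Then the factor of $R[f]c(P)$ indexed by $p_0$ is $1 + \sum f^{-1}p_0$, a linear polynomial in which both $v_1$ and $v_2$ appear with coefficient $1$. Since the other factors $1 + \sum f^{-1}p$ (for $p \neq p_0$) involve only variables in the complementary fibers $f^{-1}p$, none of which contains $v_1$ or $v_2$, the monomial $v_1 v_2$ cannot be produced by multiplying out $R[f]c(P)$: the only way to get a degree-$2$ term from the product is to take the degree-$2$ part of a single factor (which is $0$, as each factor is linear in its block of variables... but here I should be slightly more careful) or the product of two degree-$1$ terms from two distinct factors. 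Concretely, the degree-$2$ component of $R[f]c(P)$ is $\sum_{p \neq q} (\sum f^{-1}p)(\sum f^{-1}q)$, a sum of monomials $v v'$ with $f(v) \neq f(v')$; it therefore omits $v_1 v_2$. But the degree-$2$ component of $c(V) = \prod_{v\in V}(1+v)$ is the second elementary symmetric polynomial $\sum_{v < v'} v v'$, which does contain $v_1 v_2$. Hence $R[f]c(P) \neq c(V)$.

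I do not anticipate a serious obstacle here; the lemma is essentially a bookkeeping computation. The one point requiring a little care is the comparison of degree-$2$ components in the converse: one must argue cleanly that the monomial $v_1 v_2$ appears in $c(V)$ but not in $R[f]c(P)$, which hinges on the observation that in the expansion of a product of factors each involving a disjoint block of variables, any degree-$2$ monomial $vv'$ with $v \ne v'$ that survives must have $v$ and $v'$ lying in \emph{different} blocks (equivalently, $f(v) \neq f(v')$) — since within one block the factor $1 + \sum f^{-1}p$ has no degree-$2$ part. Working in the polynomial ring $R[V]$ (a free $R$-module on monomials) makes all these coefficient comparisons rigorous regardless of what the ring $R$ is.
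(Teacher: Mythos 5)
Your forward direction is the same as the paper's. For the converse, however, you take a genuinely different route. The paper argues directly: it observes $\deg c(V) = 2|V|$ while $\deg R[f]c(P) = 2|f(V)|$ (the top-degree part of $\prod_{p\in P}(1+\sum f^{-1}p)$ is $\prod_{p\in f(V)}\sum f^{-1}p$), so equality of the two elements forces $|V| = |f(V)|$ and hence injectivity. You instead argue by contrapositive, finding an explicit low-degree monomial $v_1v_2$ (with $f(v_1)=f(v_2)$) that appears with coefficient $1$ in $c(V)$ but coefficient $0$ in $R[f]c(P)$. Both arguments are correct and about equally short; the paper's degree comparison is slightly slicker in that it needs no case analysis of which monomials survive, whereas yours is more concrete and makes visible exactly where non-injectivity shows up. Your closing remark that the comparison is carried out in the free $R$-module $R[V]$, so that coefficient-matching is valid over any $R$, is a worthwhile observation that the paper leaves implicit (and the same caveat applies to the paper's degree argument).
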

\begin{proof}
  If \func fVP is injective, then \eqref{eq:Rfc} shows that $
  R[f] c(P) = c(V)$ because $f^{-1}(p)$ is nonempty for
  exactly $|V|$ points of $P$ and for these points $f^{-1}(p)$ is a
  single point of $V$. 

  Conversely, notice that $\deg c(V) =2|V|$ and $\deg R[f] c(P) =
  2|f(V)|$.  If $R[f] c(P) = c(V)$ then $|V|=|f(V)|$ and $f$ is
  injective.
\end{proof}



Define  
\begin{equation}\label{def:LK}
c(\lambda_K) = \SR{i_K}c(V)
\end{equation}
to be the image in $\SR{K;R}$ of $c(V)$ in $R[V]$. (Then $c(V) =
c(\lambda_{D[V]})$ in $R[V] = \SR{D[V];R}$.)

\begin{lemma}\label{lemma:charcK}
  Two elements of $\SR{K;R}$ are identical if and only if they have
  identical $\SR{i_{\sigma}}$-images in $\SR{D[\sigma];R} = R[\sigma]$
  for all $\sigma \in K$. In particular, for any $\sigma \in K$,
  $\SR{i_{\sigma}} c(\lambda_K) = c(\sigma)$ in $\SR{D[\sigma];R} =
  R[\sigma]$, and $c(\lambda_K)$ is the only element of $\SR{K;R}$
  with this property.
\end{lemma}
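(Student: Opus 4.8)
The plan is to read off both claims from the limit description of $\SR{K;R}$ in Definition~\ref{defn:SR}. Recall that $\SR{K;R} = \lim(P(K)^{\mathrm{op}};\otimes\Map{}V{V-?}{H^*(\BU 1;R)}{R})$, so an element of $\SR{K;R}$ is precisely a compatible family of elements in the algebras $\otimes\Map{}V{V-\sigma}{H^*(\BU 1;R)}{R}$ indexed by $\sigma \in K$; the compatibility is exactly with respect to the structure maps of the functor \eqref{eq:otimesmap}. The key observation is that $\otimes\Map{}V{V-\sigma}{H^*(\BU 1;R)}{R}$ has $H^*(\BU 1;R) = R[\{v\}]$ in the slots $v \in \sigma$ and $R = H^*(*;R)$ (killing the generator) in the slots $v \notin \sigma$, so this tensor algebra is just $R[\sigma]$, and the structure map from the $\sigma$-term to the $\tau$-term for $\tau \subset \sigma$ is the obvious restriction sending $v \mapsto v$ for $v \in \tau$ and $v \mapsto 0$ for $v \in \sigma - \tau$. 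Under the identification $\SR{D[\sigma];R} = R[\sigma]$ from the special case $K = D[\sigma]$ computed in Section~\ref{sec:SR}, the natural transformation realizing the projection to the $\sigma$-component is exactly $\SR{i_\sigma}$ where $i_\sigma \colon D[\sigma] \to K$ is the inclusion.

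First I would make this precise: for $\sigma \in K$ the map $\SR{i_\sigma} \colon \SR{K;R} \to \SR{D[\sigma];R} = R[\sigma]$ is the canonical projection from the limit onto its $\sigma$-indexed term (this is just unwinding that $P(i_\sigma) \colon P(D[\sigma]) \to P(K)$ hits the terminal object $\sigma$ of $P(D[\sigma])$). The first sentence of the lemma is then the defining universal property of a limit over a poset: an element of the limit is determined by its components, and two elements with equal components in all terms coincide. (One should note the limit is computed termwise in graded $R$-modules, so this is literally the statement that a cone into the diagram is the same as a compatible tuple.) This gives the "if and only if" in the first sentence immediately.

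Next I would compute $\SR{i_\sigma}c(\lambda_K)$. By \eqref{def:LK}, $c(\lambda_K) = \SR{i_K}c(V)$, and by functoriality $\SR{i_\sigma} \circ \SR{i_K} = \SR{i_K \circ i_\sigma}$ where $i_K \circ i_\sigma \colon D[\sigma] \to D[V]$ is the inclusion of the full simplex on $\sigma$ into the full simplex on $V$. The induced algebra map $R[V] = \SR{D[V];R} \to \SR{D[\sigma];R} = R[\sigma]$ sends $v \mapsto v$ for $v \in \sigma$ and $v \mapsto 0$ for $v \in V - \sigma$ (again from the poset-map description, or directly from the structure maps of \eqref{eq:otimesmap} with $K = D[V]$). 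Applying this to $c(V) = \prod_{v \in V}(1+v)$ kills every factor $(1+v)$ with $v \notin \sigma$, replacing it by $1$, and leaves $\prod_{v \in \sigma}(1+v) = c(\sigma)$. Finally, uniqueness: any element $x \in \SR{K;R}$ with $\SR{i_\sigma}x = c(\sigma)$ for all $\sigma \in K$ has the same components as $c(\lambda_K)$, hence equals $c(\lambda_K)$ by the first sentence — and one checks $c(\lambda_K)$ itself does satisfy this, so such an element exists and is unique.

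I do not anticipate a genuine obstacle here; the lemma is essentially a bookkeeping unwinding of the limit definition. The only point requiring a little care is matching the abstract structure maps of the functor \eqref{eq:otimesmap} with the concrete "set $v = 0$ for $v$ outside the simplex" description and confirming these agree with the maps $\SR{i_\sigma}$ as defined via $P(i_\sigma)$; this is where I would be most careful, but it follows from the construction of \eqref{eq:otimesmap} via augmentation homomorphisms on the tensor factors indexed by vertices not in the relevant simplex, together with the identification $\SR{D[U];R} = H^*(\BU 1;R)^{\otimes U} = R[U]$ recorded in the special-case computation in Section~\ref{sec:SR}.
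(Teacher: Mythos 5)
Your argument is correct and is essentially the paper's own proof: you compute $\SR{i_\sigma}c(\lambda_K)=c(\sigma)$ by contravariant functoriality of $\SR{-;R}$ applied to $i_K\circ i_\sigma$, and you deduce both the first sentence and the uniqueness claim from the observation that $\SR{K;R}$, being a limit over $P(K)^{\mathrm{op}}$, embeds in the product $\prod_{\sigma\in K}R[\sigma]$ via the maps $\SR{i_\sigma}$. The paper states this latter point in one line ("$\SR{K;R}$ is a subring of the product of the rings $R[\sigma]$") where you unwind the limit definition in more detail, but the content is the same.
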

\begin{proof}
  $\SR{i_{\sigma}} c(\lambda_K) = \SR{i_{\sigma}} \SR{i_K}c(V) =
  \SR{i_K \circ i_{\sigma}}c(V) = \SR{i_{\sigma}}c(V) = c(\sigma)$.
  Since $\SR{K;R}$ is a subring of the product of the rings
  $\SR{D[\sigma];R}=R[\sigma]$, $\sigma \in K$, this property
  characterizes $c(\lambda_K)$.
\end{proof}

 
The element $c(\lambda_K)$ of the graded ring $\SR{K;R}$ has degree
$\deg c(\lambda_K) = n(K)$.

\section{Splittings of vector bundles over \daj\ spaces}
\label{sec:unstablesplit}

Let $\VU 1 \to \BU 1$ denote the universal and $\BU 1 \times \C \to
\BU 1$ the trivial complex line bundle.

\begin{lemma}\label{lemma:VU1C}
  There exist a vector bundle map $ \VU 1 \xrightarrow a \BU 1 \times
  \C$ and a contractible subspace $\BU 1^\times$ of $\BU 1$ such that
  $a \mid \VU 1^\times$ trivializes the restriction $\VU 1^\times \to
  \BU 1^\times$ of the universal line bundle to $\BU 1^\times$.
\end{lemma}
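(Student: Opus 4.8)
The plan is to work with the standard model $\BU 1 = \C P^\infty$, realised as the space of complex lines through the origin in $\C^\infty = \colim_n \C^n$, with $\VU 1 = \{(\ell,v) \in \C P^\infty \times \C^\infty \mid v \in \ell\}$ the universal (tautological) line bundle, projecting onto the first factor. The point of the argument is that a single linear coordinate functional on $\C^\infty$ simultaneously produces the bundle map $a$ and cuts out the contractible subspace $\BU 1^\times$.

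First I would define $a \colon \VU 1 \to \BU 1 \times \C$ by $a(\ell,v) = (\ell, \operatorname{pr}_0(v))$, where $\operatorname{pr}_0 \colon \C^\infty \to \C$ is projection onto the first coordinate. This map covers the identity of $\BU 1$ and is $\C$-linear on every fibre $\ell$, so it is a vector bundle map; continuity follows from continuity of $\operatorname{pr}_0$ and of the inclusion $\VU 1 \hookrightarrow \C P^\infty \times \C^\infty$, bearing in mind that $\C^\infty$ carries the colimit topology. Note that $a$ is only a bundle map in the weak sense: it is the zero map on those fibres $\ell$ that lie in $\ker \operatorname{pr}_0$, which is exactly what the statement permits.

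Next I would set $\BU 1^\times = \{\ell \in \C P^\infty \mid \ell \not\subseteq \ker \operatorname{pr}_0\}$, i.e.\ the standard affine chart $\{[z_0 : z_1 : \cdots] \mid z_0 \neq 0\}$. It is homeomorphic to $\C^\infty$ via $[1 : w_1 : w_2 : \cdots] \mapsto (w_1, w_2, \ldots)$, and $\C^\infty$ is contractible (the straight-line homotopy $(t,v) \mapsto (1-t)v$ is continuous for the colimit topology), so $\BU 1^\times$ is contractible. For each $\ell \in \BU 1^\times$ the restriction $\operatorname{pr}_0 \mid \ell \colon \ell \to \C$ is an injective, hence bijective, linear map between one-dimensional spaces; therefore $a$ restricts over $\BU 1^\times$ to a fibrewise isomorphism $\VU 1^\times \xrightarrow{\ \sim\ } \BU 1^\times \times \C$, which is precisely a trivialisation of the restricted bundle $\VU 1^\times \to \BU 1^\times$.

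I do not expect any genuine obstacle: the only points needing care are (i) confirming that the notion of "vector bundle map" used here allows $a$ to degenerate on fibres over the complement of $\BU 1^\times$ — which the phrasing of the lemma explicitly anticipates — and (ii) the routine verification of continuity and contractibility for the colimit topology on $\C^\infty$. If a model-independent formulation is preferred, the same proof can be phrased by choosing a section of the dual of the universal line bundle that vanishes along a hyperplane $\C P^\infty \subset \C P^\infty$ and taking $\BU 1^\times$ to be the complement of that hyperplane.
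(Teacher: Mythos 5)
Your proof is correct, but it takes a genuinely different route from the paper's. You work directly in the concrete model $\BU 1 = \C P^\infty$ with the tautological line bundle, define $a$ by projecting the vector onto a fixed coordinate, and take $\BU 1^\times$ to be the corresponding affine chart $\{z_0 \neq 0\} \cong \C^\infty$. The paper instead stays entirely within the Borel-construction framework used throughout the text: it presents $\VU 1 \to \BU 1$ as the homotopy orbit construction applied to $\mathrm{pr}_1 \colon \C(1) \times \C(1) \to \C(1)$, defines the bundle map $a$ by applying the homotopy orbit functor to the $\U 1$-equivariant pairing $(x,y) \mapsto \bar x y \colon \C(1) \times \C(1) \to \C(0)$, and takes $\BU 1^\times = \C(1)^\times_{h\U 1}$ (contractible because $\C(1)^\times$ is a free $\U 1$-space deformation-retracting to $\U 1$ itself). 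The two constructions differ at the level of models: yours chooses a linear functional on $\C^\infty$, the paper's chooses a Hermitian pairing and a thickened model of $E\U 1$. Your version is more elementary and independent of the equivariant machinery; the paper's version fits more smoothly into the rest of Section~\ref{sec:unstablesplit}, where the proof of Lemma~\ref{lemma:xi} proceeds by applying this same equivariant construction coordinatewise over $\map{}V{\BU 1}$ and restricting to the \daj{} subspaces, and where the pair $(\BU 1, \BU 1^\times)$ must be compatible with the homotopy-orbit description of $\DaJ{K;\BU 1,\BU 1^\times}$. Both arguments also implicitly interpret ``vector bundle map'' in the weak sense permitting fibrewise non-injectivity, as you correctly note.
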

\begin{proof}
  Let $\C(1)$ and $\C(0)$ be $\C$ with the standard and the trivial $\U
  1$-actions, respectively.
  The projection
  $\mathrm{pr}_1 \colon \C(1) \times \C(1) \to \C(1)$ onto the first factor
  and \func a{\C(1) \times \C(1)}{\C(0)}
  given by $a(x,y)=\overline{x}y$, where
  $\overline{x}$ is the complex conjugate of $x$, are $\U
  1$-maps. They determine commutative diagrams
  \begin{equation*}
    \xymatrix{
      {\C(1) \times \C(1)} \ar[rr]^{\mathrm{pr}_1 \times a} 
      \ar[dr]_{\mathrm{pr}_1} &&
      {\C(1) \times \C(0)} \ar[dl]^{\mathrm{pr}_1} &
      {\VU 1} \ar[dr] \ar[rr]^-a &&
      {\BU 1 \times \C} \ar[dl]\\
      & {\C(1)} &&& {\BU 1} }
  \end{equation*}
  where the right diagram is obtained from the left diagram of $\U
  1$-spaces and $\U 1$-maps by applying the homotopy orbit space
  functor (Definition~\ref{defn:htpyorbit}). Here we use that the
  projection \func{\mathrm{pr}_1}{\C(1) \times \C(1)}{\C(1)} induces
  the universal line bundle $\VU 1 = (\C(1) \times \C(1))_{h\U 1} \to
  \C(1)_{h\U 1} = \BU 1$.  Let $\C(1)^\times$ be the $\U 1$-subspace
  of nonzero elements of $\C(1)$. Observe that the restriction of the
  vector bundle map $\VU 1 \to \BU 1 \times \C$ trivializes the vector
  bundle $(\C(1)^\times \times \C(1))_{h\U 1} \to \C(1)^\times_{h\U
    1}$ over the contractible subspace $\BU 1^\times =
  \C(1)^\times_{h\U 1}$ of $\BU 1 = \C(1)_{h\U 1}$.
\end{proof}

\begin{defn}\label{defn:htpyorbit}
  Let $G$ be a compact Lie group and $X$ a left $G$-space. The
  homotopy orbit space $X_{hG}$ is the space $(EG \times X)/G$ of
  $G$-orbits for the $G$-action $(e,x) \cdot g = (eg,g^{-1}x)$, $e \in
  EG$, $x \in X$, $g \in G$, on $EG \times X$ where $EG$ is a
  contractible free right $G$-space.
\end{defn}

By abuse of notation, we shall also let $\lambda_K$ stand for the
$|V|$-dimensional complex vector bundle 
\begin{equation}
  \label{eq:lambdaxi}
  \xymatrix@1{
    {\map{}V{\C}} \ar[r]^-{\varepsilon_K} &
    {\DaJ{K;\VU 1,\C}} \ar[r] & 
    {\DaJ{K; \BU 1, *}}}
\end{equation} 
over $\DaJ K = \DaJ{K;\BU 1,*}$ classified by the map $\DaJ{K; \BU 1,
  *} \xrightarrow{\lambda_K} \map{}V{\BU 1} \xrightarrow \oplus
\BU{|V|}$.

\begin{lemma}\label{lemma:xi}\cite[Theorem 1.1]{DN09}
  There exists a fibrewise surjective vector bundle map
  \begin{equation*}
    \xymatrix{
    {\DaJ{K;\VU 1,\C}} \ar[dr] \ar[rr] &&
    { \C^{\codim K} \times \DaJ K}
    \ar[dl] \\
    & {\DaJ K}}
  \end{equation*}
  of $\lambda_K$ to the trivial vector bundle of dimension $\codim K =
  m(K)-n(K)$ over $\DaJ K$.
\end{lemma}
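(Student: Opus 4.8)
The plan is to recast the assertion as the existence of a trivial summand of $\lambda_K$ of the correct rank, and then to assemble such a summand inductively over the facets of $K$, with Lemma~\ref{lemma:VU1C} supplying the glue.

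\emph{Reformulation.} A fibrewise surjective vector bundle map $\lambda_K \to \C^{\codim K}\times\DaJ K$ has kernel an $n(K)$-dimensional subbundle of $\lambda_K$; a Hermitian metric identifies the orthogonal complement of that kernel with the target, so $\lambda_K \cong \eta\oplus(\C^{\codim K}\times\DaJ K)$ for some rank $n(K)$ bundle $\eta$, and conversely any such splitting yields the map. Hence it is enough to produce a trivial subbundle of $\lambda_K$ of rank $\codim K$, equivalently $\codim K$ sections of the dual bundle of $\lambda_K$ that are linearly independent at every point of $\DaJ K$.

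\emph{The local model.} For $\sigma\in K$ the subspace $\DaJ{D[\sigma]}=\BU1^\sigma$ of $\DaJ K$ carries the restriction of $\lambda_K$ as $\bigoplus_{v\in\sigma}\ell_v \oplus (\C^{V\setminus\sigma}\times\BU1^\sigma)$, where $\ell_v$ is the universal line bundle pulled back from the $v$-th coordinate. Since $|V\setminus\sigma| = |V|-|\sigma|\ge |V|-n(K) = \codim K$, with equality exactly on the facets of maximal size $n(K)$, there is over each such piece an evident trivial summand of rank at least $\codim K$, of rank exactly $\codim K$ on the deepest pieces; the difficulty is that the "trivial coordinates'' shift as one passes between pieces, so these locally obvious summands must be organized into a single global one. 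I would run the induction through the pushout description of \daj\ spaces: ordering the facets $\sigma_1,\dots,\sigma_N$ and letting $K_i$ be the subcomplex generated by $\sigma_1,\dots,\sigma_i$, one has $\DaJ{K_i} = \DaJ{K_{i-1}}\cup_{\DaJ{K_{i-1}\cap D[\sigma_i]}}\DaJ{D[\sigma_i]}$, a pushout along a cofibration whose new piece is $\BU1^{\sigma_i}$.

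\emph{The inductive step, using Lemma~\ref{lemma:VU1C}.} Since $\DaJ{K;A,B}$ depends on the pair $(A,B)$ only up to homotopy equivalence, I may take the basepoint of $\BU1$ inside the contractible set $\BU1^\times$. Regarding the bundle map $a\colon \VU1\to\BU1\times\C$ of Lemma~\ref{lemma:VU1C} as a map of pairs $(\VU1,\C)\to(\BU1\times\C,\{*\}\times\C)$ covering $\mathrm{id}_{\BU1}$ and applying the functor $\DaJ{K;-,-}$ (functorial in the pair, as noted after Proposition~\ref{prop:DJfunctor}) produces a bundle map $\DaJ{K;a}\colon \lambda_K \to \C^V\times\DaJ K$ over all of $\DaJ K$ which is $a$ on each line summand $\ell_v$, hence a fibrewise isomorphism over the locus where every coordinate lies in $\BU1^\times$. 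To extend the surjection already built on $\DaJ{K_{i-1}}$ over the new piece $\BU1^{\sigma_i}$, one uses $\DaJ{K;a}$ to trivialize $\lambda_K$ over $(\BU1^\times)^{\sigma_i}$, uses the canonical rank $\codim K$ summand $\C^{V\setminus\sigma_i}$ near the deepest stratum of $\BU1^{\sigma_i}$, and exploits the contractibility of $(\BU1^\times)^{\sigma_i}$ to deform the coordinate projections $\C^V\to\C^{\codim K}$ involved into agreement with the already constructed surjection on the overlap. The main obstacle, and the technical heart of \cite[Theorem~1.1]{DN09}, is exactly this bookkeeping: fitting all the local trivial summands together into one global trivial summand of rank \emph{exactly} $\codim K$ rather than something larger, which is possible precisely because $|\sigma|\le n(K)$, hence $|V\setminus\sigma|\ge\codim K$, holds uniformly over $K$.
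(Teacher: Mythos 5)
Your setup is right and your use of Lemma~\ref{lemma:VU1C} is right: applying the pair-functoriality of $\DaJ{K;-,-}$ to $a\colon(\VU1,\C)\to(\BU1\times\C,\{\ast\}\times\C)$ gives a bundle map $\lambda_K\to\C^V\times\DaJ K$, and on each stratum $\Map{}V{V-\sigma}{\BU1}{\BU1^\times}$ its image contains the constant subbundle $\Map{}V{\sigma}\C0\cong\C^{V\setminus\sigma}$, of rank $|V|-|\sigma|\ge\codim K$. So far this matches the paper.

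The gap is that you never actually produce a single global surjection onto $\C^{\codim K}$. You propose to order the facets and glue, and you say yourself that ``fitting all the local trivial summands together into one global trivial summand'' is ``the main obstacle... the technical heart''; but the contractibility of $(\BU1^\times)^{\sigma_i}$ does not by itself let you ``deform the coordinate projections into agreement on the overlap'': the space of surjections $\C^V\to\C^{\codim K}$ is a (non-contractible) Stiefel-type space, and the overlap $\DaJ{K_{i-1}\cap D[\sigma_i];\BU1,\BU1^\times}$ is not contractible, so an obstruction could in principle survive. You have not ruled it out. The paper's proof dissolves this difficulty before it arises: instead of gluing, one chooses \emph{one} linear map $A\colon\map{}V{\C}\to\C^{\codim K}$ whose restriction to \emph{every} coordinate subspace $\Map{}V{\sigma}\C0$, $\sigma\in K$, is already surjective (possible by genericity since each such subspace has dimension $\ge\codim K$; the paper later exhibits a Vandermonde matrix doing the job). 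Postcomposing your $\DaJ{K;a}$ with $\mathrm{id}\times A$ then gives a map $\DaJ{K;\VU1,\VU1^\times}\to\DaJ{K;\BU1,\BU1^\times}\times\C^{\codim K}$ that is surjective on each stratum for free, and restricting along the homotopy equivalence $\DaJ{K;\BU1,\ast}\hookrightarrow\DaJ{K;\BU1,\BU1^\times}$ finishes the proof. In short: replace your facet-by-facet induction by a single general-position choice of $A$, and the bookkeeping disappears.
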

\begin{proof}
  Let $a \colon \VU 1 \to \BU 1 \times \C$ be the vector bundle map
  from Lemma~\ref{lemma:VU1C}. Choose a linear map \func
  A{\map{}V{\C}}{\C^{\codim K}} such that the restrictions of $A$ to
  the subspaces $\Map{}V{\sigma}{\C}0$, $\sigma \in K$, are surjective
  for all $\sigma \in K$. This is possible since $\dim
  \Map{}V{\sigma}{\C}0 = |V| - |\sigma| \geq |V| - \max\{|\sigma| \mid
  \sigma \in K\} = \codim K$. Then the vector bundle map
  \begin{equation*}
    {\map{}V{\VU 1}} \xrightarrow{\map{}Va}  
    {\map{}V{\BU 1} \times \map{}V{\C}} \xrightarrow{\mathrm{id} \times A} 
    {\map{}V{\BU 1} \times \C^{\codim K}} 
  \end{equation*}
  of vector bundles over $\map{}V{\BU 1}$ restricts to a surjective
  vector bundle map
  \begin{equation*}
   \DaJ{K;\VU 1,\VU 1^\times} \to 
   \DaJ{K;\BU 1,\BU 1^\times} \times \C^{\codim K}    
  \end{equation*}
  of vector bundles over $\DaJ{K;\BU 1,\BU 1^\times}$ because there
  are commutative diagrams
  \begin{equation*}
    \xymatrix{
      {\map{}{V,\sigma}{\C,0}} \ar@{=}[r] \ar@{^(->}[d] &
      {\map{}{V,\sigma}{\C,0}} \ar@{^(->}[dd]\ar[ddr]^\cong \\
      {\map{}V{\C}} \ar@{^(->}[d]  \\
      {\map{}{V,V-\sigma}{\VU 1,\VU 1^\times}} \ar[d] \ar[r]^-{a} &
      {\map{}V{\BU 1} \times \map{}V{\C}} \ar[r]^-{A \circ \mathrm{pr}_2} &
      {\C^{\codim K}}\\
      {\map{}{V,V-\sigma}{\BU 1,\BU 1^\times}} }
  \end{equation*}
  over $\map{}{V,V-\sigma}{\BU 1,\BU 1^\times} \subset \DaJ{K;\BU
    1,\BU 1^\times}$ for every simplex $\sigma \in K$.

  The inclusion map $\DaJ{K;\BU 1,*} \to \DaJ{K;\BU 1,\BU 1^\times}$
  is a homotopy equivalence covered by the vector bundle map
  $\DaJ{K;\VU 1,\C} \to \DaJ{K;\VU 1,\VU 1^\times}$.
\end{proof}

Let $\xi_K$ be the kernel of the vector bundle \m\ from
Lemma~\ref{lemma:xi} so that the vector bundles $\lambda_K$ and
$\xi_K$ are related by a short exact sequence
 \begin{equation}\label{eq:fconst} \xymatrix@1{ 0 \ar[r] & \xi_K
    \ar[r] & \lambda_K \ar[r] & \C^{\codim K} \ar[r] & 0}
\end{equation} 
of complex vector bundles over $\DaJ K$, and  $\dim \xi_K = \max\{
|\sigma| \mid \sigma \in K \} = n(K)$. 

Let \func fVP be any map. For each $p \in P$, define
\begin{equation*}
  d_f(p) = \max
\{ |\sigma \cap f^{-1}(p) | \mid \sigma \in K \}
\end{equation*}
to be the maximal number of vertices of color $p$ in any simplex of
$K$. Then
 \begin{equation}\label{eq:colorineq}
  n(K) \leq \sum_{p \in P} d_f(p) \leq m(K)
\end{equation} 
and \func fVP is a $(P,s)$-coloring if and only if $d_f(p) \leq
s$ for all colors $p \in P$.  For each color $p \in P$, consider the
subcomplex of $K$ consisting of all monochrome simplices of color $p$,
\begin{equation*}
  K_p = K \cap D[f^{-1}p] = \{ \sigma \in K \mid f(\sigma)=p \},
\end{equation*}
of dimension $\dim K_p = d_f(p)-1$ and codimension $\codim K_p =
|f^{-1}(p)|-d_f(p)$.  Since $V = \coprod_{p \in P}f^{-1}p$ is the
disjoint union of monochrome subsets there is an injective simplicial
map \func{d(f)}{K}{\bigstar_{p \in P}K_p} of $K$ into the join of the
subcomplexes $K_p$ inducing
(Proposition~\ref{prop:DJfunctor}.\eqref{prop:DJfunctor3}) a map
 \begin{equation}
  \label{eq:DJdf}
  \func{\DaJ{d(f)}}{\DaJ{K;A,B}}{\DaJ{\bigstar_{p \in P}K_p;A,B} = \prod_{p \in
      P}\DaJ{K_p;A,B}}
\end{equation} 
of $\DaJ K$ into the product of the $\DaJ {K_p}$, $p \in P$.  Write
$\xi_p$ and $\lambda_p$ for the vector bundles $\xi_{K_p}$ and
$\lambda_{K_p}$ over $\DaJ{K_p}$ and for their pull backs to $\DaJ K$.
We have $\dim \xi_p = d_f(p)$ and $\dim \lambda_p = |f^{-1}p|$.

 \begin{thm}\label{thm:fsplit}
   Associated to any map \func fVP, there is a short exact sequence
  \begin{equation*}
  \xymatrix@1{
    0 \ar[r] &
    {\bigoplus_{p \in P} \xi_p} \ar[r] &
    {\lambda_K} \ar[r] &
    {\C^{m(K)-\sum d_f(p)}} \ar[r] &
    0}
\end{equation*}
of vector bundles over $\DaJ K$ where $\dim \xi_p = d_f(p)$ and
$c(\xi_p)=c(f^{-1}p)$.
 \end{thm}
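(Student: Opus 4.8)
The plan is to build the short exact sequence of Theorem~\ref{thm:fsplit} directly out of the short exact sequences \eqref{eq:fconst} for the monochrome subcomplexes $K_p$, pulled back to $\DaJ K$ along the map $\DaJ{d(f)}$ of \eqref{eq:DJdf}. First I would observe that Proposition~\ref{prop:DJfunctor}.\eqref{prop:DJfunctor3} identifies $\DaJ{\bigstar_{p}K_p;\VU 1,\C}$ with the external Whitney sum $\bigoplus_{p\in P}\lambda_{K_p}$ over $\prod_p \DaJ{K_p}$; more precisely, one should check that under this join/product identification the bundle $\lambda_{\bigstar_p K_p}$ (of dimension $\sum_p|f^{-1}p| = |V|$) is isomorphic to $\bigoplus_{p\in P}\lambda_p$. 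Then, applying \eqref{eq:fconst} to each $K_p$ and taking the direct sum, we get a short exact sequence
\begin{equation*}
\xymatrix@1{
0 \ar[r] & {\bigoplus_{p}\xi_{K_p}} \ar[r] & {\bigoplus_{p}\lambda_{K_p}} \ar[r] & {\C^{\sum_p\codim K_p}} \ar[r] & 0}
\end{equation*}
over $\prod_p\DaJ{K_p}$, with $\sum_p\codim K_p = \sum_p(|f^{-1}p|-d_f(p)) = m(K)-\sum_p d_f(p)$; pulling this back along $\DaJ{d(f)}$ yields a short exact sequence over $\DaJ K$ with middle term $\DaJ{d(f)}^*\bigl(\bigoplus_p\lambda_{K_p}\bigr)$.

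Next I would identify that pulled-back middle term with $\lambda_K$. The key point is naturality of the tautological bundle construction \eqref{eq:lambdaxi}: the injective simplicial map $d(f)\colon K\to\bigstar_p K_p$ induces, by Proposition~\ref{prop:DJfunctor}, a map of $\DaJ$-spaces compatible with the coefficient pairs $(\VU 1,\C)$ and $(\BU 1,*)$, hence $\DaJ{d(f)}^*\lambda_{\bigstar_p K_p}\cong\lambda_K$ as vector bundles over $\DaJ K$. Combining with the product decomposition above gives $\DaJ{d(f)}^*\bigl(\bigoplus_p\lambda_{K_p}\bigr)\cong\lambda_K$, and the pulled-back $\xi_{K_p}$ are by definition the bundles $\xi_p$ named in the statement, of dimension $d_f(p)=\dim\xi_{K_p}$. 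This produces the asserted short exact sequence.

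Finally, for the Chern-class claim $c(\xi_p)=c(f^{-1}p)$ in $\SR{K;R}$, I would use Lemma~\ref{lemma:charcK}: it suffices to check equality after applying $\SR{i_\sigma}$ for every $\sigma\in K$, i.e. to compare the restrictions of $\xi_p$ and of the formal class $c(f^{-1}p)=\prod_{v\in f^{-1}p}(1+v)$ over each $\DaJ{D[\sigma]}=\BU 1^{\sigma}$. From \eqref{eq:fconst} for $K_p$ we have $c(\xi_{K_p})=c(\lambda_{K_p})$, and $c(\lambda_{K_p})$ restricts over each simplex $\tau\in K_p$ to $c(\tau)=\prod_{v\in\tau}(1+v)$ by Lemma~\ref{lemma:charcK}. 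Pulling back along $\DaJ{d(f)}$ and using that $d(f)$ sends $\sigma$ into the join-simplex $\coprod_p(\sigma\cap f^{-1}p)$, one gets $\SR{i_\sigma}c(\xi_p)=\prod_{v\in\sigma\cap f^{-1}p}(1+v)=\SR{i_\sigma}c(f^{-1}p)$; since this holds for all $\sigma\in K$, Lemma~\ref{lemma:charcK} forces $c(\xi_p)=c(f^{-1}p)$ in $\SR{K;R}$.

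The main obstacle I anticipate is the bookkeeping in the first step: making the identification $\DaJ{d(f)}^*\lambda_{\bigstar_p K_p}\cong\bigoplus_p\lambda_p$ genuinely natural, so that the summand-wise short exact sequences from \eqref{eq:fconst} assemble coherently rather than merely after a choice. This is where one must be careful that the maps $\mu_p$ of Proposition~\ref{prop:DJfunctor}.\eqref{prop:DJfunctor4} inducing $\DaJ{d(f)}$ are compatible with the vector-bundle structures on both the $\VU 1$ and the $\C$ coefficient levels; once that compatibility is in place, the rest is an application of the already-established Lemmas~\ref{lemma:xi}, \ref{lemma:charcK} and Proposition~\ref{prop:DJfunctor}.
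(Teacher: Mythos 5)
Your proof takes essentially the same route as the paper's: pull back the product of the short exact sequences \eqref{eq:fconst} for the monochrome subcomplexes $K_p$ along $\DaJ{d(f)}$, and identify the pulled-back middle term with $\lambda_K$ via the naturality square comparing the $(\VU 1,\C)$- and $(\BU 1,*)$-level \daj\ spaces. The paper records this with a single commutative pull-back diagram and leaves the Chern-class equality $c(\xi_p)=c(f^{-1}p)$ unremarked; you supply an explicit check of that equality using $c(\xi_{K_p})=c(\lambda_{K_p})$ (from \eqref{eq:fconst}), Lemma~\ref{lemma:charcK}, and the fact that $d(f)$ sends a simplex $\sigma$ to $\coprod_p(\sigma\cap f^{-1}p)$ — a welcome piece of bookkeeping that the paper tacitly relies on. Your proof is correct and matches the paper's argument.
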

 \begin{proof}
   If we pull back the short exact sequence
\begin{equation*}
  \xymatrix@1{
    0 \ar[r] &
    {\prod_{p \in P} \xi_p} \ar[r] &
    {\prod_{p \in P} \lambda_p} \ar[r] &
    {\prod_{p \in P} \C^{|f^{-1}p|-d_f(p)}} \ar[r] &
    0 }
\end{equation*}  
of vector bundles over $\prod_{p \in P}\DaJ{K_p}$ along the map 
\eqref{eq:DJdf}, we obtain a short exact sequence
\begin{equation*}
  \xymatrix@1{
    0 \ar[r] &
    {\bigoplus_{p \in P} \xi_p} \ar[r] &
    {\lambda_K} \ar[r] &
    {\C^{m(K)-\sum d_f(p)}} \ar[r] &
    0}
\end{equation*}
of vector bundles of $\DaJ K$. Here, we use that $\lambda_K =
\bigoplus_{p \in P} \lambda_p$ as shown by the pull-back
\begin{equation*}
  \xymatrix@C=45pt{
    {\DaJ{K;\VU 1,\C}} \ar[r]^-{\DaJ{d(f)}} \ar[d]_{\lambda_K} &
    {\prod_{p \in P}\DaJ{K_p;\VU 1,\C}} \ar[d]^{\prod \lambda_p} \\
    {\DaJ{K;\BU 1,*}} \ar[r]_-{\DaJ{d(f)}} &
    {\prod_{p \in P}\DaJ{K_p;\BU 1,*}}} 
\end{equation*}
of $|V| = \sum |f^{-1}p|$-dimensional vector bundles.
\end{proof}
 
The isomorphism class of the vector bundle $\xi_K$ is independent of
the choice of $\map{}V{\C} \xrightarrow A \C^{|V|}/\C^{1+\dim K}$
because any two choices are identical up to base changes. One
possibility is to take $A$ to be the linear map whose associated
$(\codim K \times |V|)$-matrix $A=(i^{j-1})$, $1 \leq i \leq \codim
K$, $1 \leq j \leq |V|$, is a Vandermonde matrix \cite[III, \S 8, no.\
6]{bourbaki98}. In the situation of Theorem~\ref{thm:fsplit} we have a
commutative diagram
\begin{equation*}
  \xymatrix@C=40pt{
    {\map{}{\bigcup_{p \in P}f^{-1}p}{\C}} \ar@{=}[d] \ar[r]^-A &
    {\C^{\sum |f^{-1}p|}/\C^{n(K)} = \C^{\codim K}} \ar@{->>}[d] \\
    {\prod_{p \in P} \map{}{f^{-1}p}{\C}} \ar[r]_-{\prod_{p \in P}A_p} &
    {\prod_{p \in P} \big(\C^{|f^{-1}p|}/\C^{n(K_p)}\big) = 
      \prod_{p \in P}\C^{\codim K_p}} }
\end{equation*}
where the right vertical epi\m\ is induced by an inclusion of
$\C^{n(K)}$ into $\prod_{p \in P} \C^{n(K_p)}$ which exists
because $n(K) \leq \sum_{p \in P}n(K_p)$ or, equivalently,
$\codim K \geq \sum_{p \in P} \codim K_p$.  The $(\codim K_p \times
|f^{-1}p|)$-matrix for $A_p$ is the submatrix of $A$ consisting of the
first $\codim K_p$ rows and the columns corresponding to the subset
$f^{-1}p$ of $V$. This is also a Vandermonde matrix so that the linear
maps $A_p$, $p \in P$, satisfy the condition from the proof of
Lemma~\ref{lemma:xi}. Therefore the above diagram induces another
commutative diagram
\begin{equation*}
  \xymatrix{
    0 \ar[r] &
    {\xi_K} \ar[r] \ar[d] &
    {\lambda_K} \ar@{=}[d] \ar[r] &
    {\C^{\codim K}} \ar[d] \ar[r] &
    0 \\
    0 \ar[r] &
    {\bigoplus_{p \in P} \xi_p} \ar[r] &
    {\bigoplus_{p \in P} \lambda_p} \ar[r] &
    {\bigoplus_{p \in P} \C^{\codim K_p}} \ar[r] &
    0 }
\end{equation*}
of vector bundle \m s from which we get the short exact sequence
\begin{equation}\label{eq:xisumpxi}
  \xymatrix@1{
    0 \ar[r] & 
    {\xi_K} \ar[r] &
    {\bigoplus_{p \in P} \xi_p} \ar[r] & 
    {\C^{\codim K - \sum \codim K_p}} \ar[r] &
    0 }
\end{equation}
of vector bundles over $\DaJ K$.

\section{Vertex colorings of simplicial complexes}
\label{sec:ideal}

Let $K$ be a finite ASC with vertex set $V$ and 
$R$ a commutative ring.


\begin{defn}
  Let $P$ be a finite set, a palette of colors.
  \begin{itemize}
  \item A $P$-coloring of $K$ is a map \func{f}VP that restricts to
    injective maps $f
    \vert \sigma \colon \sigma \to P$ 
    on all simplices $\sigma \in K$.
  \item $K$ is $r$-colorable if $K$ admits a coloring from a palette
    $P$ of $r$ colors.
  \item The chromatic number of $K$, $\ch K$, is the least $r$ so that
  $K$ is $r$-colorable.
  \end{itemize}
 \end{defn}

The identity map \func{1_V}VV is a $V$-coloring of $K$ painting the
vertices with distinct colors. 
The chromatic number of the full simplex $D[V]$ is $\ch {D[V]} = |V|$. If
$K'$ is a subcomplex of $K$ then $\ch {K} \geq \ch {K'}$. In particular,
$|\sigma| = \ch{D[\sigma]} \leq \ch K \leq \ch{D[V]} = |V|$ so that
\begin{equation*}
  n(K)\leq \ch K \leq m(K)   
\end{equation*}
for any ASC $K$ with vertex set $V$.

Let $\mathrm{Col}(K,P)$ be the set of $P$-colorings of $K$. Then 
\begin{equation*}
\mathrm{Col}(K,P) = {\lim}(P(K)^{\mathrm{op}};\mathrm{map}^1(-,P))  
\end{equation*}
is the limit of the set-valued functor $\mathrm{map}^1(-,P) \colon
P(K)^{\mathrm{op}} \to \mathrm{SET}$, taking $\sigma \in K$ to the set
$\mathrm{map}^1(\sigma,P)$ of injective maps $\sigma \to P$.  Since
$\mathrm{map}^1(\sigma,P) = \mathrm{Col}(D[\sigma],P)$, because the
face poset of the full simplex $D[\sigma]$ has $\sigma$ as a final
element, we could also write
\begin{equation*}
  \mathrm{Col}(K,P) = {\lim}(P(K)^{\mathrm{op}};\mathrm{Col}(D[-],P))
\end{equation*}
to emphasize that a global $P$-coloring of $K$ is a coherent choice of
local $P$-colorings of its simplices.

For any injective simplicial map \func fKL between two finite ASCs,
$\func{\mathrm{Col}^1(f,P)}{\mathrm{Col}(L,P)}{\mathrm{Col}(K,P)}$
is the map induced by the injective poset map \func{P(f)}{P(K)}{P(L)}.
Thus $\mathrm{Col}(-,P)$ is a contravariant set-valued functor on
the category of finite ASCs with injective maps.

\begin{defn}\cite[Definition~2.20]{BP02}\label{defn:1flag}
The {\em flagification\/}  of $K$ is the ASC
\begin{equation*}
  \fla {}K = \{ \sigma \in D[V] \mid \sk 1{D[\sigma]}
  \subset K \}
\end{equation*}
and $K$ is a flag complex if $K$ equals its flagification.
\end{defn}

We have $\sk 1K \subset K \subset \fla {}K \subset D[V]$ and $\sk 1K =
\sk 1{\fla {}K}$; $\fla {}K$ is the largest subcomplex of $D[V]$ with
the same $1$-skeleton as $K$.
  
The {\em missing faces\/} of $K$ are the minimal elements of the poset
$D[V]-K$ \cite[Definition~2.21]{BP02}; they generate the Stanley--Reisner
ideal.

The following two propositions emphasize that coloring issues are
$1$-dimensional.

\begin{prop}\cite[Proposition~2.22]{BP02}\label{prop:1flag}
  The following conditions are equivalent:
  \begin{enumerate}
  \item $K$ is flag
  \item $\forall \sigma \in D[V] \colon \sk 1{D[\sigma]} \subset K
    \Longrightarrow \sigma \in K$
  \item The missing faces of $K$ are $1$-dimensional
  \end{enumerate}
\end{prop}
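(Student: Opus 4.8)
The statement to prove is Proposition~\ref{prop:1flag}, characterizing flag complexes. The plan is to prove the cycle of implications $(1)\Rightarrow(2)\Rightarrow(3)\Rightarrow(1)$, all of which follow quickly from unwinding Definition~\ref{defn:1flag} and the remarks immediately preceding the statement.

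First I would record the two elementary observations already noted in the text: one always has $K \subseteq \fla{}K$, since every $\sigma \in K$ has $\sk 1{D[\sigma]} \subseteq K$ because $K$ is closed under taking subsets; and $\sk 1K = \sk 1{\fla{}K}$, since adding a face $\sigma$ with $\sk 1{D[\sigma]}\subseteq K$ introduces no new vertices or edges. With these in hand, $(1)\Leftrightarrow(2)$ is essentially a tautology: $K$ is flag means $K = \fla{}K$, i.e. $\fla{}K \subseteq K$ (the reverse inclusion being automatic), and $\fla{}K \subseteq K$ says precisely that every $\sigma \in D[V]$ with $\sk 1{D[\sigma]}\subseteq K$ lies in $K$, which is condition~(2).

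For $(2)\Rightarrow(3)$ I would argue contrapositively, or directly: suppose $\tau$ is a missing face of $K$, i.e. a minimal element of $D[V]-K$, and suppose $\dim\tau \geq 2$, so $|\tau|\geq 3$. Every proper subface of $\tau$ lies in $K$ by minimality of $\tau$; in particular every vertex and every edge of $D[\tau]$ lies in $K$, so $\sk 1{D[\tau]}\subseteq K$. Then condition~(2) forces $\tau \in K$, contradicting $\tau \in D[V]-K$. Hence every missing face has dimension $\leq 1$. (I should note that missing faces always have dimension $\geq 1$, since $\emptyset$ and all vertices of $V$ lie in $K$ when $K$ is nonempty with vertex set $V$; so ``$1$-dimensional'' here means dimension exactly $1$, and the argument shows dimension $\leq 1$, which combined with dimension $\geq 1$ gives the claim.)

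Finally, for $(3)\Rightarrow(2)$: assume all missing faces of $K$ are $1$-dimensional and let $\sigma \in D[V]$ satisfy $\sk 1{D[\sigma]}\subseteq K$; I must show $\sigma \in K$. If not, then $\sigma \in D[V]-K$, so $\sigma$ contains some missing face $\tau$ (a minimal element of $D[V]-K$ below $\sigma$, which exists since $D[V]-K$ is finite). By hypothesis $\dim\tau = 1$, so $\tau$ is an edge of $D[\sigma]$, hence $\tau \in \sk 1{D[\sigma]}\subseteq K$, contradicting $\tau \in D[V]-K$. Therefore $\sigma \in K$, establishing~(2) and closing the cycle. None of the steps presents a real obstacle; the only point requiring slight care is the bookkeeping around the convention that the empty set and all vertices belong to $K$, so that ``missing face'' indeed forces dimension between $1$ and $\dim K$, and the phrase ``$1$-dimensional'' is interpreted correctly throughout.
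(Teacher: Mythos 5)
The paper does not give its own proof of this proposition; it is stated with a citation to \cite[Proposition~2.22]{BP02} and no argument is supplied in the text. Your proof is therefore the only one on the table, and it is correct and complete: the equivalence $(1)\Leftrightarrow(2)$ is a direct unwinding of Definition~\ref{defn:1flag} together with the automatic inclusion $K\subseteq\fla{}K$; the implication $(2)\Rightarrow(3)$ uses minimality of a missing face to place its $1$-skeleton inside $K$ and then invokes $(2)$; and $(3)\Rightarrow(2)$ locates a missing face inside any hypothetical counterexample $\sigma$ and derives a contradiction from the hypothesis $\sk 1{D[\sigma]}\subseteq K$. Your side remark that missing faces have dimension at least $1$ is the right care to take: since $V$ is the vertex set of $K$, every singleton and the empty set lie in $K$, so $D[V]-K$ contains nothing of dimension below $1$, and "$1$-dimensional" in item $(3)$ does mean exactly dimension $1$. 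No gaps.
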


\begin{prop}\label{prop:1col}
  The following conditions are equivalent for any map \func fVP:
  \begin{enumerate}
  \item $f$ is a $P$-coloring of $K$
  \item $f$ is a $P$-coloring of $\sk 1K$
  \item $f$ is a $P$-coloring of $\fla {}K$
  \end{enumerate}
  Moreover, $\mathrm{Col}(K,P) = \mathrm{Col}(\sk 1K,P) =
  \mathrm{Col}(\fla {}K,P)$ and $\ch K = \ch{\sk 1K} =
  \ch{\fla {}K}$.  
\end{prop}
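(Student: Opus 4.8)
\textbf{Proof plan for Proposition~\ref{prop:1col}.}
The plan is to prove the chain of equivalences (1) $\Leftrightarrow$ (2) $\Leftrightarrow$ (3) and then deduce the equalities of $P$-coloring sets and of chromatic numbers as formal consequences. The observation that makes everything go is that being a $P$-coloring is a purely $1$-dimensional condition: by definition $f\colon V\to P$ is a $P$-coloring of an ASC $L$ iff $f$ restricts to an injection on every $\sigma\in L$, and injectivity of $f\vert\sigma$ is equivalent to $f(v)\neq f(w)$ for every pair $\{v,w\}\subset\sigma$, i.e.\ to $f$ being a $P$-coloring of every edge of $L$. So I would first record the elementary reformulation: $f$ is a $P$-coloring of $L$ iff $f(v)\neq f(w)$ for every edge $\{v,w\}\in L$.

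With that reformulation in hand, (1) $\Leftrightarrow$ (2) is immediate since $K$ and $\sk 1K$ have the same edges (by definition $\sk 1K=\{\sigma\in K\mid\dim\sigma\le 1\}$ contains exactly the vertices and edges of $K$). For (2) $\Leftrightarrow$ (3) I would invoke the relation $\sk 1K=\sk 1{\fla{}K}$ already noted in the text immediately after Definition~\ref{defn:1flag}: since $K$ and $\fla{}K$ have the same $1$-skeleton, they have the same edges, and hence the same $P$-colorings by the reformulation. (Alternatively, for the nontrivial direction one can argue directly from Definition~\ref{defn:1flag}: if $f$ is a $P$-coloring of $K$ and $\sigma\in\fla{}K$, then $\sk 1{D[\sigma]}\subset K$, so every edge of $\sigma$ lies in $K$, so $f$ is injective on each such edge, hence injective on $\sigma$.) The inclusions $\sk 1K\subset K\subset\fla{}K$ give the reverse implications for free, but they are not even needed once one has the edge reformulation.

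For the final sentence: the equality $\mathrm{Col}(K,P)=\mathrm{Col}(\sk 1K,P)=\mathrm{Col}(\fla{}K,P)$ is just the equivalence (1) $\Leftrightarrow$ (2) $\Leftrightarrow$ (3) read as an equality of subsets of $\map{}VP$ (all three sets are subsets of the set of all maps $V\to P$, and we have shown membership in one is equivalent to membership in another). Then $\ch K=\ch{\sk 1K}=\ch{\fla{}K}$ follows because the chromatic number of an ASC $L$ is the least $r$ for which there exists a palette $P$ of size $r$ with $\mathrm{Col}(L,P)\neq\emptyset$, and this quantity depends only on the set $\mathrm{Col}(L,-)$, which we have just shown is the same functor for $K$, $\sk 1K$, and $\fla{}K$.

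There is no real obstacle here; the only thing to be a little careful about is that $\sk 1 L$ records both vertices and edges (so that a map $V\to P$ is literally the same datum whether we think of it as defined on the vertex set of $K$, of $\sk 1 K$, or of $\fla{}K$ — all three have vertex set $V$), which is why the three $P$-coloring sets are honestly equal as sets rather than merely in bijection. The proof is entirely formal once the edge reformulation of "$P$-coloring" is isolated at the start.
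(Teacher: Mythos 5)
Your proposal is correct. The paper states Proposition~\ref{prop:1col} without giving an explicit proof (the surrounding text only remarks that ``coloring issues are $1$-dimensional''), and the argument you lay out is the intended one: reformulate ``$P$-coloring'' as a condition on edges, then use $\sk 1K = \sk 1{\fla{}K}$ (recorded just after Definition~\ref{defn:1flag}) together with the fact that $\sk 1K$, $K$, and $\fla{}K$ all share the same vertex set $V$, so the three coloring sets coincide as subsets of $\map{}VP$ and the chromatic numbers follow.
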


\begin{thm}\label{thm:RKcol}
  The map \func{f}VP is a $P$-coloring of $K$ if and only if $
  c(\lambda_K) = \SR f c(P)$ in $\SR{K;R}$.
\end{thm}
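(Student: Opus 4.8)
The plan is to reduce the global statement to a family of local statements, one for each simplex $\sigma\in K$, by exploiting the fact that $\SR{K;R}$ embeds in the product $\prod_{\sigma\in K}R[\sigma]$ (Lemma~\ref{lemma:charcK}). First I would recall that, by Proposition~\ref{prop:1col}, $f$ is a $P$-coloring of $K$ if and only if $f$ restricts to an injective map on every simplex $\sigma\in K$; equivalently, by Lemma~\ref{lemma:RfC}, if and only if $R[f\vert\sigma]\,c(P)=c(\sigma)$ in $R[\sigma]$ for all $\sigma\in K$. So the whole theorem comes down to showing that the single identity $c(\lambda_K)=\SR f c(P)$ in $\SR{K;R}$ is equivalent to the collection of identities $\SR{i_\sigma}\big(\SR f c(P)\big)=c(\sigma)$ for all $\sigma\in K$, together with the known fact (Lemma~\ref{lemma:charcK}) that $\SR{i_\sigma}c(\lambda_K)=c(\sigma)$.

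The key computation, then, is to identify $\SR{i_\sigma}\circ\SR f$ with the polynomial map $R[f\vert\sigma]$ on the relevant generators. Here $\SR f\colon\SR{D[P];R}=R[P]\to\SR{K;R}$ is the algebra map induced by the simplicial map $f\colon K\to D[P]$ (note $f$ is automatically a simplicial map into the full simplex $D[P]$), and it is characterized by $\SR f(p)=\sum f^{-1}p$ for each vertex $p\in P$, as one sees from the description of $\SR f$ on the tensor-product functor of Definition~\ref{defn:SR}. Composing with $\SR{i_\sigma}\colon\SR{K;R}\to R[\sigma]$, which is simply the quotient map killing all vertices outside $\sigma$, we get $\SR{i_\sigma}\SR f(p)=\sum(f^{-1}p\cap\sigma)$ in $R[\sigma]$. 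Therefore $\SR{i_\sigma}\SR f\,c(P)=\prod_{p\in P}\big(1+\sum(f^{-1}p\cap\sigma)\big)=R[f\vert\sigma]\,c(P)$, which is exactly the left-hand side appearing in Lemma~\ref{lemma:RfC} applied to the map $f\vert\sigma\colon\sigma\to P$.

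Assembling these: the identity $c(\lambda_K)=\SR f c(P)$ holds in $\SR{K;R}$ if and only if the two elements have equal $\SR{i_\sigma}$-images in $R[\sigma]$ for every $\sigma\in K$ (Lemma~\ref{lemma:charcK}), i.e.\ if and only if $c(\sigma)=R[f\vert\sigma]\,c(P)$ for every $\sigma\in K$, i.e.\ (Lemma~\ref{lemma:RfC}) if and only if $f\vert\sigma$ is injective for every $\sigma\in K$, i.e.\ if and only if $f$ is a $P$-coloring of $K$. I expect the main obstacle to be purely bookkeeping: pinning down precisely the formula $\SR f(p)=\sum f^{-1}p$ from the limit/tensor-product definition of $\SR{-;P,Q}$ and of the induced map $\SR f$, and checking compatibility of $\SR{i_\sigma}\circ\SR f$ with $\SR{i_\sigma\text{ in }D[P]}\circ$ (something) — in other words verifying that the square relating $f$, $i_\sigma$, and the inclusions of full simplices commutes at the level of Stanley–Reisner algebras. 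Once that naturality square is in hand, the rest is immediate from the three lemmas cited.
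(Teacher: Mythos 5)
Your proposal is correct and follows essentially the same route as the paper's proof: reduce to simplices via the embedding $\SR{K;R}\hookrightarrow\prod_{\sigma\in K}R[\sigma]$ (Lemma~\ref{lemma:charcK}), then apply Lemma~\ref{lemma:RfC} on each $R[\sigma]$, using the functoriality identity $\SR{i_\sigma}\circ\SR f=\SR{f\circ i_\sigma}=R[f\vert\sigma]$ (which the paper leaves implicit and you helpfully spell out). The only cosmetic difference is that you invoke Proposition~\ref{prop:1col} where the paper simply unwinds the definition of a $P$-coloring.
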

\begin{proof}
  We have
  \begin{align*}
    \text{\func fVP is a coloring} &\iff
    \forall \sigma \in K \colon 
    \text{\func{f \circ i_{\sigma}}{\sigma}P is injective} \\
    &\iff  \forall \sigma \in K \colon   c(\sigma) = \SR{f \circ
      i_{\sigma}}c(P) \\
    &\iff  \forall \sigma \in K \colon
    \SR{i_{\sigma}} \big( c(\lambda_K) \big) = 
    \SR{i_{\sigma}} \big(\SR{f}c(P) \big)  \\
    &\iff c(\lambda_K) = \SR{f}c(P) 
  \end{align*}
  from Lemma~\ref{lemma:RfC} and \ref{lemma:charcK}. (We here regard
  \func fVP as the simplicial map $K \stackrel{i_K}{\subset} D[V]
  \xrightarrow{D[f]} D[P]$.)
\end{proof}

In other words, a partition $V=V_1 \cup \cdots \cup V_r$ of $V$ into
  $r$ disjoint nonempty subsets is an $r$-coloring of $K$ if and only
  if the equation
  \begin{equation*}
   \prod_{v \in V}(1+v) = \prod_{1 \leq j \leq r}(1+\sum V_j)
  \end{equation*}
holds in the  Stanley--Reisner algebra $\SR{K;R}$.


  \begin{exmp}\label{exmp:C5}
    The cyclic simplicial graph $C_5$ on the $5$ vertices in
    $V=\{v_1,\ldots,v_5\}$ has Stanley--Reisner ring
    \begin{equation*}
      \SR{C_5;\Z} = \Z[v_1,\ldots,v_5]/(v_1v_3,v_1v_4,v_2v_4,v_2v_5,v_3v_5)
    \end{equation*}
    The chromatic number number $\ch {C_5}=3$ and there are $30$
    different $3$-colorings of $C_5$. Since
    $[v_1,v_2,v_3,v_4,v_5] \to [1,2,1,2,3]$ is a coloring, the
    identity
  \begin{equation*}
    \prod_{1 \leq i \leq 5}(1+v_i) = 
    (1+\textcolor{blue}{v_1+v_3})
    (1+\textcolor{red}{v_2+v_4})
    (1+\textcolor{green}{v_5})
  \end{equation*}
  holds in $\SR{C_5;\Z}$. $C_5$ is a flag complex.
\begin{equation*}
 \xy /l2.5pc/:
   {\xypolygon5"A"{}};
   "A3"*{\textcolor{blue}{\blacksquare}}; 
   "A4"*{\textcolor{red}{\blacksquare}}; 
   "A5"*{\textcolor{blue}{\blacksquare}};
   "A1"*{\textcolor{red}{\blacksquare}};
   "A2"*{\textcolor{green}{\blacksquare}}; 
   "A0"*{C_5};
\endxy
\end{equation*}
  \end{exmp}

  \begin{exmp}\label{exmp:41coldisc}
    A triangulation $K=\{\{1,2,4\},\{1,3,4\},\{1,2,3\}\} = D[0_+] *
    \partial D[2_+]$ of the $2$-disc with chromatic number
    $4$
\begin{equation*}
       \xy /l4.5pc/:
         {\xypolygon3"A"{~<{-}}};
          "A1"*{\textcolor{blue}{\blacksquare}}+(0,0.2)*{2}; 
          "A2"*{\textcolor{red}{\blacksquare}}+(-0.2,0)*{3}; 
          "A3"*{\textcolor{green}{\blacksquare}}+(0.2,0)*{4}; 
          "A0"*{\textcolor{yellow}{\blacksquare}}+(0,-0.2)*{1}; 
        \endxy
    \end{equation*}
    and Stanley--Reisner algebra $\SR{K;\Z} =
    \Z[v_1,v_2,v_3,v_4]/(v_2v_3v_4) = \Z[v_1] \otimes
    \Z[v_2,v_3,v_4]/(v_2v_3v_4)$.  
  \end{exmp}

  \section{Relaxed vertex colorings of simplicial complexes}
  \label{sec:gencol}

  We introduce vertex colorings that allow a bounded number of
  vertices in any simplex to have the same color.



\begin{defn}\label{defn:scol}
  Let $P$ be a finite set (a palette of colors) and $s$ a natural
  number where $1 \leq s \leq \dim(K)$.
  \begin{itemize}
  \item A $(P,s)$-coloring  of $K$  is a map \func{f}VP such that
    every simplex of $K$ contains at most $s$ vertices of the same
    color: $\forall \sigma \in K \forall p \in P \colon |\sigma \cap
    f^{-1}(p)| \leq s$.
  \item $K$ is $(r,s)$-colorable if $K$ admits a $(P,s)$-coloring
    from a palette $P$ of $r$ colors.
  \item The $s$-chromatic number of $K$, $\chs sK$, is the least $r$ so that
  $K$ is $(r,s)$-colorable.
  \end{itemize}
  \end{defn}
 
  In other words, the vertex coloring \func fVP is a $(P,s)$-coloring
  if $K$ has no monochrome $s$-dimensional simplices.
  A $(P,1)$-coloring of $K$ is a standard coloring of $K$, and $\ch
  K = \chs 1K$. Clearly,
  \begin{equation*}
    m(K) \geq \chs 1K  \geq \cdots \geq \chs sK \geq \chs
    {s+1}K \geq \cdots \geq \chs {n(K)}K = 1 
  \end{equation*}
  as any $(P,s)$-coloring is also a $(P,s+1)$-coloring.
  The map \func fVP is a $(P,s)$-coloring of $K$ if and
  only if $|f(\sigma)|>1$ for all $s$-dimensional simplices $\sigma
  \in K$, ie if and only if $K$ contains no monochrome $s$-dimensional
  simplices.  If \func {f_1}V{P_1} is a $(P_1,s_1)$-coloring and \func
  {f_2}{P_1}{P_2} is $s_2$-to-$1$ then $f_2 \circ f_1$ is an
  $s_1s_2$-coloring. Any finite ASC with vertex set $V$ is $(\lceil
  |V|/s \rceil,s)$- and $(r, \lceil |V|/r \rceil)$- colorable. The
  $s$-chromatic number of a full simplex is $\chs s{D[V]} = \lceil
  |V|/s \rceil$. (For a real number $t$, $\lceil t \rceil$ denotes the
  least integer $\geq t$.)


Let $\mathrm{Col}^s(K,P)$ be the set of $(P,s)$-colorings of $K$. Then 
\begin{equation*}
\mathrm{Col}^s(K,P) = {\lim}(P(K)^{\mathrm{op}};\mathrm{map}^s(-,P))  
\end{equation*}
is the limit of the set-valued functor $\mathrm{map}^s(-,P) \colon
P(K)^{\mathrm{op}} \to \mathrm{SET}$, taking $\sigma \in K$ to the set
$\mathrm{map}^s(\sigma,P)$ of $s$-to-$1$ maps $\sigma \to P$.  In
particular, $\mathrm{map}^s(\sigma,P) = \mathrm{Col}^s(D[\sigma],P)$
because the face poset of the full simplex $D[\sigma]$ has $\sigma$ as
a final element, and we could also write
\begin{equation*}
  \mathrm{Col}^s(K,P) = {\lim}(P(K)^{\mathrm{op}};\mathrm{Col}^s(D[-],P))
\end{equation*}
to emphasize that a global $(P,s)$-coloring of $K$ is a coherent choice of
local $(P,s)$-colorings of its simplices.

For any injective simplicial map \func fKL between two finite ASCs,
$\func{\mathrm{Col}^s(f,P)}{\mathrm{Col}^s(L,P)}{\mathrm{Col}^s(K,P)}$
is the map induced by the injective poset map \func{P(f)}{P(K)}{P(L)}.
Thus $\mathrm{Col}^s(-,P)$ is a contravariant set-valued functor on
the category of finite ASCs with injective maps.


In particular, $\mathrm{map}^s(\sigma,P) =
\mathrm{Col}^s(D[\sigma],P)$ because the face poset of the full
simplex $D[\sigma]$ has $\sigma$ as a final element, and we could also write
\begin{equation*}
  \mathrm{Col}^s(K,P) = {\lim}(P(K)^{\mathrm{op}};\mathrm{Col}^s(D[-],P))
\end{equation*}
to emphasize that a global $(P,s)$-coloring of $K$ is a coherent choice
of local $(P,s)$-colorings of its simplices.

If $K$ contains $K'$ as a subcomplex then there is a restriction map
$\mathrm{Col}^s(K,P) \to \mathrm{Col}^s(K',P)$ and $\chs sK \geq \chs
s{K'}$. In particular, $\lceil |\sigma|/s \rceil = \chs s{D[\sigma]}
\leq \chs sK \leq \chs s{D[V]} = \lceil |V|/s \rceil$ whenever $\sigma
\in K$ so that
  \begin{equation}\label{eq:chrs}
    \left\lceil \frac{n(K)}s \right\rceil 
    \leq \chs sK 
    \leq \left\lceil \frac{m(K)}s \right\rceil
  \end{equation} 
for any ASC $K$.

Since any $s$-to-$1$ map is an $(s+1)$-to-$1$ map there are inclusions
\begin{equation*}
   \mathrm{map}^1(\sigma,P) \subset \cdots \subset
 \mathrm{map}^s(\sigma,P) \subset 
\mathrm{map}^{s+1}(\sigma,P) \subset 
  \cdots \subset \mathrm{map}^{|\sigma|}(\sigma,P) =  \map {}{\sigma}P 
\end{equation*}
that produce inclusions
\begin{equation*}
   \mathrm{Col}^1(K,P) \subset \cdots \subset \mathrm{Col}^s(K,P)
   \subset \mathrm{Col}^{s+1}(K,P) \subset \cdots \subset 
   \mathrm{Col}^{\dim(K)+1}(K,P) = \map {}VP
\end{equation*}
giving $ \mathrm{Col}^*(K,P)$ the structure of a filtered set.

For any subset $U$ of the vertex set $V$, let $c_{\leq s}(U) \in R[V]$
denote the sum of the $s$ first elementary symmetric polynomials in
the elements of $U$. For instance,
\begin{equation*}
  c_{\leq 0}(U) = 1 =c_{\leq 0}(\emptyset) , \quad
  c_{\leq 1}(U) = 1+\sum_{u \in U}u, \quad
  c_{\leq 2}(U) = 1+\sum_{u \in U}u + \sum_{u_1 \neq u_2}u_1u_2,
  \ldots,
   c_{\leq |U|}(U) = \prod_{u \in U}(1+u)
\end{equation*}
Lemma~\ref{lemma:RfC} says that \func fVP is $1$-to-$1$ if and only if
$c(V) = \prod_{p \in P} c_{\leq 1}(f^{-1}p)$ in $R[V]$ and
Theorem~\ref{thm:RKcol} says that \func fVP is a $(P,1)$-coloring of
$K$ if and only if $c(\lambda_K)= \prod_{p \in P}c_{\leq 1}(f^{-1}p)$
in $\SR{K;R}$.  We can now obtain more general statements.

\begin{lemma}\label{lemma:sto1inRV}
  $\text{\func fVP is $s$-to-$1$} \iff c(V) = \prod_{p \in
    P} c_{\leq s}(f^{-1}p)$
\end{lemma}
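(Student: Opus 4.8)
The plan is to reduce the claim to a purely combinatorial identity about products of truncated Chern-type polynomials in $R[V]$, and then verify both directions by a degree/coefficient analysis. The key observation is that $c_{\leq s}(U)$ is exactly the part of $c(U) = \prod_{u\in U}(1+u)$ in degrees $\leq 2s$, so $\prod_{p\in P} c_{\leq s}(f^{-1}p)$ is a polynomial whose monomials are products $\prod_{p\in P} m_p$ with $m_p$ a squarefree monomial in the variables $f^{-1}p$ of degree at most $2s$. Since $V = \coprod_{p\in P} f^{-1}p$, such a product is a squarefree monomial $\prod_{v\in W} v$ for some $W\subseteq V$ with $|W\cap f^{-1}p|\leq s$ for all $p$, and every such $W$ arises exactly once. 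Hence I would first record the expansion
\begin{equation*}
  \prod_{p \in P} c_{\leq s}(f^{-1}p) \;=\; \sum_{W} \prod_{w \in W} w,
\end{equation*}
where $W$ ranges over all subsets of $V$ with $|W \cap f^{-1}p| \leq s$ for every $p \in P$.

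Next I would compare this with $c(V) = \sum_{W \subseteq V} \prod_{w\in W} w$, the sum over all subsets of $V$ with no constraint. Since $R[V]$ is a polynomial ring, the squarefree monomials $\prod_{w\in W} w$ for distinct $W$ are $R$-linearly independent, so the two sums are equal if and only if the index sets coincide, i.e.\ if and only if every subset $W\subseteq V$ satisfies $|W\cap f^{-1}p|\leq s$ for all $p$. For the ($\Leftarrow$) direction: if $f$ is $s$-to-$1$, meaning $|f^{-1}p|\leq s$ for every $p$, then a fortiori $|W\cap f^{-1}p|\leq|f^{-1}p|\leq s$ for any $W$, so the index sets agree and the identity holds. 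For the ($\Rightarrow$) direction: if the identity holds, then taking $W = V$ shows $|f^{-1}p| = |V\cap f^{-1}p|\leq s$ for every $p$, which is exactly the statement that $f$ is $s$-to-$1$; alternatively one can compare top-degree terms, noting $\deg c(V) = 2|V|$ while the degree of $\prod_p c_{\leq s}(f^{-1}p)$ is $2\sum_p \min(s,|f^{-1}p|)$, and equality forces $\min(s,|f^{-1}p|) = |f^{-1}p|$ for all $p$.

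The main obstacle, such as it is, is purely bookkeeping: making the expansion of the product $\prod_{p\in P} c_{\leq s}(f^{-1}p)$ precise and confirming that each admissible squarefree monomial appears with coefficient exactly $1$ (no cancellation, no repetition). This is immediate from the disjointness $V = \coprod_p f^{-1}p$, which guarantees that a monomial in the full product factors uniquely through the fibres. No deeper input is needed; in particular this lemma lives entirely in $R[V]$ and does not yet involve $\SR{K;R}$ or the \daj\ space — it is the polynomial-ring prototype that Theorem~\ref{thm:Pscol} will later promote to the Stanley--Reisner algebra by the same argument used to deduce Theorem~\ref{thm:RKcol} from Lemma~\ref{lemma:RfC} via Lemma~\ref{lemma:charcK}.
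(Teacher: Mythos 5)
Your proof is correct and takes essentially the same route as the paper: both sides expand into sums of squarefree monomials indexed by subsets of $V$, and the two directions are read off from this expansion. The paper's converse direction points to the single monomial $\prod_{v\in f^{-1}p}v$ of degree $2t>2s$ as a witness, which is a special case of your more systematic observation that the index sets must coincide; your forward direction and the paper's (observing $c_{\leq s}(f^{-1}p)=c(f^{-1}p)$ when the fibres are small) are also equivalent.
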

\begin{proof}
  If \func fVP is $s$-to-$1$ then $c_{\leq s}(f^{-1}p)$ is the total
  Chern class of the set $f^{-1}p$ so that $c_{\leq s}(f^{-1}p) =
  \prod_{f(v)=p}(1+v)$. Therefore, $c(V) = \prod_{v \in
    V}(1+v) = \prod_{p \in P}\prod_{f(v)=p}(1+v) = \prod_{p \in
    P}c_{\leq s}(f^{-1}p)$. 

  If $f^{-1}p$ contains $t>s$ elements for some $p \in P$, then the
  product of these $t$ elements with the same color $p$ is not a
  summand in $c_{\leq s}(f^{-1}p)$ but it is a summand in $c(V)$. Thus
  $c(V)$ and $\prod_{p \in P} c_{\leq s}(f^{-1}p)$ do not have the
  same homogenous components of degree $2t$.
\end{proof}

\begin{proof}[Proof of Theorem~\ref{thm:Pscol}]
  We have
  \begin{align*}
    \text{\func fVP is a $(P,s)$-coloring} &\iff
    \text{$\forall \sigma \in K \colon f \circ i_{\sigma}$ is
      $s$-to-$1$} \\ &\iff
    \forall \sigma \in K \colon c({\sigma}) = 
    \sum_{p \in P} c_{\leq s}((f \circ i_{\sigma})^{-1}p) \\&\iff
    \forall \sigma \in K \colon \SR{i_{\sigma}}\big( c(V) \big) =
    \SR{i_{\sigma}}\big(\sum_{p \in P} c_{\leq s}(f^{-1}p) \big) \\&\iff
    c(\lambda_K) = \sum_{p \in P} c_{\leq s}(f^{-1}p)
  \end{align*}
  from Lemma~\ref{lemma:sto1inRV} and \ref{lemma:charcK}.
\end{proof}

In other words, a partition $V=V_1 \cup \cdots \cup V_r$ of $V$ into
$r$ disjoint nonempty subsets is an $(r,s)$-coloring of $K$ if and
only if the equation
  \begin{equation*}
    c(V) = \prod_{1 \leq j \leq r}c_{\leq s}(V_j)
  \end{equation*}
holds in the  Stanley--Reisner algebra $\SR{K;R}$.

\begin{exmp}\label{exmp:P2}[Surfaces of genus one] 
  As indicated in Figure~\ref{fig:P2col}, there is a triangulation
  $\mathrm{P2}$ of $\R P^2$ with vertex set $V=\{v_1,\ldots,v_6\}$ and
  $f$-vector $( 1, 6, 15, 10 )$.  The chromatic numbers $\chs
  1{\mathrm{P2}} = 6$ and $\chs 2{\mathrm{P2}} = 3$. $\mathrm{P2}$ has
  $6!=720$ $(6,1)$-colorings and $45 \cdot 3! = 270$
  $(3,2)$-colorings.  Since $[1,2,3,4,5,6] \to [1,1,1,2,2,3]$ is a
  $(3,2)$-coloring, the identity
  \begin{equation*}
    \prod_{1 \leq i \leq 6}(1+v_i) =
    (1+\textcolor{red}{v_1+v_2+v_3+v_2v_3+v_1v_3+v_1v_2})
    (1+\textcolor{blue}{v_4+v_5+v_4v_5})(1+\textcolor{green}{v_6}) 
  \end{equation*}
  holds in the Stanley--Reisner ring
  \begin{equation*}
    \SR{\mathrm{P2};\Z} =
    \Z[v_1,\ldots,v_6]/(v_1v_2v_3, v_1v_2v_5, v_1v_3v_6, v_1v_4v_5,
    v_1v_4v_6, v_2v_3v_4, v_2v_4v_6, v_2v_5v_6, v_3v_4v_5, v_3v_5v_6) 
  \end{equation*}
  of $\mathrm{P2}$.

  Figure~\ref{fig:P2col} also indicates a $(3,2)$-coloring of
  $\mathrm{T2}$, a triangulation of the torus with $f$-vector $(1, 7,
  21, 14)$ (M\"obius' vertex minimal triangulation).  The chromatic
  numbers in this case are $\chs 1{\mathrm{T2}} = 7$ and $\chs
  2{\mathrm{T2}} = 3$.  $\mathrm{T2}$ has $7!=5040$ $(7,1)$-colorings
  and $84 \cdot 3! = 504$ $(3,2)$-colorings.
\end{exmp}

\begin{figure}[t]
  \centering

\begin{equation*}
 \xy 0;/r.11pc/:
 (10,0)*{\textcolor{red}{\blacksquare}}="A"+(-10,0)*{2};
 (60,-30)*{\textcolor{red}{\blacksquare}}="B"+(0,-10)*{1};
 (110,0)*{\textcolor{red}{\blacksquare}}="C"+(10,0)*{3};
 (10,40)*{\textcolor{red}{\blacksquare}}="D"+(-10,0)*{3};
 (40,40)*{\textcolor{blue}{\blacksquare}}="E"+(0,10)*{5};
 (80,40)*{\textcolor{green}{\blacksquare}}="F"+(0,10)*{6};
 (110,40)*{\textcolor{red}{\blacksquare}}="G"+(10,0)*{2};
 (60,0)*{\textcolor{blue}{\blacksquare}}="H"+(0,10)*{4};
 (60,70)*{\textcolor{red}{\blacksquare}}="I"+(0,10)*{1};
  "A"; "B" **\dir{-};
  "B"; "C" **\dir{-};
  "C"; "H" **\dir{-};
  "A"; "H" **\dir{-};
  "A"; "D" **\dir{-};
  "D"; "E" **\dir{-};
  "A"; "E" **\dir{-};
  "E"; "F" **\dir{-};
  "E"; "I" **\dir{-};
  "F"; "I" **\dir{-};
  "F"; "G" **\dir{-};
  "C"; "F" **\dir{-};
  "C"; "G" **\dir{-};
  "H"; "E" **\dir{-};
  "H"; "F" **\dir{-};
  "D"; "I" **\dir{-};
  "G"; "I" **\dir{-};
  "H"; "B" **\dir{-};
 \endxy
\qquad\qquad
\raisebox{-36pt}{
\xy 0;/r.18pc/:
  (0,60)*{\textcolor{red}{\blacksquare}}="A"+(0,7)*{1}; 
  (60,60)*{\textcolor{red}{\blacksquare}}="B"+(0,7)*{3}; 
  (20,0)*{\textcolor{blue}{\blacksquare}}="C"+(0,-7)*{4};
  (0,0)*{\textcolor{red}{\blacksquare}}="D"+(0,-7)*{1}; 
  (40,60)*{\textcolor{green}{\blacksquare}}="E"+(0,7)*{7}; 
  (60,0)*{\textcolor{red}{\blacksquare}}="F"+(0,-7)*{3};
  (20,60)*{\textcolor{blue}{\blacksquare}}="K"+(0,7)*{4}; 
  (40,0)*{\textcolor{green}{\blacksquare}}="L"+(0,-7)*{7};
"A"; "B" **\dir{-};
"D"; "F" **\dir{-};
"A"; "C" **\dir{-} \POS?(.32)*{\textcolor{red}{\blacksquare}}="I"+(-7,0)*{2}; 
"E"; "F" **\dir{-} \POS?(.68)*{\textcolor{red}{\blacksquare}}="G"+(7,0)*{2};
"D"; "E" **\dir{-} \POS?(.32)*{\textcolor{red}{\blacksquare}}="H"+(-7,0)*{3};
"B"; "C" **\dir{-} \POS?(.32)*{\textcolor{red}{\blacksquare}}="J"+(7,0)*{1};
"G"; "H" **\dir{-} \POS?(.5)*{\textcolor{blue}{\blacksquare}}="N"+(1,7)*{6};
"I"; "J" **\dir{-} \POS?(.5)*{\textcolor{blue}{\blacksquare}}="M"+(1,7)*{5};
"I"; "K" **\dir{-};
"K"; "L" **\dir{-};
"G"; "L" **\dir{-};
\endxy}
\end{equation*}
    \caption{ $(\{\textcolor{red}{\blacksquare},
      \textcolor{blue}{\blacksquare},
    \textcolor{green}{\blacksquare}\} ,2)$-colorings of $\mathrm{P2}$
    and $\mathrm{T2}$}
  \label{fig:P2col}
\end{figure}

\begin{rmk}
  The number of
  $s$-to-$1$ maps of $V=\{1,\ldots,m\}$ {\em onto\/} $P=\{1,\ldots,r\}$,
  $m \leq rs$, equals
  \begin{equation*}
     \sum 
    \begin{pmatrix}
      m \\ m_1,\ldots,m_r
    \end{pmatrix}
    \begin{pmatrix}
      r \\ r_1,\ldots,r_s
    \end{pmatrix}
  \end{equation*}
  where the sum is taken over all $r$-vectors $(m_1,\ldots,m_r)$ such
  that $s \geq m_1 \geq \cdots \geq m_r$, $m_1+\cdots+m_r=m$, and the
  $s$-vector $(r_1,\ldots,r_s)$ consists of the numbers $r_j=|\{ p \in
  P \mid m_p = j \}|$, $1 \leq j \leq s$.
\end{rmk}

\begin{exmp}
  Let $H=(V,E)$ be a $(s+1)$-uniform hypergraph with vertex set $V$
  and hyperedges $E$ \cite[Chp 1.3]{bondy_murty08}. We say that $H$ has
  property $B$ if there exists a red--blue coloring of the vertices
  with no monochrome hyperedges \cite{miller37}. Form the pure ASC
  $K(H)$ with vertex set $V$ and facets $E$. Then $K(H)$ is
  $(2,s)$-colorable if and only if $H$ has property $B$.
\end{exmp}

\subsection{Coloring flags}
\label{sec:flags}
We introduce the notion of an $s$-flagification procedure that relates
to $(P,s)$-colorings in the same way that flagification
(Definition~\ref{defn:1flag}) relates to $P$-colorings
(Proposition~\ref{prop:1flag}, Proposition~\ref{prop:1col}).

\begin{defn}\label{defn:sflag}
  The $s$-flagification of $K$ is the ASC
  \begin{equation*}
    \fla sK = \{ \sigma \in D[V] \mid \sk s{[D[\sigma]} \subset K \}
  \end{equation*}
  and $K$ is an $s$-flag complex if $K= \fla sK$.
\end{defn}

An ASC is a $1$-flag complex if and only it is a flag complex in the
sense of Definition~\ref{defn:1flag} and $\fla{}K = \fla 1K$.  The
$s$-flagification of $K$ is the largest complex on $V$ with the same
$s$-skeleton as $K$.

\begin{prop}
  The following conditions are equivalent:
  \begin{enumerate}
  \item $K$ is $s$-flag
  \item $\forall \sigma \in D[V] \colon \sk s{D[\sigma]} \subset K
    \Longrightarrow \sigma \in K$
  \item The missing faces of $K$ have dimension at most $s$
  \end{enumerate}
\end{prop}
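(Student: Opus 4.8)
The plan is to argue exactly as in the $s=1$ case, Proposition~\ref{prop:1flag}, using throughout that the missing faces of $K$ are by definition the minimal elements of the finite poset $D[V]-K$. I would first dispose of (1)$\iff$(2), which is essentially formal. The inclusion $K \subset \fla sK$ always holds, since for $\sigma \in K$ every face of $\sigma$ lies in $K$, in particular every face of dimension $\leq s$, so $\sk s{D[\sigma]} \subset K$. Hence ``$K$ is $s$-flag'', i.e.\ $K=\fla sK$, is equivalent to the reverse inclusion $\fla sK \subset K$, and unwinding the definition of $\fla sK$ this says precisely: whenever $\sigma \in D[V]$ satisfies $\sk s{D[\sigma]} \subset K$, then $\sigma \in K$. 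That is condition (2). (It is also worth recording in passing that $\fla sK$ really is a subcomplex --- closed under faces --- because $\rho \subset \sigma$ forces $\sk s{D[\rho]} \subset \sk s{D[\sigma]}$; this legitimizes the terminology ``$s$-flag complex'', though it is not strictly needed for the equivalence.)

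For (3)$\Rightarrow$(2): assume every missing face has dimension $\leq s$, and let $\sigma \in D[V]$ with $\sk s{D[\sigma]} \subset K$. If $\sigma \notin K$ then $\sigma \in D[V]-K$; since this poset is finite, there is a minimal element $\tau$ with $\tau \subset \sigma$, and $\tau$ is by definition a missing face, so $\dim\tau \leq s$. Then $\tau \in \sk s{D[\sigma]} \subset K$, contradicting $\tau \notin K$. Hence $\sigma \in K$, which is (2).

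For (2)$\Rightarrow$(3): let $\tau$ be a missing face and suppose, for contradiction, that $\dim\tau \geq s+1$, i.e.\ $|\tau| \geq s+2$. Any $\rho \subset \tau$ with $\dim\rho \leq s$ then has $|\rho| \leq s+1 < |\tau|$, so $\rho$ is a proper subset of $\tau$; by minimality of $\tau$ in $D[V]-K$ this gives $\rho \in K$. Therefore $\sk s{D[\tau]} \subset K$, and condition (2) forces $\tau \in K$, contradicting that $\tau$ is a missing face. Hence $\dim\tau \leq s$, which is (3). I do not anticipate a genuine obstacle here: the only points needing a moment's care are the existence of a minimal non-face below a given non-face (finiteness of $D[V]-K$) and the elementary observation that for $|\tau| \geq s+2$ every face of $\tau$ of dimension $\leq s$ is a proper subset.
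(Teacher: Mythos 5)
Your proof is correct and follows essentially the same approach as the paper, which merely records the contrapositive of (1)$\Rightarrow$(3) (a missing face of dimension $>s$ lies in $\fla sK$ but not in $K$) and leaves the rest implicit. You spell out all the implications carefully, including the routine unwinding of (1)$\iff$(2) and the minimality argument for (2)$\Rightarrow$(3), so there is no gap.
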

\begin{proof}
  If  $K$ has a missing face of dimension $s+1$, then that face must
  be added to form the $s$-flagification of $K$, so $K$ is not $s$-flag.
\end{proof}

For example, the triangulation $\mathrm{P2}$ of the real projective
plane from Example~\ref{exmp:P2} is $2$-flag, and, more generally, the
$j$-skeleton $\sk j{D[V]}$ of a full simplex is $(j+1)$-flag but not
$j$-flag.

Complexes of dimension less than $s$ are $s$-flag (but the converse
does not hold).  Any $s$-flag complex is an $(s+1)$-flag complex so
that
\begin{equation*}
  \fla 1{\text{ASC}} \subset \fla 2{\text{ASC}} \subset \cdots \subset
  \fla s{\text{ASC}} \subset \fla {s+1}{\text{ASC}} \subset \cdots  
\end{equation*}
where $\fla s{\text{ASC}}$ stands for the class of $s$-flag ASCs.  The
complex $\partial D[N_+]$ is $(N-1)$-dimensional so it is $N$-flag but
it is not $(N-1)$-flag.  The complexes $\mathrm{MB}$
(Figure~\ref{fig:MBcol}) and $\mathrm{P2}$ (Figure~\ref{fig:P2col})
are $2$-flag (but not $1$-flag).

The next proposition says that $(P,s)$-coloring issues are $s$-dimensional.

\begin{prop}\label{prop:flags}
  The following conditions are equivalent for any map \func fVP:
  \begin{enumerate}
  \item $f$ is a $(P,s)$-coloring of $K$
  \item $f$ is a $(P,s)$-coloring of $\sk sK$
  \item $f$ is a $(P,s)$-coloring of $\fla sK$
  \end{enumerate}
  Moreover, $\mathrm{Col}^s(K,P) = \mathrm{Col}^s(\sk sK,P) =
  \mathrm{Col}^s(\fla sK,P)$ and $\chs sK = \chs s{\sk sK} =
  \chs s{\fla sK}$.  
\end{prop}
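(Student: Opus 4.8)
\textbf{Proof plan for Proposition~\ref{prop:flags}.}
The plan is to follow the same skeleton as the proof of Proposition~\ref{prop:1col} for ordinary colorings, exploiting that, by definition, $f$ is a $(P,s)$-coloring of an ASC $L$ precisely when $L$ contains no monochrome $s$-dimensional simplex, i.e. $|f(\sigma)|>1$ for every $\sigma\in L$ with $\dim\sigma=s$. This reduces everything to controlling the $s$-dimensional faces. First I would record the chain of inclusions $\sk sK\subset K\subset\fla sK\subset D[V]$ and the key fact that $\sk s{\sk sK}=\sk sK=\sk s{\fla sK}$; the second equality is immediate from Definition~\ref{defn:sflag}, since any $\sigma\in D[V]$ with $\dim\sigma\le s$ satisfies $\sigma\in\fla sK\iff\sk s{D[\sigma]}=D[\sigma]\subset K\iff\sigma\in K$.

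Next I would prove the equivalence of the three coloring conditions. The implications $(3)\Rightarrow(1)\Rightarrow(2)$ are trivial from the restriction property (a $(P,s)$-coloring of a complex restricts to a $(P,s)$-coloring of any subcomplex), already noted in the text via $\chs sK\ge\chs s{K'}$ for subcomplexes. For $(2)\Rightarrow(3)$: suppose $f$ is a $(P,s)$-coloring of $\sk sK$, and let $\sigma\in\fla sK$ with $\dim\sigma=s$. Then $\sigma=\sk s{D[\sigma]}\subset K$, so $\sigma\in\sk sK$, hence $|f(\sigma)|>1$. Since $\fla sK$ has no monochrome $s$-simplex, $f$ is a $(P,s)$-coloring of $\fla sK$. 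This closes the cycle and gives $(1)\iff(2)\iff(3)$.

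For the displayed equalities of coloring sets, the equivalence of the three conditions says exactly that the subsets $\mathrm{Col}^s(K,P)$, $\mathrm{Col}^s(\sk sK,P)$, $\mathrm{Col}^s(\fla sK,P)$ of $\map{}VP$ all coincide, since a map $f\colon V\to P$ lies in one of them iff it lies in any of them. The equality of $s$-chromatic numbers is then immediate: $\chs sL$ depends only on which palettes $P$ admit an element of $\mathrm{Col}^s(L,P)$, and this data is the same for $L\in\{K,\sk sK,\fla sK\}$.

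I do not anticipate a genuine obstacle here; the only point requiring a little care is the direction $(2)\Rightarrow(3)$, where one must check that every $s$-dimensional face of the (possibly much larger) complex $\fla sK$ is already a face of $\sk sK$ — but this is exactly built into Definition~\ref{defn:sflag}, so the argument is a short diagram-free chase through the definitions, entirely parallel to the $s=1$ case in Proposition~\ref{prop:1col}.
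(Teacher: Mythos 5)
Your proof is correct and is the argument the paper takes to be self-evident: Proposition~\ref{prop:flags} is stated without a written proof, and your reduction to the \textquotedblleft no monochrome $s$-simplex\textquotedblright\ characterization together with the observation $\sk sK = \sk s{\fla sK}$ (from Definition~\ref{defn:sflag}) is exactly the implicit reasoning. One minor notational slip: in the step $(2)\Rightarrow(3)$ you write $\sigma = \sk s{D[\sigma]}$, but $\sk s{D[\sigma]}$ is the full complex $D[\sigma]$ (not the single face $\sigma$) when $\dim\sigma \le s$; the intended deduction is $D[\sigma]=\sk s{D[\sigma]}\subset K$, hence $\sigma\in K$, which is what you use.
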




\subsection{Proof of the main theorem}
\label{sec:VBoverDJ}
We are now ready to prove our main result.

\begin{proof}[Proof of Theorem~\ref{thm:main}]
  
  \noindent $\text{\eqref{thm:main1}} \Longrightarrow
  \text{\eqref{thm:main3}}$: By Theorem~\ref{thm:fsplit} and vector
  bundle theory \cite[Theorem 9.6]{husemoller94}, if $K$ admits a
  $(P,s)$-coloring then $\lambda_K$ is stably isomorphic to a sum of
  complex vector bundles of dimension at most $s$.

   \noindent $\text{\eqref{thm:main3}} \Longrightarrow
  \text{\eqref{thm:main5}}$: This is clear.

  \noindent $\text{\eqref{thm:main5}} \Longrightarrow
  \text{\eqref{thm:main1}}$: Since $R$ is a UFD, also the polynomial
  rings over $R$ are UFDs by Gauss' theorem.  Suppose that $\prod_{v
    \in V} (1+v) = \prod_{p \in P}c_p$ with $c_p \in \SR{K;R}$, $p \in
  P$, of degree at most $2s$. Recall from Section~\ref{sec:SR} that
  for any vertex vertex $v \in V$,
  \func{\SR{i_{\{v\}}}}{\SR{K;R}}{\SR{D[\{v\}];R} = R[v]} denotes the
  ring homo\m\ induced by the functor $\SR{-;R}$ applied to the
  inclusion \func{i_{\{v\}}}{D[\{v\}]}K of that vertex into $K$. Since
  the equation $1+v = \prod_{q \in P} \SR{i_{\{v\}}}c_q$ holds in the
  UFD $R[v]$ there is a unique $p \in P$ such that $1+v$ divides
  $\SR{i_{\{v\}}}c_p$.  Define \func fVP by
  \begin{equation*}
   \forall v \in V \forall p \in P \colon f(v)=p \iff 1+v \mid
   \SR{i_{\{v\}}}c_p  
  \end{equation*}
  Suppose that $\sigma$ is a simplex of $K$.
  The equation
  \begin{equation*}
    \prod_{v \in \sigma} 1+v = \prod_{p \in P} \SR{i_\sigma}c_p
  \end{equation*}
 holds in the UFD $R[\sigma]$. For each $v \in \sigma$, the prime
 element $1+v$ divides exactly one of the factors
 $\SR{i_\sigma}c_p$, $p \in P$. The only possibility is that $1+v$
 divides $\SR{i_\sigma}c_{f(v)}$. It follows that
 \begin{equation*}
    \forall p \in P \colon 
    \prod_{v \in \sigma \cap f^{-1}p} (1+v) \mid \SR{i_\sigma}c_p
 \end{equation*}
 and for degree reasons we must in fact have that
 \begin{equation*}
    \forall p \in P \colon 
    \prod_{v \in \sigma \cap f^{-1}p} (1+v) = \SR{i_\sigma}c_p
 \end{equation*}
 up to a unit in $R$.  Therefore $2|\sigma \cap f^{-1}p| =
 \deg(\SR{i_\sigma}c_p) \leq \deg(c_p) \leq 2s$ or $|\sigma \cap
 f^{-1}p| \leq s$. This means that \func fVP is a $(P,s)$-coloring of
 $K$.
%
%
%

 \noindent $\text{\eqref{thm:main1}} \iff \text{\eqref{thm:main4}}$:
 This is the special case of Theorem~\ref{thm:gen} with $L=D[P]$. Note
 that the proof of Theorem~\ref{thm:gen} relies only on items
 \eqref{thm:main1}--\ref{thm:main3} from Theorem~\ref{thm:main}.
\end{proof}


There is a version of Theorem~\ref{thm:main} that refers to $\xi_K$
rather than $\lambda_K$. Since only complexes $K$ with $\dim K < rs$
admit $(r,s)$-colorings \eqref{eq:chrs} it is no restriction to make
this assumption.  

\begin{cor}\label{cor:lKxK}
  Assume that $n(K) \leq rs$. Then $K$ admits an $(r,s)$-coloring if
  and only if there exists a map $\DaJ{K} \to \BU s^r$ such that the
  diagram
\begin{equation*}
  \xymatrix{
    {\DaJ{K}} \ar[dr]_{\xi_K \oplus \C^{rs-n(K)}} \ar[rr] &&  
    {\BU s^r} \ar[dl]^\oplus \\
    & {\BU{rs}}}
\end{equation*}
  is homotopy commutative.
\end{cor}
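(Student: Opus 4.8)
The plan is to reduce the statement to the equivalence of conditions \eqref{thm:main1}, \eqref{thm:main3} and \eqref{thm:main5} in Theorem~\ref{thm:main}, exploiting that $\xi_K$ differs from $\lambda_K$ only by a trivial summand. Two standard facts will be used freely: every short exact sequence of complex vector bundles over $\DaJ K$ splits, and $\BU{rs}$ classifies $rs$-dimensional complex vector bundles, so two such bundles are isomorphic as soon as their classifying maps $\DaJ K\to\BU{rs}$ are homotopic. Applying Whitney's formula to \eqref{eq:fconst} records the one identity we really need, namely $c(\xi_K)=c(\lambda_K)=\prod_{v\in V}(1+v)$ in $H^*(\DaJ K;\Z)=\SR{K;\Z}$ (Proposition~\ref{prop:HDJR} together with the definition \eqref{def:LK}). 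Since the corollary is a purely topological statement there is no loss in working with $R=\Z$.

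For the implication from colorings to maps, assume $f\colon V\to P$ is an $(r,s)$-coloring. Theorem~\ref{thm:fsplit} supplies bundles $\xi_p$ over $\DaJ K$ with $\dim\xi_p=d_f(p)\le s$, and splitting the exact sequence \eqref{eq:xisumpxi} gives $\bigoplus_{p\in P}\xi_p\cong\xi_K\oplus\C^{\sum_p d_f(p)-n(K)}$. Because $d_f(p)\le s$ we may pad each summand: $\eta_p:=\xi_p\oplus\C^{s-d_f(p)}$ is an honest $s$-dimensional bundle, and $\bigoplus_{p\in P}\eta_p\cong\xi_K\oplus\C^{rs-n(K)}$, since $\sum_p(s-d_f(p))+\bigl(\sum_p d_f(p)-n(K)\bigr)=rs-n(K)$. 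The classifying maps $g_p\colon\DaJ K\to\BU s$ of the $\eta_p$ assemble to $g=(g_p)_{p\in P}\colon\DaJ K\to\BU s^r$, and $\oplus\circ g$ classifies $\bigoplus_p\eta_p\cong\xi_K\oplus\C^{rs-n(K)}$; thus the triangle is homotopy commutative.

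Conversely, given $g=(g_p)_{p\in P}\colon\DaJ K\to\BU s^r$ with $\oplus\circ g$ homotopic to the classifying map of $\xi_K\oplus\C^{rs-n(K)}$, let $\eta_p$ be the $s$-dimensional bundle classified by $g_p$. Both $\bigoplus_p\eta_p$ and $\xi_K\oplus\C^{rs-n(K)}$ are $rs$-dimensional with homotopic classifying maps, hence isomorphic. Taking total Chern classes and using the identity above yields $\prod_{p\in P}c(\eta_p)=c(\xi_K)=\prod_{v\in V}(1+v)$ in $\SR{K;\Z}$, with each $c(\eta_p)$ of degree at most $2s$. This is precisely condition \eqref{thm:main5} of Theorem~\ref{thm:main} over the UFD $\Z$, so that theorem produces a $(P,s)$-coloring of $K$, completing the equivalence.

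The one place that requires care is the rank bookkeeping in the first direction: one must arrange the padding so that the codimension entering \eqref{eq:xisumpxi} together with the trivial ranks $s-d_f(p)$ sum to exactly $rs-n(K)$, which is what upgrades the stable relation between $\xi_K$ and $\bigoplus_p\xi_p$ to a genuine isomorphism of $rs$-dimensional bundles compatible with $\oplus\colon\BU s^r\to\BU{rs}$. Everything else is formal.
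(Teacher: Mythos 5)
Your proof is correct and follows essentially the same route as the paper's: for the coloring-to-map direction you use Theorem~\ref{thm:fsplit} and the split exact sequence \eqref{eq:xisumpxi}, pad each $\xi_p$ by $\C^{\,s-d_f(p)}$ to an $s$-dimensional $\eta_p$, and check that the trivial ranks add up to $rs-n(K)$; for the converse you appeal to Theorem~\ref{thm:main}. The only cosmetic deviation is that in the converse you route through condition~\eqref{thm:main5} via total Chern classes, whereas the paper's one-line ``by Theorem~\ref{thm:main}'' implicitly stabilizes to hit condition~\eqref{thm:main3} or~\eqref{thm:main4} directly; your write-up also makes explicit the rank bookkeeping that the paper's very terse proof leaves to the reader.
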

\begin{proof}
  If the diagram has a completion, then $K$ admits an $(r,s)$-coloring
  by Theorem~\ref{thm:main}. Conversely, if $K$ admits an
  $(r,s)$-coloring, by the short exact sequence \eqref{eq:xisumpxi}
  there are $r$ vector bundles $\xi_p$, $1 \leq p \leq r$, over $\DaJ
  K$ such that
  \begin{equation*}
    \bigoplus_{1 \leq p \leq r} \xi_p \cong \xi_K \oplus \C^{rs-n(K)}
  \end{equation*}
  where $\dim \xi_p = s$, $1 \leq p \leq r$. 
\end{proof}

\subsection{$(L,s)$-colorings of simplicial complexes}
\label{sec:general}

Let $K$ be an ASC with vertex set $V$ and $L$ an ASC with vertex set
$P$.

\begin{defn}
  An $(L,s)$-coloring of $K$ is a simplicial map \func fKL whose
  vertex map \func fVP is a $(P,s)$-coloring of $K$.
\end{defn}


A $(P,s)$-coloring is the same thing as a $(D[P],s)$-coloring.  Any
$(L,s)$-coloring is a $(P,s)$-coloring.  A $(P,s)$-coloring of $K$ is
a $(\partial D[P],s)$-coloring of $K$ if no simplex of $K$ uses the
full palette $P$. There are no
$(\partial[\{\textcolor{blue}{\blacksquare},
\textcolor{green}{\blacksquare}, \textcolor{red}{\blacksquare}\}]
,2)$-colorings of $\mathrm{P2}$ from Figure~\ref{fig:P2col} which
means that it is impossible to paint the vertices of $\mathrm{P2}$
from a palette of $3$ colors so that every facet has exactly $2$
colors.

Let
\begin{equation*}
  \mathrm{map}^s(V,P) = \{ \func fVP \mid \forall p \in P \colon
  |f^{-1}p| \leq s \}, \qquad
  \mathrm{map}^s(V,L) = \{ f \in \mathrm{map}^s(V,P) \mid f(V) \in L \}
\end{equation*}
be the set of at most $s$-to-$1$ maps from $V$ to $P$ and the subset
of those at most $s$-to-$1$ maps from $V$ to $P$ whose image is a
simplex of $L$. Then $\mathrm{map}^s(V,L)$ is the set of
$(L,s)$-colorings of the full simplex $D[V]$. More generally, let
$\mathrm{Col}^s(K,L)$ stand for the set of $(L,s)$-colorings of the
ASC $K$. Then
\begin{equation*}
  \mathrm{Col}^s(K,L) = 
  {\lim}(P(K)^{\mathrm{op}};\mathrm{map}^s(-,L)) =
  {\lim}(P(K)^{\mathrm{op}};\mathrm{Col}^s(D[-],L))
\end{equation*}
is the limit of the contravariant functor from $P(K)$ to sets that
takes any simplex $\sigma$ of $K$ to $\mathrm{map}^s(\sigma,L) =
\mathrm{Col}^s(D[\sigma],L)$.

For each $p \in P$ let $\nu_p$ denote the $s$-dimensional complex
vector bundle
\begin{equation}\label{eq:lambdap}
  \Map{}P{P-\{p\}}{\VU s}0 \to   \map{}P{\BU s}
\end{equation} 
classified by the evaluation map \func{\nu_p}{\map{}P{\BU s}}{\BU
  s} at $p$. Let $\nu_P = \bigoplus_{p \in P} \nu_p$ be the
$|P|s$-dimensional Whitney sum
\begin{equation*}
  \map{}P{\VU s} \to \map{}P{\BU s},
\end{equation*}
classified by the map $ \nu_P \colon \map{}P{\BU s} \xrightarrow{\prod
  \nu_p} \prod_{p \in P} \BU s \xrightarrow{\oplus} \BU{|P|s}$, of
these bundles.

\begin{lemma}\label{lemma:localLscol}
  There exists a map $f \in \mathrm{map}^s(V,L) =
  \mathrm{Col}^s(D[V],L)$ if and only if there exists a map
  \begin{equation*}
    \xymatrix{
      {\map{}V{\BU 1}} \ar[dr]_-{\oplus} \ar[rr]^-F &&
      {\DaJ{L;\BU s,*}} \ar[dl]^-{\oplus \circ \lambda_L} \\
      & {\mathrm{BU}} }
  \end{equation*}
  over $\mathrm{BU}$. If this is the case, then $F^*c(\nu_p) =
  \pm c(f^{-1}p)$ in $H^*(\map{}V{\BU 1};\Z) = \Z[V]$ for all $p \in P$.
\end{lemma}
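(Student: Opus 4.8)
The plan is to prove both directions of the equivalence by reducing the $(L,s)$-coloring condition on $D[V]$ to the existence of a homotopy-commutative triangle over $\mathrm{BU}$, using the already-established machinery for $D[P]$-valued colorings together with the functoriality of $\DaJ{-;A,B}$ in both variables.

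First I would handle the forward direction. Given $f \in \mathrm{map}^s(V,L)$, the underlying vertex map is a $(P,s)$-coloring of $D[V]$, so by Theorem~\ref{thm:fsplit} (or directly, since $D[V]$ is a full simplex and $\lambda_{D[V]}$ is just the canonical bundle over $\map{}V{\BU 1}$) the bundle $\lambda_{D[V]}$ is stably isomorphic to $\bigoplus_{p\in P}\xi_p$ with $\dim\xi_p = |f^{-1}p| \le s$ and $c(\xi_p) = c(f^{-1}p)$. Each $\xi_p$, padded out to dimension exactly $s$ by adding a trivial summand $\C^{s-|f^{-1}p|}$, is classified by a map $\map{}V{\BU 1} \to \BU s$; these assemble into a map $\map{}V{\BU 1} \to \map{}P{\BU s}$. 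The key extra point is that the image of $f$ lies in $L$, so that this map actually factors through the subspace $\DaJ{L;\BU s,*} \subseteq \map{}P{\BU s}$: I would verify this by checking on the poset level, noting that the simplex $f(V)\in L$ governs which evaluation coordinates are "allowed to be nontrivial" and matching this against Definition~\ref{defn:DJ} of the Davis--Januszkiewicz space. Composing with $\oplus$ then recovers the stable class of $\lambda_{D[V]} = \oplus\circ(\text{identity on }\map{}V{\BU 1})$, giving the homotopy-commutative triangle, and the Chern class identity $F^*c(\nu_p) = \pm c(f^{-1}p)$ falls out of $\xi_p$ being the pullback of $\nu_p$ under $F$ together with $c(\xi_p) = c(f^{-1}p)$.

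For the converse, suppose $F \colon \map{}V{\BU 1} \to \DaJ{L;\BU s,*}$ makes the triangle commute up to homotopy. Composing $F$ with the inclusion $\DaJ{L;\BU s,*}\hookrightarrow \DaJ{D[P];\BU s,*} = \map{}P{\BU s}$ gives a map whose triangle over $\mathrm{BU}$ still commutes, so by the $K=D[V]$, $L=D[P]$ case of Theorem~\ref{thm:main} (specifically $\eqref{thm:main4}\Rightarrow\eqref{thm:main1}$, which as noted relies only on items \eqref{thm:main1}--\eqref{thm:main5}) there is a $(P,s)$-coloring $g\colon V\to P$ of $D[V]$, i.e.\ an at most $s$-to-$1$ map. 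It remains to see one can arrange $g(V)\in L$. Here I would use that the factorization of $F$ through $\DaJ{L;\BU s,*}$ forces the "support" of the resulting splitting to lie over a single simplex of $L$: on cohomology, $F^*$ kills exactly the Stanley--Reisner ideal of $L$ (Proposition~\ref{prop:SRideal} applied to $\SR{L;R}$ with $R = \Z$, via Proposition~\ref{prop:HDJR}), so the classes $F^*c(\nu_p) = \pm c(g^{-1}p)$ satisfy $\prod_{p\notin g(V)}(\text{nontrivial part})$ vanishing relations that encode $g(V)\in L$. Tracking this carefully recovers $g\in\mathrm{map}^s(V,L)$, and the Chern class statement is then the same computation as in the forward direction.

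The main obstacle is the precise identification of $\DaJ{L;\BU s,*}$ as a subspace of $\map{}P{\BU s}$ and, correspondingly, of $\SR{L;\Z} = H^*(\DaJ{L;\BU s,*};\Z)$ as a quotient of $H^*(\map{}P{\BU s};\Z)$ in a way that makes "the classifying map factors through $\DaJ L$" equivalent to "the image simplex lies in $L$." Once this dictionary is set up cleanly --- essentially a $\BU s$-coefficient analogue of Proposition~\ref{prop:HDJR} and Proposition~\ref{prop:SRideal} --- both implications are formal consequences of the $D[P]$-case of Theorem~\ref{thm:main} plus the functoriality in Proposition~\ref{prop:DJfunctor}; the Chern class bookkeeping is routine given $c(\xi_p) = c(f^{-1}p)$ from Theorem~\ref{thm:fsplit}.
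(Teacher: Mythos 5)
Your overall strategy matches the paper's, and the forward direction is fine: constructing the map from bundles padded to dimension $s$ (or equivalently from the group homomorphisms $\U 1^{f^{-1}p}\to\U{|f^{-1}p|}\to\U s$, which is how the paper does it via Proposition~\ref{prop:DJfunctor}.\eqref{prop:DJfunctor4}) automatically lands in $\DaJ{L;\BU s,*}$ when $f$ is simplicial $D[V]\to L$, and Theorem~\ref{thm:fsplit} gives the Chern class identity.

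In the converse direction, however, the crux of the lemma is the claim that the resulting at-most-$s$-to-$1$ map $g$ has image in $L$, and your sketch does not actually close this step. Two concrete problems. First, your appeal to Proposition~\ref{prop:SRideal} is out of place: that proposition describes the kernel of $R[V]\to\SR{K;R}$ with $\BU 1$ coefficients, but here one needs the cohomology of $\DaJ{L;\BU s,*}$, which by Proposition~\ref{prop:HDJR} is the \emph{generalized} Stanley--Reisner algebra $\SR{L;H^*(\BU s),\Z}$, not a quotient of a polynomial ring on degree-$2$ generators. Second, the phrase ``vanishing relations that encode $g(V)\in L$'' is exactly where the argument is, and it is missing. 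The paper's proof is a genuine contradiction argument: assuming $f(V)\notin L$, one observes that $\prod_{p\in f(V)}\bigl(c(\nu_p)-1\bigr)$ dies under $\lambda_L^*$ (because $f(V)\setminus\tau\neq\emptyset$ for every $\tau\in L$), expands this as $\sum_{Q\subseteq f(V)}(-1)^{|f(V)|-|Q|}\prod_{p\in Q}c(\nu_p)$, pulls back to $\Z[V]$ to rewrite $\prod_{v\in V}(1+v)=\prod_{p\in f(V)}c_p$ as an alternating sum over proper subsets $Q\subsetneq f(V)$, and then derives a contradiction by comparing homogeneous components in the top degree $2|V|$. Without that degree-counting step, ``tracking this carefully recovers $g\in\mathrm{map}^s(V,L)$'' is an assertion, not an argument. (A minor point: citing $\eqref{thm:main4}\Rightarrow\eqref{thm:main1}$ risks circularity since that equivalence is \emph{proved} via Theorem~\ref{thm:gen}, which uses this lemma; the non-circular route is to pass to the cohomological identity $\prod(1+v)=\prod c_p$ and then invoke $\eqref{thm:main5}\Rightarrow\eqref{thm:main1}$, which is what the paper does.)
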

\begin{proof}
  Suppose that $f \in \mathrm{map}^s(V,L)$ is an $s$-to-$1$-map.  Pick
  any linear ordering on $V$ and use it to define Lie group homo\m s
  $\map{}{f^{-1}p}{\U 1} \to \U{|f^{-1}p|} \to \U s$. Apply the
  classifying space functor to get maps $\mu_p \colon
  \map{}{f^{-1}p}{\BU 1} \to \BU{|f^{-1}p|} \to \BU s$, $p \in P$. Set
  $\mu = \{\mu_p \mid p \in P\}$. The induced map \func{\DaJ{f;\mu}}
  {\map{}V{\BU 1}=\DaJ{D[V];\BU 1,*}} {\DaJ{L;\BU s,*}} from
  Proposition~\ref{prop:DJfunctor}.\eqref{prop:DJfunctor4} will make
  the diagram homotopy commutative.

  Conversely, given a map $F \colon \map{}V{\BU 1} \to \DaJ{L;\BU
    s,*}$ over $\mathrm{BU}$, we get a map $\map{}V{\BU 1} \to
  \map{}P{\BU s}$ over $\mathrm{BU}$ by composing with the inclusion
  map \func{\lambda_L}{\DaJ{L;\BU s,*}}{\map{}P{\BU s}} introduced
  immediately below Definition~\ref{defn:DJ}. Then
  \begin{equation*}
    \prod_{v \in V}(1+v) = \prod_{p \in P}c_p
  \end{equation*}
  with $c_p=F^*\lambda_L^*c(\nu_p)$, $p \in P$.
  According to (the proof of)
  Theorem~\ref{thm:main}.\eqref{thm:main1}--\eqref{thm:main3}, there
  is a map $f \in \mathrm{map}^s(V,P)$ so that
  \begin{equation*}
    \forall p \in P \colon \prod_{v \in f^{-1}p}(1+v) = \pm c_p 
  \end{equation*}
  in the polynomial ring $\Z[V] = H^*(\map{}V{\BU 1};\Z)$.  We claim
  that $f(V) = \{ p \in P \mid c_p \neq 1 \}$ is a simplex of
  $L$, or, equivalently, that $f \in \mathrm{map}^s(V,L)$.
  
  Assume that $f(V) \not\in L$.  According to \cite[Theorem
  3.10]{NR05} (see Proposition~\ref{prop:HDJR})
  \begin{equation*}
    H^*(\DaJ{L;\BU s,\ast};\Z) =
    \lim(P(L)^\mathrm{op}; \otimes\Map{}P{P-?}{H^*(\BU s;\Z)}{\Z})
  \end{equation*}
  The element $\prod_{p \in f(V)} (c(\nu_p)-1)$ is in the
  kernel of 
  \begin{equation*}
    H^*(\map{}P{\BU s};\Z) \xrightarrow{\lambda_L^*} 
    H^*(\DaJ{L;\BU s,\ast};\Z) \subset
    \prod_{\tau \in L}\otimes\Map{}P{P-\tau}{H^*(\BU
    s;\Z)}{\Z})
  \end{equation*}
  because $\tau - f(V) \neq \emptyset$ for any simplex $\tau$ of
  $L$. Thus
  \begin{equation*}
    \prod_{p \in f(V)} \lambda_L^*c(\nu_p) = 
    \sum_{p \in Q \subsetneq f(V)} (-1)^{|f(V)|-|Q|}\lambda_L^*c(\nu_p)
  \end{equation*}
  in $H^*(\DaJ{L;\BU s,\ast};\Z)$ and, in $H^*(\map{}V{\BU 1};\Z) = \Z[V]$,
  \begin{multline*}
    \prod_{v \in V}(1+v) = 
    \prod_{p \in f(V)} c_p =
    F^*\big(\prod_{p \in f(V)} \lambda_L^*c(\nu_p) \big)  = 
    F^*\big(\sum_{p \in Q \subsetneq f(V)}
    (-1)^{|f(V)|-|Q|}\lambda_L^*c(\nu_p) \big) \\ = 
    \sum_{p \in Q \subsetneq f(V)} (-1)^{|f(V)|-|Q|}c_p     =
    \sum_{Q \subsetneq f(V)} \pm (-1)^{|f(V)|-|Q|}  
    \prod_{v \in f^{-1}Q}(1+v)
  \end{multline*}
  which is a contradiction because these elements of $\Z[V]$ do not
  have the same homogeneous components in degree $2|V|$. Therefore we
  must have that $f(V) \in L$.
\end{proof}

\begin{thm}\label{thm:gen}
  The ASC $K$ is $(L,s)$-colorable if and only if there exists a map
  $\DaJ{K;\BU 1,*} \to \DaJ{L;\BU s,*}$ such that the diagram
  \begin{equation*}
    \xymatrix{
      {\DaJ{K;\BU 1,*}} \ar[dr]_{\oplus \circ \lambda_K} \ar[rr] &&
      {\DaJ{L;\BU s,*}} \ar[dl]^{\oplus \circ \lambda_L} \\
      & {\mathrm{BU}} }
  \end{equation*}
  commutes up to homotopy.
\end{thm}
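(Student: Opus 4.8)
The plan is to reduce Theorem~\ref{thm:gen} to the ``global = coherent choice of local'' principle already emphasized for $\mathrm{Col}^s$, combined with Lemma~\ref{lemma:localLscol}, which is precisely the local ($K=D[V]$) version of the statement. The left-hand side, the set of $(L,s)$-colorings of $K$, is by definition $\mathrm{Col}^s(K,L) = \lim(P(K)^{\mathrm{op}}; \mathrm{Col}^s(D[-],L))$. I want to realize the right-hand side — maps $\DaJ{K;\BU 1,*} \to \DaJ{L;\BU s,*}$ over $\mathrm{BU}$ — as (up to homotopy) a compatible family, indexed by the simplices $\sigma \in K$, of maps $\DaJ{D[\sigma];\BU 1,*} = \map{}{\sigma}{\BU 1} \to \DaJ{L;\BU s,*}$ over $\mathrm{BU}$. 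Since $\DaJ{K;\BU 1,*} = \colim_{\sigma \in K} \map{}V{V-\sigma}{\BU 1,*}$ and each inclusion $\map{}V{V-\sigma}{\BU 1,*} \hookrightarrow \map{}V{\emptyset}{\BU 1,*} = \map{}V{\BU 1}$ splits off a factor $\map{}{\sigma}{\BU 1}$ (the coordinates indexed by $\sigma$) up to homotopy, a map out of the colimit that is compatible over $\mathrm{BU}$ amounts to such a coherent family; and by Lemma~\ref{lemma:localLscol} each local piece exists iff there is a local $(L,s)$-coloring $\sigma \to P$ with image in $L$, i.e.\ an element of $\mathrm{Col}^s(D[\sigma],L)$.

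First I would set up the forward direction. Given an $(L,s)$-coloring $f\colon K \to L$, its vertex map $f\colon V \to P$ is an at most $s$-to-$1$ map with $f(\sigma) \in L$ for every $\sigma \in K$; choosing a linear order on $V$ I get, as in the proof of Lemma~\ref{lemma:localLscol}, Lie group homomorphisms $\map{}{f^{-1}p}{\U 1} \to \U{|f^{-1}p|} \to \U s$ and hence maps $\mu_p\colon \map{}{f^{-1}p}{\BU 1} \to \BU s$; applying Proposition~\ref{prop:DJfunctor}.\eqref{prop:DJfunctor4} to the simplicial map $f\colon K \to L$ with these $\mu = \{\mu_p\}$ produces $\DaJ{f;\mu}\colon \DaJ{K;\BU 1,*} \to \DaJ{L;\BU s,*}$, and one checks it covers the identity on $\mathrm{BU}$ because the Whitney sum of the $\mu_p$ factors through the stabilization $\U{|V|} \to \U\infty$. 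Restricting along $D[\sigma] \hookrightarrow K$ recovers the local picture of Lemma~\ref{lemma:localLscol}, so this is consistent.

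For the converse I would take a map $F\colon \DaJ{K;\BU 1,*} \to \DaJ{L;\BU s,*}$ over $\mathrm{BU}$ and compose with $\lambda_L\colon \DaJ{L;\BU s,*} \to \map{}P{\BU s}$ to land in $\map{}P{\BU s}$ over $\mathrm{BU}$; then, for each $\sigma \in K$, restricting $F$ to $\map{}{\sigma}{\BU 1} \simeq \map{}V{V-\sigma}{\BU 1,*} \subset \DaJ{K;\BU 1,*}$ and invoking Lemma~\ref{lemma:localLscol} (with $V$ there replaced by $\sigma$) yields an at most $s$-to-$1$ map $f_\sigma\colon \sigma \to P$ with $f_\sigma(\sigma) \in L$, and moreover $f_\sigma$ is determined on each vertex $v \in \sigma$ by which $c_p$ the prime $1+v$ divides in $\Z[v]$ — exactly as in the proof of Theorem~\ref{thm:main}.\eqref{thm:main5}$\Rightarrow$\eqref{thm:main1}. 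Because this divisibility criterion is computed vertex by vertex in $H^*(\DaJ K;\Z)$ and is intrinsic to $F$ (not to $\sigma$), the maps $f_\sigma$ agree on overlaps: they all restrict to the same map $f\colon V \to P$. This $f$ is then simultaneously an at most $s$-to-$1$ map with $f(\sigma) = f_\sigma(\sigma) \in L$ for every $\sigma \in K$, i.e.\ a simplicial map $K \to L$ whose vertex map is a $(P,s)$-coloring — an $(L,s)$-coloring of $K$.

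The main obstacle I anticipate is making the colimit-over-$P(K)$ argument rigorous at the space level rather than just on cohomology: one must know that a homotopy-commutative triangle over $\mathrm{BU}$ out of $\DaJ{K;\BU 1,*}$ really is detected by, and can be assembled from, its restrictions to the subspaces $\map{}V{V-\sigma}{\BU 1,*}$, which needs the colimit in Definition~\ref{defn:DJ} to behave like a homotopy colimit (true here by the remark after Proposition~\ref{prop:DJfunctor}, citing \cite[Lemma 2.7]{NR05}) together with the fact that $\map{}{-}{\BU s}$ and $\DaJ{L;\BU s,*}$ are connected and that obstructions to extending/gluing maps over $\mathrm{BU}$ vanish in the relevant range. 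In practice, since Lemma~\ref{lemma:localLscol} already does the hard homotopy-theoretic and Chern-class bookkeeping at the level of each simplex, the remaining work is the purely combinatorial observation that the locally extracted vertex maps $f_\sigma$ are forced to be restrictions of a single global $f$ — and that is exactly the content of $\mathrm{Col}^s(K,L) = \lim_{P(K)^{\mathrm{op}}} \mathrm{Col}^s(D[-],L)$.
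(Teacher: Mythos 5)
Your proof follows essentially the same route as the paper's: the forward direction constructs $\DaJ{f;\mu}$ via Proposition~\ref{prop:DJfunctor}.\eqref{prop:DJfunctor4} from Lie-group homomorphisms $\map{}{f^{-1}p}{\U 1}\to\U s$, and the converse restricts the given map to each $\DaJ{D[\sigma];\BU 1,*}=\map{}{\sigma}{\BU 1}$, applies Lemma~\ref{lemma:localLscol}, and assembles the resulting local colorings $f_\sigma$ through the limit identity $\mathrm{Col}^s(K,L)=\lim(P(K)^{\mathrm{op}};\mathrm{Col}^s(D[-],L))$. Your explicit check that the $f_\sigma$ agree on overlaps is a useful elaboration the paper leaves implicit, while your anticipated obstacle about gluing a map out of the homotopy colimit never actually arises, since the converse only restricts the given map $F$ and never needs to reassemble one.
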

\begin{proof}
  Suppose that \func fVP is an $(L,s)$-coloring of $K$. Pick any
  linear ordering on $V$ and use it to define Lie group homo\m s
  $\map{}{f^{-1}p}{\U 1} \to \U{|f^{-1}p|} \to \U s$. Apply the
  classifying space functor to get maps $\mu_p \colon
  \map{}{f^{-1}p}{\BU 1} \to \BU{|f^{-1}p|} \to \BU s$, $p \in P$. Set
  $\mu = \{\mu_p \mid p \in P\}$. The induced map
  \func{\DaJ{f;\mu}}{\DaJ{K;\BU 1,*}}{\DaJ{L;\BU s,*}} from
  Proposition~\ref{prop:DJfunctor}.\eqref{prop:DJfunctor4} will make
  the diagram homotopy commutative.

  Conversely,  given a map $\DaJ{K;\BU 1,*} \to
  \DaJ{L;\BU s,*}$ over $\mathrm{BU}$. Let $\sigma$ be a simplex of $K$
  and $f_{\sigma} \in \mathrm{map}^s(\sigma,L) =
  \mathrm{Col}^s(D[\sigma],L)$ the local coloring map that according
  to Lemma~\ref{lemma:localLscol} corresponds to the map
  \begin{equation*}
    \map{}{\sigma}{\BU 1} =
    \DaJ{D[\sigma];\BU 1,*} \xrightarrow{\DaJ{i_{\sigma}}}
    \DaJ{K;\BU 1,*} \to
    \DaJ{L;\BU s,*}
  \end{equation*}
  over $\mathrm{BU}$. Then $f=(f_{\sigma})_{\sigma \in K} \in
 \mathrm{Col}^s(K,L) =
 {\lim}(P(K)^{\mathrm{op}};\mathrm{Col}^s(D[\sigma],L))$ is an
 $(L,s)$-coloring of $K$.
\end{proof}

\section{Vertex colorings of polyhedra}
\label{sec:speculations}


Let $M$ be a simplicial manifold, or, more generally, a compact
polyhedron; see \cite[Definitions 2.31 and 2.33]{BP02} for definitions
of the terminology applied here.

\begin{defn}\label{defn:chrSd}
  $\chs s{M} = \max\{ \chs sK \mid \text{$K$ triangulates $M$}\}$ is
  the maximum of $\chs sK$ \eqref{defn:scol} over all finite
  triangulations $K$ of the polyhedron $M$.
\end{defn}

The chromatic number is an invariant of the the homeo\m\ type, but
not an invariant of the homotopy type as, for instance, triangulations
of the plane are very different from triangulations of a point.  The
assertion $\chs s{S^d} = r$ means that one needs at most $r$ colors to
color {\em any\/} simplicial  $d$-sphere in such a way that the $s$-dimensional
simplices are polychrome. For instance,
\begin{alignat*}{4}
  &\chs 2{S^1} = 1 \qquad & &\chs 1{S^1} = 3 \\
  &\chs 3{S^2} = 1 \qquad & &\chs 2{S^2} = 2 \qquad & &\chs 1{S^2} = 4 \\
  &\chs 4{S^3} = 1 \qquad & &\chs 3{S^3} \geq 2 \qquad & &\chs 2{S^3}
  \geq 4 \qquad & &\chs 1{S^3} = \infty
\end{alignat*}
Example~\ref{exmp:41coldisc} implies that $\chs 1{\Sigma} \geq 4$ for
all compact surfaces $\Sigma$ (and the exact values are, except for
the $2$-sphere and the Klein bottle, given by Heawood's inequality and
the Map color theorem \cite{ringel_youngs68}).  The $4$-color theorem
\cite[Chp 11]{bondy_murty08} says that $\chs 1{S^2}=4$. From a
$(4,1)$-coloring of $S^2$ we get a $(2,2)$-coloring by identifying
colors pairwise. Thus $\chs 2{S^2} \leq 2$ and the reverse inequality
is \eqref{eq:chrs}. The assertion about the chromatic numbers of $S^3$
are justified by the examples below.

\begin{exmp}
  The simplicial $3$-sphere $\partial D[3_+] \ast \partial D[1_+]$ is
  $(3,2)$-colorable because it has $6$ vertices \eqref{eq:chrs}.
  $\partial D[3_+] \ast \partial D[1_+]$ is a $3$-flag complex.
\end{exmp}

\begin{exmp}\label{exmp:altschuler}
  A computer verification shows that Altschuler's \lq peculiar\rq\
  simplicial $3$-sphere $\mathrm{ALT}$ \cite{ALT76}, with $f$-vector
  $( 1, 10, 45, 70, 35 )$, is a $2$-flag $(4,2)$-colorable ASC with
  coloring $[1, 1, 2, 2, 1, 2, 3, 3, 4, 4]$. The $2$-chromatic number
  is $\chs 2{\mathrm{ALT}} = 4$.
\end{exmp}

\begin{exmp}\label{exmp:MW}
  A computer verification shows that Mani and Walkup's simplicial
  $3$-spheres $C$ and $D$ \cite{MW80}, with $f$-vectors
  $(1,20,126,212,106)$ and $(1,16,106,180,90)$, are $2$-flag complexes
  with $2$-chromatic numbers $\chs 2C=3$ and $\chs 2D=4$.
\end{exmp}

\begin{exmp}
  Klee and Kleinschmidt constructed a simplicial $3$-sphere
  $\mathrm{KK}$ with $f$-vector $(1,16,106,180,90)$ that is not
  vertex-decomposable and not polytopal \cite[Table 1]{kleeklein87}.
  This $3$-spherical complex is $2$-flag, $\chs 2 {\mathrm{KK}} = 4$
  and $[ 1, 3, 1, 3, 2, 2, 2, 2, 4, 4, 1, 1, 3, 2, 2, 4 ]$ is a
  $(4,2)$-coloring.
\end{exmp}

These examples show that $\chs 2{S^3} \geq 4$. All these simplicial
$3$-spheres are $(2,3)$-colorable so that $\chs 3{S^3} \geq 2$.  We do
not know if $\chs 2{S^3}$ or $\chs 3{S^3}$ are finite numbers.

\begin{exmp}\label{exmp:lutz}
  The non-constructible simplicial $3$-spheres $S^3_{17,74}$ and
  $S^3_{13,65}$ with $f$-vectors $(1, 17, 91, 148, 74 )$ and $(1, 13,
  69, 112, 56)$ found by Lutz \cite{lutz04} are
  $(3,2)$-colorable and $\chs 2{S^3_{17,74}} = 3 = \chs
  2{S^3_{13,65}}$.  The non-constructible simplicial $3$-sphere with
  $f$-vector $f=(1,381,2309,3856,1928)$ containing a knotted triangle
  \cite{lickorish91,hachimori_ziegler2000} is $(4,2)$-colorable.
  The reduction modulo $4$ map $\{1,2,\ldots,4m\} \to \{0,1,2,3\}$ is
  a $(4,2)$-coloring of the centrally symmetric simplicial $3$-spheres
  $CS^3_{4m}$, $m \geq 2$, from \cite[Theorem 6]{lutz-2004}.
  The triangulations $^3_{nn}10^{di}_{1}$ and $^3_{nn}12^{cy}_{1}$
  from \cite[Table 2]{lutz-2004} are $(3,2)$-colorable,
  $^3_{nn}14^{cy}_{1}$ is $(4,2)$-colorable, and
  ${}^3_{nn}16^{cy}_{k}$, $1 \leq k \leq 5$, are $(4,2)$-colorable by
  the map $v \to v \bmod 4$, $1 \leq v \leq 16$.

  $[1, 1, 1, 2, 2, 3, 3, 1, 3, 3, 4, 2, 2, 4, 4, 1]$ is a
  $(4,2)$-coloring of the $2$-flag triangulation, $\mathrm{BLP}$, of
  the Poincare homology $3$-sphere with $f=( 1, 16, 106, 180, 90 )$
  constructed by Bj\"orner and Lutz \cite{bjorner-lutz00} and $\chs
  2{\mathrm{BLP}}=4$.
\end{exmp}

\begin{exmp}
  $\partial D[(2n)_+]$ is a triangulation of $S^{2n-1}$, $n
  \geq 1$,
  with $n$-chromatic number $\chs n{\partial D[(2n)_+]} = 3$. The map
  \func{f}{\{0,1,\ldots,2n\}}{\{1,2,3\}} given by
  \begin{equation*}
    f(j) =
    \begin{cases}
      1 & 0 \leq j < n \\
      2 & n \leq j < 2n \\
      3 & j = 2n
    \end{cases}
  \end{equation*}
  is a $(3,n)$-coloring.  Similarly, the ASC $\partial D[(2n+1)_+]$ is
  a triangulation of $S^{2n}$, $n \geq 1$, with $n$-chromatic number
  \begin{equation*}
    \chs n{\partial D[(2n+1)_+]}  =
    \begin{cases}
      4 & n= 1 \\ 3 & n>1
    \end{cases}
  \end{equation*}
  and the map \func{f}{\{0,1,\ldots,2n+1\}}{\{1,2,3\}} given by
  \begin{equation*}
    f(j) =
    \begin{cases}
      1 & 0 \leq j < n \\
      2 & n \leq j < 2n \\
      3 & j = 2n,2n+1
    \end{cases}
  \end{equation*}
  is a $(3,n)$-coloring for $n>1$.
\end{exmp}

We now define the spherical complexes associated to cyclic
$n$-polytopes.

\begin{defn}
  $\mathrm{CP}(m,n)$, $m>n$, is the $(n-1)$-dimensional ASC on the
  ordered set $V=\{1,\ldots,m\}$ with the following facets: An
  $n$-subset $\sigma$ of $V$ is a facet if and only if between any two
  elements of $V-\sigma$ there is an even number of vertices
  in $\sigma$.
\end{defn}

By Gale's Evenness Theorem \cite{gale63}, the ASC $\mathrm{CP}(m,n)$
triangulates the boundary of the cyclic $n$-polytope on $m$ vertices.
Thus $\mathrm{CP}(m,n)$ is a simplicial $(n-1)$-sphere on $m$ vertices
and it is $\lfloor n/2 \rfloor$-neighborly in the sense that
$\mathrm{CP}(m,n)$ has the same $s$-skeleton as the full simplex on
its vertex set when $s < \lfloor n/2 \rfloor$
(Proposition~\ref{prop:flags}) \cite[Definition~1.15, Example~1.17,
Remark after Theorem~6.33]{BP02}. When $s < \lfloor n/2 \rfloor$, the
$s$-chromatic number of the cyclic polytope $\mathrm{CP}(m,n)$, $\chs
s{\mathrm{CP}(m,n)} = \chs s{D[V]} = \lceil m/s \rceil$, grows with
the number of vertices $m$. The first interesting chromatic numbers
for the cyclic polytopes are $\chs {n}{\mathrm{CP}(m,2n)}$ and $\chs
{n}{\mathrm{CP}(m,2n+1)}$ and the first first interesting chromatic
numbers for the spheres \eqref{defn:chrSd} are $\chs n{S^{2n-1}}$ and
$\chs n{S^{2n}}$.



\begin{prop} For $n \geq 1$,
  \begin{equation*}
      \chs n{\mathrm{CP}(m,2n)} =
    \begin{cases}
      2 & \text{$m>2n$ even} \\
      3 & \text{$m>2n$  odd}
    \end{cases} \qquad \text{and} \qquad
     \chs n{\mathrm{CP}(m,2n+1)} =
     \begin{cases}
       4 & n=1, \quad m>3 \\
       3 & n>1, \quad m> 2n+1
     \end{cases}
\end{equation*}
\end{prop}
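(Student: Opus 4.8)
The plan is to combine an elementary facet description of $\mathrm{CP}(m,d)$ with the a priori bounds \eqref{eq:chrs} and three explicit colorings. First I would reformulate the defining condition: a $d$-subset $\sigma$ of $V=\{1,\dots,m\}$ is a facet of $\mathrm{CP}(m,d)$ if and only if every maximal block of consecutive integers in $\sigma$ that meets neither $1$ nor $m$ (an interior block) has even cardinality; this is immediate from the definition of $\mathrm{CP}(m,d)$, since the two elements of $V-\sigma$ flanking such a block leave exactly that block between them, while any two elements of $V-\sigma$ are separated by a disjoint union of interior blocks. Two consequences I would record: for $d=2n$ each of the $m$ cyclic intervals $F_j=\{j,j+1,\dots,j+2n-1\}$, indices read modulo $m$ in $\{1,\dots,m\}$, is a facet of $\mathrm{CP}(m,2n)$, being one block or a pair of blocks, none interior; and every facet of $\mathrm{CP}(m,2n+1)$ meets $\{1,m\}$, since otherwise all its blocks would be interior, hence of even size, summing to the odd number $2n+1$. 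Also \eqref{eq:chrs} gives $\chs n{\mathrm{CP}(m,2n)}\ge 2$ and $\chs n{\mathrm{CP}(m,2n+1)}\ge\lceil(2n+1)/n\rceil=3$, so what remains are the upper bounds in all cases, the lower bound $\chs n{\mathrm{CP}(m,2n)}\ge 3$ for odd $m$, and the lower bound $\chs 1{\mathrm{CP}(m,3)}\ge 4$.

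For the upper bounds I would exploit that a block of even size contains equally many odd and even integers while a block of odd size has parity imbalance exactly $1$, and that in a facet the interior blocks are even, so the only imbalance comes from the at most two blocks meeting $\{1,m\}$, whose sizes sum to $d$ minus an even number. For $\mathrm{CP}(m,2n)$ with $m$ even, the parity coloring $v\mapsto v\bmod 2$ then puts exactly $n$ vertices of each colour in every facet (the interior blocks are balanced and the imbalances of the blocks meeting $\{1,m\}$ cancel, one such block beginning at $1$ and the other ending at the even number $m$), so it is a $(2,n)$-coloring and $\chs n{\mathrm{CP}(m,2n)}=2$. For $\mathrm{CP}(m,2n)$ with $m$ odd I would colour $1,\dots,m-1$ by parity and $m$ by a third colour: a facet avoiding $m$ then has only even blocks, hence $n$ vertices of each of the first two colours, while a facet containing $m$ loses that one vertex and, since $m-1$ is even, a short parity count splits its remaining $2n-1$ vertices $n$-to-$(n-1)$ between the first two colours; so this is a $(3,n)$-coloring and $\chs n{\mathrm{CP}(m,2n)}=3$. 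For $\mathrm{CP}(m,2n+1)$ with $n\ge 2$ I would colour $2,\dots,m-1$ by parity and paint both $1$ and $m$ with a third colour: a facet contains one or two of these two vertices --- at most $n$ of the third colour, as $n\ge 2$ --- and on its remaining vertices the parity imbalance is at most $1$ (the interior blocks are balanced and at most one of the two truncated boundary blocks has odd size), so each of the first two colours appears at most $n$ times; hence $\chs n{\mathrm{CP}(m,2n+1)}=3$ for $n>1$.

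For the lower bound $\chs n{\mathrm{CP}(m,2n)}\ge 3$ with $m$ odd, suppose $g$ were a $(2,n)$-coloring; every facet, having size $2n$ and no colour used more than $n$ times, then carries exactly $n$ vertices of each colour. Applying this to $F_j$ and $F_{j+1}$, which share the $2n-1$ vertices $\{j+1,\dots,j+2n-1\}$, forces $g(j)=g(j+2n\bmod m)$ for every $j$, so $g$ is constant on the orbits of translation by $2n$ in $\Z/m$, that is, on the $d$ cosets of the subgroup generated by $d=\gcd(2n,m)$; since $m$ is odd, $d=\gcd(n,m)$, and $d$ is odd. As $d\mid 2n$, each $F_j$, being $2n$ consecutive residues, meets every coset in exactly $2n/d$ vertices, so if $\rho$ cosets receive the first colour then each facet has $\rho\cdot 2n/d=n$ vertices of that colour, forcing $\rho=d/2$, which is impossible. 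Hence $\chs n{\mathrm{CP}(m,2n)}=3$ for odd $m$. Finally, for $n=1$ the complex $\mathrm{CP}(m,3)$ triangulates $S^2$ and is therefore $4$-colorable by the four colour theorem, while its four vertices $1,2,3,m$ span a complete graph $K_4$ in the $1$-skeleton (for $m\ge 5$ the edges $\{1,2\},\{1,3\},\{2,3\}$ lie in the facet $\{1,2,3\}$, the edges $\{1,m\},\{2,m\}$ in $\{1,2,m\}$, and $\{3,m\}$ in $\{2,3,m\}$, and for $m=4$ the whole complex is the boundary of a $3$-simplex); so $\chs 1{\mathrm{CP}(m,3)}=4$.

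The substance of the argument is the elementary facet/block description together with the parity bookkeeping behind the three colorings and the short orbit count in the odd-$m$ case; the $n=1$ subcase, which leans on the four colour theorem, is where I expect to need the most care with small values of $m$.
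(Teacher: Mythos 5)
Your argument is correct and, for the upper bounds and the $\mathrm{CP}(m,3)$ case, follows the same plan as the paper (Gale's evenness description, the a priori bound \eqref{eq:chrs}, explicit parity colorings, and a $K_4$ in the $1$-skeleton of $\mathrm{CP}(m,3)$). But your proof is in fact more complete: the paper records $\chs n{\mathrm{CP}(m,2n)}\geq 2$ from \eqref{eq:chrs} and exhibits a $(3,n)$-coloring for odd $m$, yet never actually proves the lower bound $\chs n{\mathrm{CP}(m,2n)}\geq 3$ when $m$ is odd, which is needed for the stated equality. Your orbit argument supplies exactly this missing piece: in a hypothetical $(2,n)$-coloring every facet of size $2n$ must carry exactly $n$ vertices of each colour, so the overlapping cyclic facets $F_j$ and $F_{j+1}$ force $g(j)=g(j+2n)$ and hence $g$ descends to the $d=\gcd(2n,m)$ cosets of $\langle 2n\rangle$ in $\Z/m$; the constraint $\rho\cdot(2n/d)=n$ then has no integer solution because $d$ divides the odd number $m$. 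A marginally shorter route to the same contradiction is to note that each vertex lies in exactly $2n$ of the $m$ cyclic facets $F_j$, so $\sum_j\lvert F_j\cap g^{-1}(1)\rvert=2n\cdot\lvert g^{-1}(1)\rvert$ must equal $mn$, forcing $\lvert g^{-1}(1)\rvert=m/2\notin\Z$. Your spelled-out $K_4$ verification also tightens the paper's appeal to $\mathrm{CP}(4,3)=\partial D[3_+]$, which is not literally a subcomplex of $\mathrm{CP}(m,3)$ for $m>4$; what matters, and what you check, is that the $1$-skeleton contains a complete graph on $\{1,2,3,m\}$.

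One cosmetic slip: a cyclic interval $F_j=\{j,\dots,j+2n-1\}$ that neither wraps around nor touches $1$ or $m$ \emph{is} a single interior block; it is a facet not because it avoids interior blocks but because that interior block has even size $2n$. This does not affect any conclusion, and the parity bookkeeping behind your three colorings (cancelling imbalances of the two boundary blocks when $m$ is even, and the parity constraint forcing the remaining imbalance to be exactly $0$ or $1$ in the other cases) is sound.
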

\begin{proof}
  For general reasons \eqref{eq:chrs}, $\chs n{\mathrm{CP}(m,2n)} \geq
  2$ and $\chs n{\mathrm{CP}(m,2n+1)} \geq 3$. 

  The reduction modulo $2$ map $ \{1,\ldots, m\} \to \{0,1\}$ is a
  $(2,n)$ coloring of $\mathrm{CP}(m,2n)$, $n \geq 1$, for even $m>2n$
  and also for odd $m>2n$ if we modify this map so that it
  assume the value $2$ at vertex $m$.

  The map $f \colon \{1,\ldots, m\} \to \{0,1,2,3\}$ given by
  \begin{equation*}
    f(j) =
    \begin{cases}
      2 & j=1 \\
      j \bmod 2 & 1<j<m \\
      3 & j=m
    \end{cases}
  \end{equation*}
  is a $(4,1)$-coloring of the simplicial $2$-sphere
  $\mathrm{CP}(m,3)$, $m>3$. There is no $(4,1)$-coloring because
  there is no  $(4,1)$-coloring of $\mathrm{CP}(4,3)=\partial D[3_+]$. 

  The maps $f \colon \{1,\ldots, m\} \to \{0,1,2\}$ given by
  \begin{equation*}
    f(j) =
    \begin{cases}
      2 & j = 1,2,m \\
      j \bmod 2 & n < j < m
    \end{cases} \qquad 
    f(j) =
    \begin{cases}
      2 & j = 1,m \\
      j \bmod 2 & n < j < m
    \end{cases}
  \end{equation*}
  when $n$ is odd or $n$ is even, respectively, are $(3,n)$-colorings
  of $\mathrm{CP}(m,2n+1)$, $n >1$, $m> 2n+1$.
\end{proof}





There is no example contradicting the statement that $\chs n{S^{2n}} =
4$ for all $n \geq 1$ and $\chs n{S^{2n-1}} = 4$ for all $n \geq 2$.

\providecommand{\bysame}{\leavevmode\hbox to3em{\hrulefill}\thinspace}
\providecommand{\MR}{\relax\ifhmode\unskip\space\fi MR }
\providecommand{\MRhref}[2]{%
  \href{http://www.ams.org/mathscinet-getitem?mr=#1}{#2}
}
\providecommand{\href}[2]{#2}

\end{document}